\documentclass[12pt,a4paper]{article}
\usepackage[english]{babel}
\usepackage{authblk}
\usepackage{amsmath,amssymb,amsfonts,amsthm,bm}
\usepackage{geometry}
\usepackage{booktabs}
\usepackage{float}
\usepackage{graphicx}
\usepackage{subcaption}
\usepackage{placeins}
\usepackage{multirow,multicol}
\usepackage{natbib}
\usepackage[colorlinks, citecolor=blue, linkcolor=black]{hyperref}
\numberwithin{equation}{section}
\allowdisplaybreaks
\newtheorem{theorem}{Theorem}[section]
\newtheorem{lemma}{Lemma}[section]

\newtheorem{assumption}{Assumption}[section]
\newtheorem{remark}{Remark}[section]

\newcommand{\Var}{\operatorname{Var}}

\title{Error analysis and optimal sample allocation for randomized unbiased estimation: the single term estimator}
\author{Jiayu He}
\author{Chao Zheng \thanks{Corresponding author: chao.zheng12@gmail.com.}}
\affil{School of Data Sciences, Zhejiang University of Finance and Economics, Hangzhou, China}

\begin{document}
\maketitle
\begin{abstract}
Randomized unbiased estimation can be highly efficient for approximating expectations of functionals of stochastic differential equations. However, in practice, the optimal computational efficiency is rarely attained due to errors arising from parameter estimation and truncation. The effects of these errors on computational efficiency have not been investigated. In this article, we study the error analysis and optimal sample allocation for the single term estimator. Specifically, under mild assumptions on the estimated parameters, we develop a unified error analysis that incorporates both sources of error, and establish the relevant convergence rates for the computational efficiency loss. Based on this analysis, we further investigate the optimal allocation of the computational budget for parameter estimation, and propose several efficient sample allocation strategies. Numerical experiments support the theoretical convergence rates and demonstrate the effectiveness of our proposed methods.

\end{abstract}

\textbf{Keywords:} the single term estimator, randomized unbiased estimators, stochastic differential equations, error analysis, optimal sample allocation

\textbf{AMS subject classifications (2000):} 60H35, 65C30, 90C34, 91G60

	\section{Introduction}
	
	Monte Carlo methods are fundamental tools for estimating expectations of complex random quantities and have found widespread applications in financial engineering, uncertainty quantification, stochastic control, computational physics and mathematical biology. In many of these applications, the quantities of interest are functionals of solutions to stochastic differential equations (SDEs). When solutions of SDEs can be generated exactly, the standard Monte Carlo estimator is unbiased, and achieves the canonical root-mean-square convergence rate \(O(c^{-1/2})\), where \(c\) is the computational budget. However, exact simulation is generally difficult for SDEs, and time-discrete schemes, such as those in \citep{kloeden1992numerical}, may be used, resulting in biased approximations. According to \citet{duffie1995efficient}, even with an optimal choice of the discretization step size and the number of samples, the root-mean-square error converges at the rate \(O(c^{-r/(2r+1)})\), where \(r\) is the weak convergence rate of the underlying time-discrete scheme. Building on the multilevel Monte Carlo method \citep{giles2008multilevel} and the randomized debiasing idea in \citet{mcleish2011general}, \citet{rhee2015unbiased} made a breakthrough by proposing several randomized unbiased estimators, that eliminate the discretization bias while retaining the canonical square-root convergence rate \(O(c^{-1/2})\).

Since the work of \citet{rhee2015unbiased}, randomized unbiased estimation has been developed and applied in a variety of settings. In the context of SDEs, for example, \citet{andersson2017} considered unbiased estimation using parametrix expansions. \citet{kang2019unbiased} developed unbiased sensitivity estimation of one-dimensional diffusion processes. \citet{vihola2018unbiased} proposed a general class of unbiased estimators, that include those of \citet{rhee2015unbiased} as special cases, as well as several low variance estimators. For Markov chain applications, \citet{glynn2014exact} extended the application to unbiased estimation of expectations of Markov chain equilibrium distributions, and this work was further developed by \citet{jacob2020} and \citet{kahale2024unbiased}, who proposed unbiased Markov chain Monte Carlo methods, and unbiased time-average estimation for Markov chains, respectively.  Another line of research concerns the construction of unbiased estimators for functions of expectations of random variables; see, for example,  \citet{blanchet2015unbiased}, \citet{blanchet2015} and \citet{chopin2025towards}. Regarding asymptotic properties, \citet{zheng2017clt} and \citet{zheng2016rates} established central limit theorems for infinitely stratified estimators and for various debiased Multilevel Monte Carlo algorithms, respectively. Recently, double randomization techniques were developed to address settings involving an additional source of bias that also needs to be eliminated. For instance, \citet{jasra2022unbiased} applied this approach to filtering of partially observed diffusions,  \citet{ruzayqat2023unbiased} employed it to achieved unbiased estimation for a class of diffusion processes, and \citet{heng2023unbiased} developed a general framework for unbiased estimation in discretized probability models.

It is well known that the computational efficiency of unbiased estimators may depend heavily on the choice of probability distributions used for randomization. Despite the popularity of unbiased estimation, the problem concerning the optimal distribution remains largely unexplored. In the context of SDEs, the optimal distribution and the associated algorithms have been investigated by  \citet{rhee2015unbiased}, \citet{cui2021optimal} and \citet{zheng2026optimal}. However, the error arising from the calculation of the optimal distribution has received little attention. To the best of our knowledge, only \citet{zheng2026optimal} have a preliminary discussion on this issue for sum estimators.

In this article, we focus on the single term estimator in the context of SDEs, which is widely used due to its simple structure and easy implementation. However, in practice, the optimal computational efficiency is rarely attained, due to errors arising from parameter estimation and truncation when calculating the optimal distribution. Specifically, the parameters entering the optimal distribution may be estimated by preliminary Monte Carlo simulation, leading to a parameter estimation error. Moreover, since infinitely many parameters are involved, only a finite number of them can be estimated directly, while the remaining tail may be approximated using its asymptotic behavior, thereby introducing a truncation error. These two sources of error jointly determine the computational efficiency loss of the single term estimator. This raises two ‌crucial‌ questions: how does the overall computational efficiency loss depend on the parameter estimation accuracy and the truncation level, and under a fixed computational budget for parameter estimation, how should samples be allocated across different levels to minimize this loss? 

To address these questions, we first develop a probabilistic model for the estimated parameters based on the central limit theorem while preserving their positivity. We then establish a unified error analysis for the computational efficiency loss with respect to both the Monte Carlo sample size and the truncation level. Since the underlying problem is infinite-dimensional, we first analyse an \(m\)-truncated problem and then extend the analysis to the infinite-dimensional setting under a geometric tail assumption. In particular, we derive the convergence rate with respective to the sample size and the truncation level, see Theorem \ref{thm4.4}. Based on this result, we concerntrate on the leading-order term of the parameter estimation error, and propose several sample allocation methods. Numerical experiments show that under a fixed computation budget in prior estimation, the proposed non-uniform allocation methods can substantially reduce the efficiency loss compared with uniform allocation.

Three features of our results are worth highlighting. First, the total error is decomposed into two parts: one arising solely from truncation and the other dominated by the parameter estimation error. Second, the decay rate of the parameters alone is insufficient to determine the convergence rate with respect to the truncation level. The convergence rate also depends on the decay of the non-leading-order term, resulting in a higher convergence rate than would be predicted from the parameter decay alone. Third, among the allocation strategies considered in our numerical experiments, the asymptotic-allocation method is one of the most efficient while requiring only a simple allocation rule, making it attractive for practical implementation. 

	The remainder of this article is organized as follows. Section 2 formulates the problem and introduces assumptions for the estimated parameters. Section 3 establishes the error analysis for the \(m\)-truncated problem. Section 4 extends the analysis to the infinite-dimensional setting and establishes the corresponding convergence rates. Section 5 develops non-uniform sample allocation methods. Section 6 presents numerical experiments, and Section 7 concludes the article.
	
	\section{Problem Formulation}
	
Let $(X(t): t\geq 0)$ be the unique solution to the SDE 
\[
dX(t)=\mu(X(t))dt+\sigma(X(t))dB(t),
\]
where $\mu: \mathbb{R}^d\rightarrow \mathbb{R}^d$, $\sigma: \mathbb{R}^d\rightarrow \mathbb{R}^{d\times l}$ and $(B(t): t\geq 0)$ is an $l$-dimensional Brownian motion. In many applications, one needs to calculate the expectation $E[f(X)]$, where $f$ is a functional of $X$. 

Let $(Y_n: n\geq 0)$ be a sequence of approximations of $Y:=f(X)\in L^2$, such that $E[(Y_n-Y)^2]\rightarrow 0$ as $n\rightarrow \infty$, and  \(\Delta_n = Y_n - Y_{n-1}\) is the increment with \(Y_{-1}=0\). \citet{rhee2015unbiased} proposed the single term estimator
\begin{equation*}
	\bar{Z} = \frac{\Delta_N}{p_N},
\end{equation*}
where \(N\) is a nonnegative integer-valued random variable independent of $(Y_n: n\geq 0)$, and \(p_n = P(N=n)\) is a probability mass function. This estimator is unbiased, i.e., \(E(\bar{Z}) = E(Y)\), provided that \(\sum_{n=0}^{\infty} E(\Delta_n^2)/p_n < \infty\). The efficiency of the single term estimator can be measured by the work-variance product, which is expressed as
	\begin{align}
		\min_p g(p)&=\left(\sum_{n=0}^{\infty}\frac{\beta_n}{p_n} - \alpha^2\right)\left(\sum_{n=0}^{\infty} t_n p_n\right)\label{eq:gp} \\
		\text{s.t. }& p_n>0,\ \forall n\ge 0\nonumber\\
		&\sum_{n=0}^{\infty}p_n = 1, \nonumber
	\end{align}
where \(\beta_n = E(\Delta_n^2)\) is the second moment of the increment, \(t_n = O(2^n)\) is the computational cost to sample $\Delta_n$, and \(\alpha = E(Y)\). It has been proved by \citet{rhee2015unbiased} that the optimal probability mass function
	\begin{equation}
		p_n^* = \frac{\sqrt{\beta_n}}{\sqrt{\alpha^2 + c^* t_n}},\label{eq:pn*}
	\end{equation}
	where \(c^*\) is the unique solution to
	\begin{equation}
	\sum_{n=0}^{\infty}\sqrt{\frac{\beta_n}{\alpha^2 + c t_n}} = 1. \label{eq:c*}
	\end{equation}
Thus, $c^*$ is an implicit function of $(\beta_n,n\geq0)$ and $\alpha$. However, these parameters are generally unknown. 

   In practice, we may estimate a finite number of parameters using Monte Carlo simulation and approximate the remaining tail parameters through their asymptotic behavior. Specifically, we may estimate $\beta_0,\beta_1,...,\beta_m$ via Monte Carlo simulation for a sufficiently large $m$, and then approximate the tail $\beta_{m+1},\beta_{m+2},...$ using a recursion formula $\beta_{n+1}=\beta_n 2^{-2p}$, $\forall n\geq m$, where $p>1/2$. Let
	\[
	\theta := (\beta_0, \ldots, \beta_m, \alpha)^\top.
	\] 
Based on \(N\) independent samples, we construct estimators 
\[
\hat{\beta}_n =\frac{1}{N}\sum_{i=1}^{N}\Delta_{n,i}^2, \quad \hat{\alpha} = \frac{1}{N}\sum_{i=1}^{N}Y_i. 
\]
The estimated parameter vector 
\[
\hat{\theta} := (\hat{\beta}_0, \ldots, \hat{\beta}_m, \hat{\alpha})^\top
\]
is then substituted into equation \eqref{eq:pn*}, together with a truncated version of equation \eqref{eq:c*}, to yield the estimated probability mass function \(\hat{p}_n\). This leads to a computational efficiency loss of the single term estimator, defined as 
\[
g(\hat{p}) - g(p^*)=\left(\sum_{n=0}^{\infty}\frac{\beta_n}{\hat{p}_n} - \alpha^2\right)\left(\sum_{n=0}^{\infty}t_n\hat{p}_n\right)-\left(\sum_{n=0}^{\infty}\frac{\beta_n}{p_n^*} - \alpha^2\right)\left(\sum_{n=0}^{\infty} t_n p_n^*\right).
\]
Since the total error consists of both parameter estimation error and truncation error from the tail approximation, its analysis is challenging.

Before we perform an error analysis, we shall impose an assumption on $\hat{\theta}$. By the central limit theorem, as \(N \to \infty\), the distribution of \(\sqrt{N}(\hat{\theta} - \theta)\) converges to a normal distribution \(\mathcal{N}(0, \Sigma)\), where \(\Sigma\) is the covariance matrix. Moreover, $\hat{\theta}$ must remain positive by the definitions of $(\beta_n,n\geq0)$ and $\alpha$. In financial applications, $\alpha$ represents the option price, and is thus positive. Therefore, we introduce the following assumption on $\hat{\theta}$ to ensure that these properties are satisfied.
	
\begin{assumption}
	\label{assu:1.1}
	The parameter estimators satisfy
	\begin{equation*}
		\hat{\beta}_n = \beta_n + \frac{\sigma_n}{\sqrt{N}} \hat{\varepsilon}_n, \quad \hat{\alpha} = \alpha + \frac{\sigma_{\alpha}}{\sqrt{N}} \hat{\varepsilon}_{m+1},
	\end{equation*}
where $\sigma_n^2 = \operatorname{Var}(\Delta_n^2)$, and $\sigma_{\alpha}^2 = \operatorname{Var}(Y)$. The truncated normal random variables $\hat{\varepsilon}_n$ are defined as 
\[
\hat{\varepsilon}_n = 
\begin{cases}
\varepsilon_n, & \text{if } -B_N < \varepsilon_n < B_N, \\
B_N, & \text{if } \varepsilon_n \geq B_N, \\
-B_N, & \text{if } \varepsilon_n \leq -B_N.
\end{cases}
\]
where $\varepsilon_n$, $n=0,..,m+1$ are standard normal random variables, and $B_N$ is a deterministic function of $N$ satisfying $B_N\asymp N^q$, $0<q<1/2$, i.e.,
\[
\lim_{N\rightarrow +\infty}\frac{B_N}{N^q}=c\in (0,+\infty).
\]
\end{assumption}

\begin{remark}	
We note three points from Assumption \ref{assu:1.1}. First, the variables $\varepsilon_n$ are not necessarily independent, which implies that $\hat{\beta}_n$, $n=0,1,...,m$ and $\hat{\alpha}$ can be correlated. Second, $\hat{\theta}$ is always positive for a sufficiently large $N$, and \(\sqrt{N}(\hat{\theta} - \theta)\) converges in distribution to \(\mathcal{N}(0, \Sigma)\). Third, $\hat{\theta}$ is an unbiased estimator of $\theta$, i.e., $E(\hat{\theta})=\theta$. 
\end{remark}

For notational convenience, let $\hat{\varepsilon}= (\hat{\varepsilon}_0, \dots, \hat{\varepsilon}_{m}, \hat{\varepsilon}_{m+1})\top$, and \(D = \text{diag}(\sigma_0, \ldots, \sigma_m, \sigma_\alpha)\). We can write
\begin{equation}
	\hat{\theta} = \theta + \frac{1}{\sqrt{N}} D\hat{\varepsilon}. \label{eq:deltatheta}
\end{equation}
Moreover, $\|A\|$ denotes the Frobenius norm of the matrix $A=(a_{ij})_{d_1\times d_2}$, such that 
\[
\|A\|=\sqrt{\sum_{i=1}^{d_1}\sum_{j=1}^{d_2}|a_{i,j}|^2}. 
\]
Let $\nabla p(\theta)=(\frac{\partial p}{\partial \theta_0},\frac{\partial p}{\partial \theta_1},...,\frac{\partial p}{\partial \theta_{m+1}})$ be the first derivative of $p$ with respect to $\theta$, and $\nabla^2 g_m(p)=(\frac{\partial g_m}{\partial p_i \partial p_j})_{(m+1)\times (m+1)}$ be the second derivative of $g_m(p)$ with respect to $p$. Finally, We denote by $C$ a generic constant independent of $m$, whose value may change from line to line, and by $C_m$ a constant that may depend on $m$.

	\section{Finite-Dimensional Error Analysis}
	
	In this section, we analyze the impact of parameter estimation error on the optimal distribution and the computational efficiency loss for the $m$-truncated optimization problem as a starting point. 
	
	The $m$-truncated optimization problem is
	\begin{align}
		\min_p g_m(p)&=\left(\sum_{n=0}^{m}\frac{\beta_n}{p_n} - \alpha^2\right)\left(\sum_{n=0}^{m} t_n p_n\right)\label{eq:gm} \\
		\text{s.t. }& p_n>0,\ \forall n\ge 0\nonumber\\
		&\sum_{n=0}^{m}p_n = 1. \nonumber
	\end{align}
	Following the argument in the proof of Theorem 4 from \citet{rhee2015unbiased}, we obtain the optimal distribution for this truncated problem as
	\begin{equation*}
		p_n^* = \frac{\sqrt{\beta_n}}{\sqrt{\alpha^2 + c^*(m) t_n}},\quad n = 0,1,\dots,m,
	\end{equation*}
	where \(c^*(m)\) is the unique solution of the equation
	\begin{equation}
		\sum_{n=0}^{m}\sqrt{\frac{\beta_n}{\alpha^2 + c^*(m) t_n}} = 1. \label{eq:hatc}
	\end{equation}

	Our goal is to analyze $E(g_m(\hat{p})-g_m(p^*))$ based on Assumption \ref{assu:1.1}. Since $\hat{\theta}$ converges in probability to $\theta$, a natural approach is to apply a Taylor expansion of $g_m(p(\hat{\theta}))$ around $\theta$. This requires sufficient smoothness and boundedness of the implicit function $c(\hat{\theta})$ and the optimal distribution function $p_n(\hat{\theta})$ in a neighborhood of $\theta$. Therefore, we first establish these regularity properties and derive Lemmas \ref{lem:yijie} and \ref{lem:erjie}.

\begin{lemma}
		\label{lem:yijie}
		Let 
		\[ U_i=[\theta_i-\frac{\sigma_i}{\sqrt{N}}B_N,\theta_i+\frac{\sigma_i}{\sqrt{N}}B_N]. \]
		be a bounded closed interval, where $\sigma_{m+1}:=\sigma_{\alpha}$. Let $U=(U_0,...,U_{m+1})^\top$. Then, for a sufficiently large $N$, it holds that

			\[
			\sup_{\hat{\theta}\in U}\|\nabla c(\hat{\theta})\|<\infty, \quad \sup_{\hat{\theta}\in U}\|\nabla p_n(\hat{\theta})\|<\infty,\qquad \forall n=0,\dots,m,
			\]
			and 
			\[
			\sup_{\hat{\theta}\in U}\|\nabla^2 g_m(p)\|<\infty.
			\]
\end{lemma}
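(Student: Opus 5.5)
The plan is to use the implicit function theorem together with a compactness argument on $U$. The box $U$ shrinks to $\theta$ as $N\to\infty$, because $\sigma_i B_N/\sqrt{N}\asymp N^{q-1/2}\to 0$ when $q<1/2$. So for $N$ large enough, $U$ lies inside a fixed compact neighbourhood $K$ of $\theta$. We choose $K$ so that every coordinate stays bounded away from zero: $\hat\beta_n\ge \beta_n/2>0$ for $n\le m$ and $\hat\alpha\ge \alpha/2>0$. It then suffices to show that $c$, $p_n$ and $\nabla^2 g_m$ are continuous on $K$ and that $c$ and $p_n$ are $C^1$ there. A continuous function on a compact set is bounded, so each supremum in the statement is finite. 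The constants may depend on $m$, which the lemma allows.

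\textbf{Step 1 ($c(\hat\theta)$ is well defined).} For $\hat\theta\in K$, define
\[
F(c,\hat\theta)=\sum_{n=0}^m \sqrt{\hat\beta_n/(\hat\alpha^2+c\,t_n)}-1 .
\]
For $c\ge 0$, $F$ is strictly decreasing in $c$ and tends to $-1$ as $c\to\infty$. To have a positive root we need $F(0,\hat\theta)=\sum_n\sqrt{\hat\beta_n}/\hat\alpha-1>0$. At the true $\theta$ this holds, since $c^*(m)>0$ in the truncated problem, and positivity of $c^*$ is implicit in the optimality result cited from \citet{rhee2015unbiased}. By continuity it still holds on a small enough $K$. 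Uniqueness of the root follows from strict monotonicity.

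\textbf{Step 2 (smoothness and bounded gradients).} $F$ is $C^\infty$ on $(0,\infty)\times K$, and
\[
\partial_c F=-\tfrac12\sum_n \sqrt{\hat\beta_n}\,t_n(\hat\alpha^2+ct_n)^{-3/2}<0 .
\]
The implicit function theorem then makes $c(\hat\theta)$ a $C^1$ (indeed $C^\infty$) function, with $\nabla c=-\nabla_\theta F/\partial_c F$. Continuity of $c$ on the compact set $K$ gives bounds $0<c_{\min}\le c(\hat\theta)\le c_{\max}$. Hence $|\partial_c F|$ is bounded below on $K$ and $\nabla_\theta F$ is bounded above: its entries are $\tfrac12\hat\beta_n^{-1/2}(\hat\alpha^2+ct_n)^{-1/2}$ and $-\hat\alpha\sum_n\sqrt{\hat\beta_n}(\hat\alpha^2+ct_n)^{-3/2}$, both bounded because $\hat\beta_n$ and $\hat\alpha$ are bounded away from $0$. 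This yields $\sup_U\|\nabla c\|<\infty$. The components $p_n(\hat\theta)=\sqrt{\hat\beta_n}/\sqrt{\hat\alpha^2+c(\hat\theta)t_n}$ are compositions of smooth maps with nonvanishing denominators. By the chain rule, $\nabla p_n$ is a continuous combination of $\nabla c$ and bounded factors, so it is bounded as well.

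\textbf{Step 3 (the Hessian of $g_m$).} Write $g_m(p)=A(p)B(p)$ with $A=\sum_n\beta_n/p_n-\alpha^2$ and $B=\sum_n t_np_n$. Since $B$ is linear, $\partial_{ij}g_m=\partial_{ij}A\,B+\partial_iA\,\partial_jB+\partial_jA\,\partial_iB$, where $\partial_iA=-\beta_i/p_i^2$, $\partial_{ii}A=2\beta_i/p_i^3$ and $\partial_iB=t_i$. These are bounded as long as $p_i$ stays bounded away from $0$ and above. By Step 2, $p_i(\hat\theta)\ge \sqrt{\beta_i/2}\,/\sqrt{\alpha_{\max}^2+c_{\max}t_i}>0$ uniformly on $K$, and $p_i\le 1$. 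Here $p=p(\hat\theta)$, the estimated distribution, and Step 2 ensures it satisfies these bounds.

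\textbf{Main obstacle.} The delicate point is Step 1: guaranteeing a positive root $c(\hat\theta)$ and uniform two-sided bounds on it near $\theta$. This needs both the strict positivity of $c^*(m)$ and the fact that $U$ shrinks, which comes from $q<1/2$. Everything after that reduces to continuity on a compact set.
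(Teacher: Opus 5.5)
Your argument is essentially the paper's: the implicit function theorem for $c(\hat\theta)$ using $\partial_c F<0$, the chain rule for $p_n$, the explicit Hessian of $g_m$, and finiteness of each supremum from continuity on the closed box $U$. You add care that the paper leaves implicit, namely that $B_N/\sqrt{N}\to 0$ places $U$ inside a fixed compact set on which $\hat\beta_n$ and $\hat\alpha$ stay away from zero.

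Drop the claim that $c^*(m)>0$. For the truncated equation it is equivalent to $\sum_{n\le m}\sqrt{\beta_n}>\alpha$. The Rhee--Glynn result concerns the untruncated problem and does not give this, and the paper's own proof of Lemma~\ref{lem4.2} explicitly allows $\hat c\le 0$.

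You also do not need it. On $c>-\hat\alpha^2/t_m$ the map $c\mapsto F(c,\hat\theta)$ is smooth and strictly decreasing from $+\infty$ to $-1$. So a unique root always exists, and continuity on the compact set keeps $\hat\alpha^2+c(\hat\theta)\,t_n$ bounded away from zero, which is all your Steps 2 and 3 actually use.
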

\begin{proof}
	Let 
	\begin{equation}
	 F(\hat{\theta},c)=\sum_{n=0}^{m}\sqrt{\frac{\hat{\beta}_n}{\hat{\alpha}^2 + c t_n}} - 1.\label{F}
   \end{equation}
   Since the left hand side of (\ref{F}) is strictly monotone in $c$, for any $\hat{\theta}\in U$, there exists a unique solution $c(\hat{\theta})$ satisfying $F(\hat{\theta},c)=0$. By the implicit function theorem, we have
\begin{equation}
	\frac{\partial c}{\partial \hat{\beta}_i} = -\frac{1}{2J}\frac{1}{\sqrt{\hat{\beta}_i(\hat{\alpha}^2 + c t_i)}}, \quad \frac{\partial c}{\partial \hat{\alpha}} = \frac{1}{J}\sum_{n=0}^{m}\frac{\hat{\alpha}\sqrt{\hat{\beta}_n}}{(\hat{\alpha}^2 + c t_n)^{3/2}},
			\label{eq:pctheta}
\end{equation}
where 
		\[
		J:=J(\theta,c(\hat{\theta})) = \frac{\partial F}{\partial c}=-\sum_{n=0}^{m}\frac{t_n\sqrt{\hat{\beta}_n}}{2(\hat{\alpha}^2 + c t_n)^{3/2}}.
		\]
These derivatives are continuous functions of $\hat{\theta}$ and $c(\hat\theta)$, and thus are uniformly bounded on the closed interval $U$. Therefore,
\[
\sup_{\hat{\theta}\in U}\|\nabla c(\hat{\theta})\|<\infty.
\]
For the probability mass function $p_n=\sqrt{\hat{\beta}_n}/\sqrt{\hat{\alpha}^2 + c t_n}$, the chain rule gives
\[
\frac{\partial p_n}{\partial \hat{\beta}_i}=\frac{\delta_{n,i}}{2\sqrt{\hat{\beta}_n(\hat{\alpha}^2+ct_n)}}+\frac{1}{4J}\frac{t_n\sqrt{\hat{\beta}_n}}{(\hat{\alpha}^2+ct_n)^{3/2}}\frac{1}{\sqrt{\hat{\beta}_i(\hat{\alpha}^2 + c t_i)}},
\]
where $\delta_{n,i}=0$ if $i\neq n$, and $\delta_{n,i}=1$ otherwise, and 
\[
\frac{\partial p_n}{\partial \hat{\alpha}}=-\frac{\hat{\alpha}\sqrt{\hat{\beta}_n}}{(\hat{\alpha}^2+ct_n)^{3/2}}-\frac{1}{2J}\frac{t_n\sqrt{\hat{\beta}_n}}{(\hat{\alpha}^2+ct_n)^{3/2}}\sum_{n=0}^{m}\frac{\hat{\alpha}\sqrt{\hat{\beta}_n}}{(\hat{\alpha}^2 + c t_n)^{3/2}}.
\]
Analogously, these derivatives are continuous and uniformly bounded on $U$, which implies
		\begin{equation}
			\sup_{\hat{\theta}\in U}\|\nabla p_n(\hat{\theta})\| < \infty.
			\label{eq:Cp}
		\end{equation}
Finally, we consider the objective function $g_m(p)$. A direct differentiation gives
	\begin{equation}
			\frac{\partial^2 g_m}{\partial p_n^2} = \frac{2\beta_n \sum_{i=0}^{m}t_i p_i}{p_n^3} - \frac{2\beta_n t_n}{p_n^2},\qquad
			\frac{\partial^2 g_m}{\partial p_n\partial p_l} = -\frac{t_l\beta_n}{p_n^2} - \frac{t_n\beta_l}{p_l^2}\quad (n\neq l),
			\label{eq:pgpp}
	\end{equation}
which are also uniformly bounded on $U$. Therefore,
		\[
			\sup_{\hat{\theta}\in U}\|\nabla^2 g_m(p)\|<\infty.
		\]
The proof is complete.
\end{proof}
	
	\begin{lemma}
		\label{lem:erjie}
		Under the same setting as in Lemma \ref{lem:yijie}, we have
			\[
			\sup_{\hat{\theta} \in U}\left|\frac{\partial^2c}{\partial\theta_i\partial\theta_j}\right|<\infty, \quad \sup_{\hat{\theta} \in U}\left|\frac{\partial^2 p_n}{\partial\theta_i\partial\theta_j}\right|<\infty,\qquad \forall n=0,\dots,m,
			\]
			and
			\[
			\sup_{\hat{\theta} \in U}\left|\frac{\partial^3 g_m}{\partial p_i\partial p_j\partial p_k}\right|<\infty.
			\]
	\end{lemma}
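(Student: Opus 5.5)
We will follow the strategy of Lemma \ref{lem:yijie}: every quantity in the statement will be written as an explicit continuous function of $(\hat{\theta},c(\hat{\theta}))$ on a region where all denominators stay away from zero. Continuity on the compact set $U$ then gives the uniform bounds.

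\emph{Step 1: uniform separation from zero.} For $N$ large enough, $B_N/\sqrt{N}\asymp N^{q-1/2}\to 0$. Hence $U$ shrinks to $\theta$, and we may assume $\hat{\beta}_n\ge \beta_n/2>0$ and $\hat{\alpha}\ge \alpha/2>0$ for all $\hat{\theta}\in U$.

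By Lemma \ref{lem:yijie}, $c(\hat{\theta})$ is $C^1$, hence continuous, on $U$. Since $U$ is compact, $c(U)$ is a compact set. It lies in $(-\min_n \hat{\alpha}^2/t_n,\infty)$, because (\ref{F}) has a solution there. Moreover $c(\hat\theta)>0$ near $\theta$, since $c^*(m)>0$ when $\sum_{n\le m}\sqrt{\beta_n}/\alpha>1$.

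It follows that $\hat{\alpha}^2+c\,t_n$ is bounded below by a positive constant on $U$. Likewise, $|J|$ is bounded below, because it is a finite sum of strictly positive continuous terms. Finally, $p_n(\hat{\theta})=\sqrt{\hat{\beta}_n/(\hat{\alpha}^2+ct_n)}$ is continuous and strictly positive on $U$, so $\inf_U p_n>0$.

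\emph{Step 2: second derivatives of $c$ and $p_n$.} We differentiate the first-order formulas (\ref{eq:pctheta}) once more with respect to $\hat{\theta}_j$, using the chain rule through $c$. Each resulting term is a product of the following factors:
\begin{itemize}
\item powers of $\hat{\beta}_n$ (possibly negative half-integer);
\item powers of $(\hat{\alpha}^2+ct_n)$ (negative half-integer);
\item powers of $J^{-1}$;
\item the first derivatives $\partial c/\partial\hat{\theta}_j$;
\item the derivatives $\partial J/\partial \hat{\theta}_j$ and $\partial J/\partial c$, which are themselves of the same form.
\end{itemize}
Equivalently, one can use the second-order implicit function formula
\[
\frac{\partial^2 c}{\partial\theta_i\partial\theta_j}=-\frac{1}{J}\Bigl(F_{ij}+F_{ic}c_j+F_{jc}c_i+F_{cc}c_ic_j\Bigr),
\]
where $F$ is $C^\infty$ on the region from Step 1. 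Either way, by Step 1 and Lemma \ref{lem:yijie} each factor is continuous and bounded on $U$, so the supremum is finite.

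The same argument applied to the chain-rule expressions for $\partial p_n/\partial\hat{\theta}_i$ from Lemma \ref{lem:yijie} gives $\sup_U|\partial^2 p_n/\partial\theta_i\partial\theta_j|<\infty$.

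\emph{Step 3: third derivatives of $g_m$.} We differentiate (\ref{eq:pgpp}) once more. The nonzero third derivatives are rational in $p$, with denominators only $p_n^3$ or $p_n^4$. For instance,
\[
\partial^3 g_m/\partial p_n^3=-6\beta_n\sum_i t_ip_i/p_n^4+6\beta_nt_n/p_n^3,
\]
and similarly for the mixed derivatives $\partial^3 g_m/\partial p_n^2\partial p_l$ and the like, while derivatives in three distinct indices vanish. The numerators are bounded because $p_n\le 1$. Since $\inf_U p_n>0$ by Step 1, these derivatives are bounded on $U$.

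The main obstacle is Step 1, specifically bounding $c(\hat{\theta})$ and $|J|$ away from the singular values uniformly over $U$. The key points are that $U$ shrinks to $\theta$ and that $c$ is continuous on $U$ by Lemma \ref{lem:yijie}. Once this is secured, Steps 2 and 3 are routine differentiation.
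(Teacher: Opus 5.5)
Your proposal is correct and follows the paper's route: the paper's proof just says the argument is analogous to Lemma \ref{lem:yijie}, meaning the second derivatives of $c$ and $p_n$ and the third derivatives of $g_m$ are continuous functions of $(\hat\theta,c(\hat\theta))$ and hence bounded on the compact set $U$. Your Step 1 usefully spells out what the paper leaves implicit, namely the uniform positivity of $\hat\alpha^2+c\,t_n$, $|J|$ and $p_n$ on $U$. The side remark that $c(\hat\theta)>0$ is not needed, and it holds only under the condition you state, since positivity of $\hat\alpha^2+c(\hat\theta)t_n$ already follows from $c(\hat\theta)$ being the root of \eqref{F} together with continuity and compactness.
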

	\begin{proof}
	The proof is analogous to that of Lemma \ref{lem:yijie}. 
	\end{proof}	
	
	\begin{lemma} \label{lem:optimal}
	It holds that 
	\[
	\nabla g_m(p^*)\Delta p = 0,
	\]
	where $p^*$ denotes the optimal probability mass function for the $m$-truncated optimization problem \eqref{eq:gm}.
	\end{lemma}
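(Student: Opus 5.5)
The claim is a first-order optimality statement. Here $\Delta p=\hat p-p^*$ (or any direction tangent to the simplex) satisfies $\sum_{n=0}^m \Delta p_n = 0$, because both $\hat p$ and $p^*$ sum to one. The truncated equation \eqref{eq:hatc} is enforced exactly for $\hat\theta$, and $p_n(\hat\theta)$ is defined through it. The plan is therefore to show that the gradient $\nabla g_m(p^*)$ is a constant multiple of the vector $(1,\dots,1)$. Then $\nabla g_m(p^*)\Delta p=\lambda\sum_n\Delta p_n=0$.

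\textbf{Step 1 (gradient).} Write $V(p)=\sum_{n=0}^m\beta_n/p_n-\alpha^2$ and $T(p)=\sum_{n=0}^m t_np_n$. Direct differentiation gives
\[
\frac{\partial g_m}{\partial p_n}(p)=-\frac{\beta_n}{p_n^2}\,T(p)+t_n\,V(p).
\]

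\textbf{Step 2 (evaluate at $p^*$).} Substitute $p_n^*=\sqrt{\beta_n}/\sqrt{\alpha^2+c^*(m)t_n}$, so that $\beta_n/(p_n^*)^2=\alpha^2+c^*(m)t_n$. Then
\[
\frac{\partial g_m}{\partial p_n}(p^*)=t_n\bigl(V(p^*)-c^*(m)T(p^*)\bigr)-\alpha^2T(p^*).
\]
This is constant in $n$ provided $V(p^*)=c^*(m)T(p^*)$. To verify this identity, use \eqref{eq:hatc} twice:
\[
V(p^*)=\sum_n\sqrt{\beta_n}\sqrt{\alpha^2+c^*t_n}-\alpha^2\sum_n p_n^*=\sum_n p_n^*\bigl(\alpha^2+c^*t_n\bigr)-\alpha^2=c^*\sum_n t_np_n^*=c^*T(p^*).
\]
Hence $\partial g_m/\partial p_n(p^*)=-\alpha^2T(p^*)$ for every $n$.

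\textbf{Step 3 (conclude).} Since $\sum_n\Delta p_n=\sum_n\hat p_n-\sum_n p_n^*=1-1=0$, we get $\nabla g_m(p^*)\Delta p=-\alpha^2T(p^*)\sum_n\Delta p_n=0$.

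Alternatively, one could cite the Lagrange/KKT conditions from Theorem 4 of \citet{rhee2015unbiased} for the interior minimizer. That route, however, only gives proportionality of the gradient to $(1,\dots,1)$ abstractly, so the explicit computation above is preferable.

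The only delicate point is Step 2, namely recognizing the identity $V(p^*)=c^*(m)T(p^*)$ via the normalization \eqref{eq:hatc}. One must also make sure that $\Delta p$ is interpreted as a difference of two probability vectors on $\{0,\dots,m\}$, i.e.\ that $\hat p$ is normalized using the truncated equation, so that its components sum to zero.
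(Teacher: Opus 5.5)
Your proof is correct, but it takes a different route from the paper's. The paper never computes the gradient. It eliminates $p_0=1-\sum_{n\ge 1}p_n$ and applies the first-order condition $\partial g_m/\partial p_n-\partial g_m/\partial p_0=0$ at the interior minimizer $p^*$ of the reduced function $f(p_1,\dots,p_m)=g_m(p)$. From this it reads off $\nabla g_m(p^*)=\frac{\partial g_m(p^*)}{\partial p_0}(1,\dots,1)$, and $\sum_n\Delta p_n=0$ then finishes the argument.

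The paper's argument uses only the fact that $p^*$ minimizes \eqref{eq:gm} on the open simplex; it never uses the closed form of $p^*$. Your argument is the reverse. It uses only the closed form $\beta_n/(p_n^*)^2=\alpha^2+c^*(m)t_n$ and the normalization \eqref{eq:hatc}, and never uses optimality. It therefore shows independently that this $p^*$ is a stationary point on the simplex, and it identifies the common value $-\alpha^2T(p^*)$ of the partial derivatives.

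Your remark that the abstract route is inferior is not quite fair. Proportionality of the gradient to $(1,\dots,1)$ is all the lemma needs, and that route gives it with less computation.

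Your identity does have a real payoff elsewhere in the paper, though. The same manipulation applied to the geometric-tail distribution $p(m)$ of Section 5 gives
\[
t_iV_m-\beta_iT_m/p_i^2=t_i\alpha^2(\gamma-s)p_m-\alpha^2T_m,
\]
as in the proof of Theorem \ref{the:4.3}. This is not constant in $i$, and it shows exactly why $E(\nabla g_m(p(m))\Delta p)\neq 0$ there, whereas it vanishes at $p^*$.
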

	\begin{proof}
	Let $p_0=1-\sum_{n=1}^{m}p_n$, and define
	\[
	f(p_1,p_2,...,p_m)=g_m(p).
	\]
	Since $p^*$ is optimal, the first-order optimality condition gives
	\[
	\frac{\partial f(p^*)}{\partial p_n}=\frac{\partial g_m(p^*)}{\partial p_n}-\frac{\partial g_m(p^*)}{\partial p_0}=0
	\]
	for any $n=1,..,m$. It follows that 
	\[
	\nabla g_m(p^*)=\frac{\partial g_m(p^*)}{\partial p_0}(1,1,...,1).
	\]
	Moreover, as $p^*$ and $p$ are probability mass functions in the feasible region of optimization problem \eqref{eq:gm}, we have 
	\[
	(1,1,...,1)\Delta p=\sum_{n=0}^{m}(p_n-p_n^*)=0
	\]
	and therefore
	\[
	\nabla g_m(p^*)\Delta p = \frac{\partial g_m(p^*)}{\partial p_0}(1,1,...,1)\Delta p=0.
	\]
	The proof is complete.
	\end{proof}

	\begin{theorem}
		\label{the:yijie}
		Under Assumption \ref{assu:1.1}, the computational efficiency loss $\Delta g = g_m(\hat{p}) - g_m(p^*)$ satisfies
\[
		\displaystyle E(\Delta g)\leq C_m N^{-1}, \quad \displaystyle \Var(\Delta g)\leq C_m N^{-2},
\]
		where $N$ is the sample size.
	\end{theorem}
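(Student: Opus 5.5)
We propose a second-order Taylor expansion of $g_m$ in $p$ around $p^*$, combined with a first-order expansion of $p(\hat\theta)$ around $\theta$. Set $\Delta p=\hat p-p^*$ with $\hat p=p(\hat\theta)$ and $p^*=p(\theta)$. Since $\hat p$ and $p^*$ are both probability mass functions on $\{0,\dots,m\}$, Lemma \ref{lem:optimal} kills the linear term. Taylor's theorem with Lagrange remainder then gives
\[
\Delta g=\tfrac12\,\Delta p^\top \nabla^2 g_m(\tilde p)\,\Delta p
\]
for some $\tilde p$ on the segment between $p^*$ and $\hat p$. Alternatively, one can stop at third order and use Lemma \ref{lem:erjie} for the cubic remainder; only the second-order form is needed for upper bounds.

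The key point is that under Assumption \ref{assu:1.1} the estimate $\hat\theta$ lies in $U$ almost surely, because $|\hat\varepsilon_n|\le B_N$. We must also check that the segment between $p^*$ and $\hat p$ stays in the region where the bound of Lemma \ref{lem:yijie} on $\nabla^2 g_m$ applies. Since $B_N/\sqrt N\asymp N^{q-1/2}\to 0$, for large $N$ the box $U$ shrinks to $\theta$. Hence all $\hat p_n$ and $p_n^*$ are bounded below by some $\delta_m>0$, and the convex combination $\tilde p$ inherits this bound. The explicit formula \eqref{eq:pgpp} then gives $\|\nabla^2 g_m(\tilde p)\|\le C_m$. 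This uniformity (bounded away from zero, not merely finite) is the step I consider the main technical obstacle; I would handle it directly from continuity of $p(\cdot)$ at $\theta$ and positivity of $p^*$.

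Next we control $\Delta p$. By the mean value theorem along the segment from $\theta$ to $\hat\theta$, which lies inside the box $U$, Lemma \ref{lem:yijie} gives
\[
\|\Delta p\|\le \sup_{U}\|\nabla p\|\,\|\hat\theta-\theta\|\le C_m N^{-1/2}\|D\hat\varepsilon\|,
\]
using \eqref{eq:deltatheta}. Consequently
\[
0\le \Delta g\le C_m N^{-1}\|D\hat\varepsilon\|^2\le C_m N^{-1}\|\hat\varepsilon\|^2,
\]
where $\Delta g\ge0$ by optimality of $p^*$. Since $|\hat\varepsilon_n|\le|\varepsilon_n|$ with $\varepsilon_n$ standard normal, we have $E\|\hat\varepsilon\|^2\le m+2$. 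Moreover, by Cauchy--Schwarz, $E\|\hat\varepsilon\|^4\le (m+2)^2\max_n E\varepsilon_n^4=3(m+2)^2$, which holds regardless of the dependence among the $\varepsilon_n$.

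Finally, $E(\Delta g)\le C_m N^{-1}(m+2)=C_mN^{-1}$. For the variance we use $\Var(\Delta g)\le E(\Delta g^2)\le C_m^2N^{-2}E\|\hat\varepsilon\|^4\le C_mN^{-2}$. Both bounds hold for all sufficiently large $N$. For the finitely many smaller $N$ one can enlarge $C_m$, provided $\hat\theta$ stays positive; otherwise we simply state the result for $N$ large, as in Lemma \ref{lem:yijie}.
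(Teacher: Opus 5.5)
Your proposal is correct and follows essentially the same route as the paper. Both use a second-order Taylor expansion of $g_m$ at $p^*$ with the linear term removed by Lemma \ref{lem:optimal}, the mean value theorem with Lemma \ref{lem:yijie} to get $\|\Delta p\|\le C\|\Delta\theta\|$, and second and fourth moments of $\hat{\varepsilon}$; your extra checks (that the intermediate point $\tilde p$ stays uniformly away from zero, so the Hessian bound applies there, and the bound $E\|\hat{\varepsilon}\|^4\le 3(m+2)^2$, which holds without independence) fill in details the paper's proof leaves implicit.
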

	\begin{proof}
		Let $\sigma_{\max}$ be the largest diagonal element of the matrix $D$ in equation \eqref{eq:deltatheta}, i.e, $\sigma_{\max}=max(\sigma_0, \ldots, \sigma_m, \sigma_\alpha)\). Then
		\begin{equation}
			\|\Delta\theta\| \le \frac{\sigma_{\max}\|\hat{\varepsilon}\|}{\sqrt{N}}. \label{eq:thetafan}
		\end{equation}
	    By the definition of the truncated normal distribution, we have $E(\hat{\varepsilon}_n^2)<E(\varepsilon_n^2)=1$ for each $n=0,1,...,m+1$, and therefore $E[\|\hat{\varepsilon}\|^2]<C_m$. Hence, we get from \eqref{eq:thetafan} that		
		\[
		E[\|\Delta\theta\|^2] \leq C_m N^{-1},  \quad E[\|\Delta\theta\|^4] \leq  C_m N^{-2}.
		\]
		
		For each $p_n(\theta)$, by the mean value theorem, there exists $\xi_n$ between $\theta$ and $\hat{\theta}$, such that
		\[
		\Delta p_n = p_n(\hat{\theta}) - p_n(\theta) = \nabla p_n(\xi_n) \Delta\theta.
		\]
		It follows by Lemma \ref{lem:yijie} that 
		\[
			\|\Delta p\| \le C\|\Delta\theta\|. 
		\]
		Applying a second-order Taylor expansion of $g_m(p)$ at $p^*$, and using $\nabla g_m(p^*)\Delta p = 0$ from Lemma \ref{lem:optimal}, we obtain
		\[
		\Delta g = g_m(\hat{p}) - g_m(p^*) = \frac{1}{2} \Delta p^\top \nabla^2 g_m(\xi_g) \Delta p,
		\]
		where $\xi_g$ lies between $p^*$ and $\hat{p}$. Consequently,
		\begin{equation*}
			|\Delta g| \le C\|\Delta p\|^2 \le C\|\Delta\theta\|^2.
		\end{equation*}
		Taking expectations on both sides yields
		\[
		E[\Delta g] \le C_m E[\|\Delta\theta\|^2] \leq C_m N^{-1}, \quad
		E[(\Delta g)^2] \le C_m E[\|\Delta\theta\|^4] \leq C_m N^{-2}.
		\]
		Therefore, the variance $\Var(\Delta g) \le E[(\Delta g)^2] \leq C_m N^{-2}$. This completes the proof.
	\end{proof}
	
Theorem \ref{the:yijie} provides only the convergence rate, without an explicit representation of the leading-order term. We show that such a representation can be obtained under an additional assumption.

	\begin{assumption}\label{assu:2}
	The parameters $\hat{\beta}_n$, $n=0,...,m$, and $\hat{\alpha}$ are estimated independently, i.e., $(\varepsilon_0, \dots, \varepsilon_m, \varepsilon_{m+1})$ is a vector of independent standard normal random variables.
	\end{assumption}
	\begin{theorem}
		\label{the:erjie}
		Under Assumptions \ref{assu:1.1} and \ref{assu:2}, the computational efficiency loss satisfies
            \[
			E(\Delta g) = \frac{\operatorname{tr}(A)}{2N} + O(N^{-3/2}), \quad \Var(\Delta g) = \frac{\operatorname{tr}(A^2)}{2N^2} + o(N^{-2}),
			\] 
			where $A = D (G^\top HG)D$ with $H = \nabla^2 g_m(p^*)$ and $G=\nabla p(\theta)\in \mathbb{R}^{(m+1)\times(m+2)}$. Moreover, it holds that
			\[
			N\Delta g \xrightarrow{d} \frac{1}{2}\,\varepsilon^\top A\varepsilon,
			\]
			where $\varepsilon=(\varepsilon_0, \dots, \varepsilon_m, \varepsilon_{m+1})$.
	\end{theorem}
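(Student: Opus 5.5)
The idea is to refine the argument of Theorem \ref{the:yijie} by keeping the exact second-order term and bounding the remainder by third-order quantities.

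\textbf{Step 1 (expansion).} Write $\Delta\theta=N^{-1/2}D\hat{\varepsilon}$. Using Lemmas \ref{lem:yijie} and \ref{lem:erjie}, expand $p(\hat\theta)$ to second order:
\[
\Delta p = G\Delta\theta + R_p, \qquad \|R_p\|\le C_m\|\Delta\theta\|^2 .
\]
Then expand $g_m$ to third order around $p^*$. By Lemma \ref{lem:optimal} the linear term vanishes, and the third derivatives are bounded on $U$ by Lemma \ref{lem:erjie}. Hence
\[
\Delta g=\tfrac12\Delta p^\top H\Delta p + R_g, \qquad |R_g|\le C_m\|\Delta p\|^3 .
\]
Substituting the expansion of $\Delta p$ gives
\[
\Delta g=\frac{1}{2N}\hat{\varepsilon}^\top A\hat{\varepsilon}+R, \qquad |R|\le C_m\bigl(\|\Delta\theta\|^3+\|\Delta\theta\|^4\bigr).
\]
Since $\|\hat\varepsilon\|\le\|\varepsilon\|$, we have $E|R|\le C_m N^{-3/2}$ and $E[R^2]\le C_m N^{-3}$.

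\textbf{Step 2 (replace $\hat\varepsilon$ by $\varepsilon$).} Let $Q(x)=x^\top A x$. The two quadratic forms differ only on the event $\{\max_n|\varepsilon_n|\ge B_N\}$. By Gaussian tails, this event has probability at most $C_m e^{-cN^{2q}}$. Cauchy--Schwarz then gives
\[
E|Q(\hat\varepsilon)-Q(\varepsilon)|^k=o(N^{-r}) \quad\text{for every } r \text{ and } k=1,2 .
\]
Under Assumption \ref{assu:2}, $E[Q(\varepsilon)]=\operatorname{tr}(A)$ and $\Var(Q(\varepsilon))=2\operatorname{tr}(A^2)$, using symmetry of $A$, which holds because $H$ is symmetric. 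Therefore
\[
E(\Delta g)=\frac{\operatorname{tr}(A)}{2N}+O(N^{-3/2}).
\]
For the variance, write $\Delta g=\frac{1}{2N}Q(\varepsilon)+\tilde R$ with $E[\tilde R^2]=O(N^{-3})$. Then
\[
\Var(\Delta g)=\frac{1}{4N^2}\,2\operatorname{tr}(A^2)+O\bigl(N^{-1}\cdot N^{-3/2}\bigr)+O(N^{-3}),
\]
where the middle term bounds the cross term via Cauchy--Schwarz. This equals $\frac{\operatorname{tr}(A^2)}{2N^2}+o(N^{-2})$.

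\textbf{Step 3 (distributional limit).} We have $N\Delta g=\frac12 Q(\hat\varepsilon)+NR$, and $E|NR|\le C_mN^{-1/2}\to0$, so $NR\to0$ in probability. Also $\hat\varepsilon=\varepsilon$ with probability tending to one. Slutsky's theorem then gives $N\Delta g\xrightarrow{d}\frac12\varepsilon^\top A\varepsilon$. (In fact the convergence is in probability, since $\varepsilon$ lives on the same space.)

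\textbf{Main obstacle.} The delicate point is Step 1: it needs uniform third-order bounds on $g_m$ and second-order bounds on $p$ over the segment between $\theta$ and $\hat\theta$. Both segments must stay inside $U$, and $U$ must keep $\hat\beta_n,\hat\alpha$ bounded away from zero. Both conditions hold for large $N$ because $B_N/\sqrt N\to0$ when $q<1/2$. With that in hand, all constants $C_m$ are deterministic and the moment bounds above go through. A secondary check is that $H$ is taken on the full $(m+1)$-dimensional $p$ while $\Delta p$ satisfies $\sum_n\Delta p_n=0$; the quadratic form needs no projection, so $A$ as stated is correct.
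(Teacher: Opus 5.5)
Your proof is correct and follows the paper's argument: a second-order expansion of $p$ at $\theta$, a third-order expansion of $g_m$ at $p^*$ with the linear term removed by Lemma \ref{lem:optimal}, the remainder terms bounded by moments of $\|\hat\varepsilon\|$, and Slutsky's theorem for the limit. The one small difference is how you handle the truncation. You couple $\hat\varepsilon$ with $\varepsilon$ on the event $\{\max_n|\varepsilon_n|<B_N\}$ and then use the exact moments of a Gaussian quadratic form, whereas the paper computes $E(\hat\varepsilon_i^2)-1$ directly by integration by parts and says only ``similarly'' for the variance. Your route also makes the variance cross term explicit, which gives the sharper remainder $O(N^{-5/2})$.
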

	\begin{proof}
		A second-order Taylor expansion of $p_n$ at $\theta$ yields
		\[
		p_n(\hat{\theta}) = p_n^* + \underbrace{\nabla p_n(\theta)\Delta\theta}_{\Delta p_n^{(1)}} +\underbrace{\frac12\Delta\theta^\top\nabla^2 p_n(\eta_n)\Delta\theta}_{R_{p_n}},
		\]
		where $\eta_n$ lies between $\theta$ and $\hat{\theta}$. By equation \eqref{eq:deltatheta} and Lemma \ref{lem:erjie}, the leading-order term is
		\begin{equation*}
			\Delta p_n^{(1)}=\frac{1}{\sqrt{N}}(\nabla p_n(\theta)D)\hat{\varepsilon}
		\end{equation*}
		while the remainder satisfies
		\begin{equation*}
			|R_{pn}|\le C_m\|\Delta\theta\|^2 \le \frac{C_m}{N}\|\hat{\varepsilon}\|^2.
		\end{equation*}
		For notional convenience, we write
		\[
			\Delta p = \Delta p^{(1)} + R_p,
		\]
		where 
		\begin{equation}
			\Delta p^{(1)} =\begin{pmatrix}
				\Delta p_0^{(1)}\\ \vdots \\ \Delta p_m^{(1)}
			\end{pmatrix}=\begin{pmatrix}
				(\nabla p_0(\theta))\\ \vdots \\ (\nabla p_m(\theta))
			\end{pmatrix}\Delta\theta =G\Delta\theta \label{eq:deltap1}
		\end{equation}
		and $R_p:=(R_{p0},R_{p1},...,R_{pm})^\top$. It follows that
		\[
			\|\Delta p^{(1)}\|\le\frac{\|G\|\cdot\|D\|\cdot\|\hat{\varepsilon}\|}{\sqrt{N}}\leq \frac{C_m}{\sqrt{N}}\|\hat{\varepsilon}\|,
		\]
		and
		\[
			\|R_p\|\le \frac{C_m}{N}\|\hat{\varepsilon}\|^2.
		\] 
		Therefore,
		\[
		\|\Delta p\|\le \|\Delta p^{(1)}\| + \|R_p\|
		\le \frac{C_m\|\hat{\varepsilon}\|}{\sqrt{N}} + \frac{C_m}{N}\|\hat{\varepsilon}\|^2.
		\]
		
		Next, we apply a third-order Taylor expansion of $g_m(p)$ at $p^*$. Since $\nabla g_m(p^*)\Delta p=0$ by Lemma \ref{lem:optimal}, we obtain 
		\begin{align}
		g_m(\hat{p}) &= g_m(p^*) + \frac12\Delta p^\top H\Delta p + R_g\nonumber\\
		&=g_m(p^*)+\underbrace{\frac12(\Delta p^{(1)})^\top H\Delta p^{(1)}}_{\mathcal{Q}} + 
		\underbrace{(\Delta p^{(1)})^\top H R_p}_{S} + 
		\underbrace{\frac12 R_p^\top H R_p}_{T}+ R_g,\label{eq:QT}
		\end{align}
		where $H=\nabla^2 g_m(p^*)$ is the deterministic Hessian matrix of $g_m$ at $p^*$, and $R_g$ denotes the remainder. Here, $\mathcal{Q}$ represents the leading-order term, while $S$, $T$, and $R_g$ are higher-order terms. By Lemma \ref{lem:erjie}, 
		\begin{equation}
		|R_g| = \left|\frac16\sum_{i,j,k=0}^{m}\frac{\partial^3 g}{\partial p_i\partial p_j\partial p_k}(\zeta)\,\Delta p_i\Delta p_j\Delta p_k\right|\le C_m\|\Delta p\|^3\le \frac{C_m\|\hat{\varepsilon}\|^3}{N^{3/2}} + \frac{C_m\|\hat{\varepsilon}\|^6}{N^3}\label{eq:Rg},
		\end{equation}
		where $\zeta$ lies between $p^*$ and $\hat{p}$. Moreover, it holds that
		\[
		|S| \le \|\Delta p^{(1)}\|\cdot\|H\|\cdot\|R_p\|
		\le \frac{C_m\|\hat{\varepsilon}\|^3}{N^{3/2}},
		\]
		and
		\[
		|T| \le \frac12\|H\|\cdot\|R_p\|^2
		\le \frac{C_m\|\hat{\varepsilon}\|^4}{N^2}.
		\]
		Since the moments of $\hat{\varepsilon}$ of any fixed order are bounded, it follows by equation (\ref{eq:QT}), together with the estimates of $S$, $T$ and $R_g$, that
		\begin{equation}
		E(\Delta g) = E(\mathcal{Q}) + O(N^{-3/2}).\label{eq:Edeltag}
		\end{equation}
		
		It remains to analyze the leading-order term $\mathcal{Q}$. Substituting equation \eqref{eq:deltap1} into $\mathcal{Q}$, we obtain
		\begin{equation}
				\mathcal{Q} = \frac12(G\Delta\theta)^\top H(G\Delta\theta)
				= \frac{1}{2N} \hat{\varepsilon}^\top D^\top (G^\top HG)D\hat{\varepsilon}
				= \frac{1}{2N} \hat{\varepsilon}^\top A\hat{\varepsilon}.\label{eq:mathQ}
		\end{equation}
		Since $\hat{\beta}_n$, $\hat{\alpha}$ are estimated independently, it holds that $E(\hat{\varepsilon}_i\hat{\varepsilon}_j)=0$ for $i\neq j$.  Moreover, each $\varepsilon_i$ follows a standard normal distribution, so that $E(\varepsilon_i^2)=1$. Therefore,
		\[
		E(\hat{\varepsilon}^\top A \hat{\varepsilon}) = \operatorname{tr}(A)+\sum_{i=0}^{m+1}(E(\hat{\varepsilon}_i^2)-E(\varepsilon_i^2))A_{i,i}.
		\]
The integration-by-parts formula gives
\begin{align*}
E(\hat{\varepsilon}_i^2)-E(\varepsilon_i^2)&=2\int_{B_N}^{+\infty}(B_N^2-x^2)\frac{e^{-x^2/2}}{\sqrt{2\pi}}dx\\
&=2B_N^2\int_{B_N}^{+\infty}\frac{e^{-x^2/2}}{\sqrt{2\pi}}dx-2\int_{B_N}^{+\infty}\frac{e^{-x^2/2}}{\sqrt{2\pi}}dx-\frac{2B_Ne^{-B_N^2/2}}{{\sqrt{2\pi}}}\\
&=O(e^{-B_N^2/2}/B_N)=O(N^{-q} e^{-N^{2q}/2}),
\end{align*} 
where the standard normal tail satisfies
\[
\int_{B_N}^{+\infty}\frac{e^{-x^2/2}}{\sqrt{2\pi}}dx=\frac{e^{-B_N^2/2}}{\sqrt{2\pi}B_N}(1+O(\frac{1}{B_N^2})),
\]
and $B_N\asymp N^q$, $0<q<1/2$, by Assumption \ref{assu:1.1}. Therefore, 
\begin{equation}
E(\hat{\varepsilon}^\top A \hat{\varepsilon}) = \operatorname{tr}(A)+o(N^{-k}), \label{eq:epsilon}
\end{equation}
for any $k>0$. Combining equations \eqref{eq:Edeltag}, \eqref{eq:mathQ} and \eqref{eq:epsilon}, we obtain
\[
E(\Delta g) = \frac{\operatorname{tr}(A)}{2N} + O(N^{-3/2}).
\]
Similarly, using the independence of the components of \(\hat{\varepsilon}\) and the corresponding moment estimates, we obtain, for any $k>0$,
\[
\Var(\hat{\varepsilon}^\top A \hat{\varepsilon}) = 2\operatorname{tr}(A^2)+o(N^{-k}),	
\]
and hence
\[
\Var(\Delta g) = \frac{\operatorname{tr}(A^2)}{2N^2} + o(N^{-2}).
\]

		Finally, multiplying $\Delta g$ by $N$ yields
		\[
		N\Delta g = \frac12\hat{\varepsilon}^\top A \hat{\varepsilon} + N(S + T + R_g).
		\]
		As $\hat{\varepsilon}^\top A \hat{\varepsilon}$ converges to $\varepsilon^\top A\varepsilon$ in distribution, and $N(S + T + R_g)$ converges to zero in $L^1$ norm, by Slutsky's theorem, we have $N\Delta g \xrightarrow{d} \frac{1}{2}\,\varepsilon^\top A\varepsilon$. This completes the proof.
	\end{proof}
	\begin{remark}
	Although the truncated normal random variable $\hat{\varepsilon}_i$ depends on the parameter $q$, Theorem \ref{the:erjie} shows that this dependence does not affect the convergence rate, since the resulting truncation error is of order $O(N^{-q} e^{-N^{2q}/2})$, which is asymptotically negligible compared with the polynomial convergence terms.
	\end{remark}
	
	\section{Infinite-Dimensional Error Analysis}\label{sect4}
	
	We have analyzed the computational efficiency loss under a fixed truncation $m$. However, the original optimization problem \eqref{eq:gp} is infinite-dimensional. In this section, we extend the error analysis to this setting under mild assumptions on the sequences $\{t_n\}$ and $\{\beta_n\}$.
	
	For a time-discrete scheme with strong convergence order $p>1/2$, the second moment of the increment satisfies $\beta_n = O(2^{-2pn})$, and its tail exhibits an asymptotically geometric decay. Motivated by these properties, we propose the following assumption.
	
	\begin{assumption} The sequences $\{t_n\}$ and $\{\beta_n\}$ satisfy
		\label{assu:3.1}
		\begin{equation*}
			\beta_{n} = C_{\beta}\cdot2^{-2pn}(1+O(2^{-\rho n})),\quad p>1/2, \quad \rho>0, \quad t_{n} = 2^n,
		\end{equation*}
		where $C_{\beta}$ is a constant.
	\end{assumption}
Assumption \ref{assu:3.1} introduces the parameter $\rho$ to characterize the rate of the non-leading-order term. As shown later, the leading-order truncation error term cancels, resulting in a higher-order convergence rate that depends on $\rho$.
	
	Under Assumption \ref{assu:3.1}, \citet{zheng2026optimal} proposed an algorithm to approximate the optimal distribution. Specifically, they approximate it by
\begin{align*}
&\hat{p}_n=\sqrt{\frac{\hat{\beta}_n}{\hat{\alpha}^2+\hat{c}t_n}},\quad n=0,1,2,...,m\\
&\hat{p}_{n+1}=2^{-(2p+1)/2}\cdot\hat{p}_{n},\quad n=m,m+1,...
\end{align*}
where $\hat{c}$ is the solution to the equation
\begin{equation}
\sum_{n=0}^{m}\sqrt{\frac{\hat{\beta}_n}{\hat{\alpha}^2+\hat{c}t_n}}+\frac{2^{-(2p+1)/2}}{1-2^{-(2p+1)/2}}\sqrt{\frac{\hat{\beta}_m}{\hat{\alpha}^2+\hat{c}t_m}}=1.\label{eqPn}
\end{equation}
This algorithm guarantees that $\sum_{n=0}^{\infty}\hat{p}_n=1$.	
	
	Our goal is to estimate 
	\[
	g(\hat{p})-g(p^*),
	\]
	where $g(\hat{p})$ is the computational efficiency with $\hat{p}:=(\hat{p}_0,...,\hat{p}_n,...)$ from the above approximation, and $g(p^*)$ is the optimal computational efficiency. Recall that
\[
g(p^*)=\left(\sum_{n=0}^{\infty}\frac{\beta_n}{p_n^*} - \alpha^2\right)\left(\sum_{n=0}^{\infty} t_n p_n^*\right),
\]
where 
\[
p_n^*=\sqrt{\frac{\beta_n}{\alpha^2+c^*t_n}}
\]
for all $n$, and $c^*$ is the solution to the equation
\[
\sum_{n=0}^{\infty}\sqrt{\frac{\beta_n}{\alpha^2+c^*t_n}}=1.
\]

	For notational convenience, let
	\[
	\gamma = \frac{2^{-(p-1/2)}}{1-2^{-(p-1/2)}}, \quad s =\frac{2^{-(p+1/2)}}{1-2^{-(p+1/2)}}.
	\]
By the definition of $\hat{p}$, we have
	\begin{align*}
		g(\hat{p})
		&=\left(\sum_{n=0}^{m}\frac{\beta_n}{\hat{p}_n}+\sum_{n=m+1}^{\infty}\frac{\beta_n}{\hat{p}_n} - \alpha^2\right)\left(\sum_{n=0}^{m} t_n \hat{p}_n+\sum_{n=m+1}^{\infty} t_n \hat{p}_n\right),\\
		&=\left(\sum_{n=0}^{m}\frac{\beta_n}{\hat{p}_n}+\frac{1}{\hat{p}_m}\sum_{n=m+1}^{\infty} \frac{\beta_n}{2^{-(n-m)(p+1/2)}} - \alpha^2\right)\left(\sum_{n=0}^{m} t_n \hat{p}_n+\gamma\cdot t_m \hat{p}_m\right).\\
	\end{align*}
From equation (\ref{eqPn}), we get
	\[
	\sum_{n=0}^{m-1}\hat{p}_n+(1+s)\hat{p}_m=1.
	\]
However, we observe that
\[
(1+s)\hat{p}_m=(1+s)\sqrt{\frac{\hat{\beta}_m}{\hat{\alpha}^2+\hat{c}t_m}}\neq\sqrt{\frac{(1+\gamma)\hat{\beta}_m}{\hat{\alpha}^2+\hat{c}(1+\gamma)t_m}}.
\]
Therefore, the infinite-dimensional problem cannot be directly transformed into the corresponding finite-dimensional problem.
	
To address this difficulty, we introduce an intermediate probability mass function \(p(m)\) and decompose the efficiency loss as
\begin{equation}
	g(\hat{p})-g(p^*)=(g(\hat{p})-g(p(m)))+(g(p(m))-g(p^*)),\label{eq4.1}
\end{equation}	
where $p(m)$ is defined in an anaglous manner to $\hat{p}$, except that the estimated parameters $\hat{\beta}_n$ and $\hat{\alpha}$ are replaced by their exact values. Specifically,
\begin{align*}
&p_n(m)=\sqrt{\frac{\beta_n}{\alpha^2+c(m)t_n}},\quad n=0,1,2,...,m\\
&p_{n+1}(m)=2^{-(2p+1)/2}\cdot p_{n}(m),\quad n=m,m+1,...
\end{align*}
where $c(m)$ satisfies the equation
\[
\sum_{n=0}^{m}\sqrt{\frac{\beta_n}{\alpha^2+c(m)t_n}}+s\cdot\sqrt{\frac{\beta_m}{\alpha^2+c(m)t_m}}=1.
\]	
As with \(\hat p\), this construction ensures that $\sum_{n=0}^{\infty}p_n(m)=1$.

We observe that the first term $g(\hat{p})-g(p(m)$ in \eqref{eq4.1} contains the error arising from parameter estimation, whereas the second term $g(p(m))-g(p^*)$ is solely due to truncation. We first analyse the latter.

\begin{lemma}\label{lem4.1}
Suppose that Assumption \ref{assu:3.1} is satisfied. Then it holds that
\[
|g(p(m))-g(p^*)|\leq C 2^{-m(p-1/2+2\eta)},
\]
where $\eta=\min\{\rho,1\}$. 
\end{lemma}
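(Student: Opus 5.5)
The plan is a second-order perturbation argument around $p^*$, measured in relative terms: the first-order term should vanish by optimality, leaving a quadratic form whose weights decay like $2^{-(p-1/2)n}$. Write $p_n(m)=p_n^*(1+\delta_n)$ for all $n\ge 0$. The steps are: (i) control $c(m)-c^*$; (ii) bound $\delta_n$ separately for $n\le m$ and $n>m$; (iii) expand $g(p(m))-g(p^*)$ so that the linear terms cancel, and bound what is left.

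\emph{Step (ii), tail indices.} Under Assumption \ref{assu:3.1} with $t_n=2^n$,
\[
p_n^*=\sqrt{\frac{C_\beta}{c^*}}\,2^{-(p+1/2)n}\bigl(1+O(2^{-\rho n})+O(2^{-n})\bigr)=\kappa\,2^{-(p+1/2)n}\bigl(1+O(2^{-\eta n})\bigr),
\]
where the $O(2^{-n})$ comes from $\alpha^2/(c^*t_n)$. The same expansion holds for $p_m(m)$, with $c(m)$ in place of $c^*$. Since $p_n(m)=p_m(m)2^{-(n-m)(p+1/2)}$ for $n>m$, I expect
\[
\delta_n = O(2^{-\eta m})+O\bigl(|c(m)-c^*|\bigr)\qquad\text{uniformly in } n>m .
\]
This is exactly where $\eta=\min\{\rho,1\}$ enters.

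\emph{Steps (i) and (ii), head indices.} Both $p(m)$ and $p^*$ sum to one, and $c\mapsto\sum_n\sqrt{\beta_n/(\alpha^2+ct_n)}$ has derivative bounded away from zero near $c^*$. Comparing the two normalisation equations, the discrepancy equals the tail mismatch $\sum_{n>m}p_n^*\delta_n^{\mathrm{tail}}=O(2^{-(p+1/2)m}2^{-\eta m})$. I therefore expect $|c(m)-c^*|\le C2^{-m(p+1/2+\eta)}$, and consequently $\delta_n=O(|c(m)-c^*|)$ uniformly for $n\le m$, which is far smaller than the tail deviations.

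\emph{Step (iii), expansion.} Write $V(p)=\sum\beta_n/p_n-\alpha^2$ and $T(p)=\sum t_np_n$. Use the exact identity
\[
\frac{1}{1+\delta}=1-\delta+\frac{\delta^2}{1+\delta}
\]
to obtain $V(p(m))=V^*-L_V+Q_V$ and $T(p(m))=T^*+L_T$, where
\[
L_V=\sum_n\frac{\beta_n}{p_n^*}\delta_n,\qquad L_T=\sum_n t_np_n^*\delta_n,\qquad |Q_V|\le C\sum_n\frac{\beta_n}{p_n^*}\delta_n^2 .
\]
The first-order optimality conditions for \eqref{eq:gp} are the infinite analogue of Lemma \ref{lem:optimal}: $-T^*\beta_n/p_n^{*2}+V^*t_n=\lambda$ for all $n$. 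Combined with $\sum_n p_n^*\delta_n=0$, they give $-T^*L_V+V^*L_T=0$, so
\[
g(p(m))-g(p^*)=T^*Q_V-L_VL_T+Q_VL_T .
\]
The weights satisfy $\beta_n/p_n^*\asymp 2^{-(p-1/2)n}$ and $t_np_n^*\asymp 2^{-(p-1/2)n}$. Summing $\delta_n^2$ against these weights over $n>m$ gives $C2^{-(p-1/2)m}2^{-2\eta m}$. The head part contributes only $O(|c(m)-c^*|^2)$, which is negligible. For the cross term, $L_V$ and $L_T$ are each $O(2^{-(p-1/2)m-\eta m})$, since the head contributions are $O(|c(m)-c^*|)$. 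Hence $|L_VL_T|\le C2^{-2(p-1/2)m-2\eta m}$, which is dominated because $p>1/2$.

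I expect the main obstacle to be Step (iii): ensuring that the linear terms cancel exactly in the infinite-dimensional setting, not just to leading order. This requires the optimality condition for \eqref{eq:gp} to hold coordinatewise with a single multiplier $\lambda$, and it requires uniform control of $1/(1+\delta_n)$. The first I would verify directly from \eqref{eq:pn*}, since $p_n^*$ is explicit. The second follows once $\sup_n|\delta_n|\to0$ as $m\to\infty$; for small $m$ the bound is trivial after enlarging $C$.
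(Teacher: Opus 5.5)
Your proposal is correct and follows essentially the same route as the paper. It uses the same estimate $|c(m)-c^*|\le C2^{-m(p+1/2+\eta)}$ from comparing the normalisation equations, the same head/tail deviation bounds, the same cancellation of the linear term via optimality and $\sum_n(p_n(m)-p_n^*)=0$, and the same weighted quadratic sum, with the cross term $L_VL_T$ dominated because $p>1/2$. The differences are only cosmetic: you use an exact algebraic expansion in the relative deviations $\delta_n$ rather than the paper's Taylor expansion of $\phi(\lambda)=g(p^*+\lambda(p(m)-p^*))$ at an intermediate point, and your direct check of optimality does work, since $\beta_n/p_n^{*2}=\alpha^2+c^*t_n$ and $\sum_np_n^*=1$ give $V^*=c^*T^*$ and hence the single multiplier $\lambda=-\alpha^2T^*$.
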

\begin{proof}
Let $p=p^*+\lambda(p(m)-p^*)$, where $\lambda\in [0,1]$, and define 
\[
\phi(\lambda)=g(p).
\]
Since $\phi(0)=g(p^*)$ and $\phi(1)=g(p(m))$, we have
\[
g(p(m))-g(p^*)=\phi(1)-\phi(0).
\]
Applying the Taylor expansion of $\phi$ around $\lambda=0$, we obtain
\[
\phi(1)-\phi(0)=\phi^{'}(0)+\phi^{''}(\epsilon)
\]
where $\epsilon\in (0,1)$. 

For the first derivative term $\phi^{'}(0)$, direct calculation gives
\[
\phi^{'}(0)=-\sum_{n=0}^{\infty}\frac{\beta_n}{p_n^{*2}}(p_n(m)-p_n^*)\sum_{n=0}^{\infty} t_n p_n^*+\left(\sum_{n=0}^{\infty}\frac{\beta_n}{p_n^*} - \alpha^2\right)\sum_{n=0}^{\infty}t_n(p_n(m)-p_n^*).
\]
The interchange of differentiation and summation is justified, because the series obtained by term-by-term differentiation of the series defining $\phi(\lambda)$ converges uniformly on $[0,1]$. Let $p_0=1-\sum_{n\neq 0}p_n$. Since $p^*$ is the minimizer of $g$ under the constraint $\sum_{n=0}^{\infty}p^*_n=1$, the first-order optimality condition implies 
\[
\frac{\partial g(p)}{\partial p_i}|_{p=p^*}=0, \quad i\geq 1.
\] 
Consequently,
\[
-\frac{\beta_i}{p_i^{*2}}\sum_{n=0}^{\infty} t_n p_n^*+\left(\sum_{n=0}^{\infty}\frac{\beta_n}{p_n^*} - \alpha^2\right)t_i=-\frac{\beta_0}{p_0^{*2}}\sum_{n=0}^{\infty} t_n p_n^*+\left(\sum_{n=0}^{\infty}\frac{\beta_n}{p_n^*} - \alpha^2\right)t_0,
\]
for any $i\geq 1$. Therefore,
\[
\phi^{'}(0)=-\left(\frac{\beta_0}{p_0^{*2}}\sum_{n=0}^{\infty} t_n p_n^*+\left(\sum_{n=0}^{\infty}\frac{\beta_n}{p_n^*} - \alpha^2\right)t_0\right)\sum_{i=0}^{\infty}(p_i(m)-p_i^*)=0,
\]
because $\sum_{i=0}^{\infty}p_i(m)=\sum_{i=0}^{\infty}p_i^*=1$. 

For the second derivative term $\phi^{''}(\epsilon)$, we have
\begin{equation}
\phi^{''}(\epsilon)=2\sum_{n=0}^{\infty}\frac{\beta_n}{p_n^{3}}(p_n(m)-p_n^*)^2\sum_{n=0}^{\infty} t_n p_n-2\sum_{n=0}^{\infty}\frac{\beta_n}{p_n^{2}}(p_n(m)-p_n^*)\sum_{n=0}^{\infty}t_n(p_n(m)-p_n^*),\label{phi2}
\end{equation}
where $p=p^*+\epsilon (p(m)-p^*)$. The interchange of differentiation and summation is valid, since the series of second derivatives also converges uniformly on $[0,1]$. The remaining proof consists of estimating the two terms in (\ref{phi2}). Under Assumption \ref{assu:3.1}, we get
\[
q_n(c^*):=\sqrt{\frac{\beta_n}{\alpha^2+c^*t_n}}=C_q\sqrt{ 2^{-(2p+1)n}\frac{1+O(2^{-\rho n})}{1+O(2^{-n})}}=C_q2^{-n(p+1/2)}\left(1+O(2^{-\eta n})\right),
\]
where $\eta=\min\{\rho,1\}$ and $C_q$ is a constant. Define
\[
S_m(c)=\sum_{n=0}^{m}q_n(c)+s\cdot q_m(c),\quad T_m(c)=\sum_{n=m+1}^{\infty}q_n(c),
\]
where $s=\frac{2^{-(p+1/2)}}{1-2^{-(p+1/2)}}$. Then, we have
\[
S_m(c^*(m))-S_m(c^*)=(c^*(m)-c^*)S_m^{'}(\xi),
\]
where $\xi$ is between $c^*$ and $c^*(m)$. Since $S_m(c^*(m))=1$ and $S_m(c^*)=1-T_m(c^*)+s\cdot q_m(c^*)$, we obtain
\begin{equation}
	|\Delta c|=|c^*(m)-c^*|=\left|\frac{T_m(c^*)-s\cdot q_m(c^*)}{S_m^{'}(\xi)}\right|\leq \left|\frac{T_m(c^*)-s\cdot q_m(c^*)}{p_0^{'}(\xi)}\right|\leq C 2^{-m(p+1/2+\eta)},\label{eq4.2}
\end{equation}
where
\[
T_m(c^*)=s q_m(c^*)+O\left(2^{-m(p+1/2+\eta)}\right).
\]

Using this estimate, for $n\leq m$, it holds that
\begin{align}
|p_n(m)-p_n^*|&=\left|\sqrt{\frac{\beta_n}{\alpha^2+c(m)t_n}}-\sqrt{\frac{\beta_n}{\alpha^2+c^*t_n}}\right|\nonumber\\
&=\left|\frac{\sqrt{\beta_n}t_n\Delta c}{(\alpha^2+c(m)t_n)\sqrt{\alpha^2+c^*t_n}+(\alpha^2+c^*t_n)\sqrt{\alpha^2+c(m)t_n}}\right|\nonumber\\
&\leq C\sqrt{\frac{\beta_n}{t_n}}|\Delta c|=O(2^{-(p+1/2)n-(p+1/2+\eta)m}).\label{pnm1}
\end{align}
For $n>m$, since $p_{n+1}(m)=2^{-(2p+1)/2}p_{n}(m)$, we have
\[
p_n(m)=\sqrt{\frac{2^{-2p(n-m)}\beta_m}{2^{n-m}\alpha^2+c(m)t_n}},
\]
and a similar calculation gives
\begin{align}
&|p_n(m)-p_n^*|\nonumber\\
&=\frac{\left|(2^{-2p(n-m)}\beta_m-2^{n-m}\beta_n)\alpha^2+(2^{-2p(n-m)}c^*\beta_m-c(m)\beta_n)t_n\right|}{\sqrt{(2^{n-m}\alpha^2+c(m)t_n)2^{-2p(n-m)}\beta_m}(\alpha^2+c^*t_n)+\sqrt{(\alpha^2+c^*t_n)\beta_n}(2^{n-m}\alpha^2+c(m)t_n)}\label{eq4.3}
\end{align}
Let's focus on the term (\ref{eq4.3}). As $2^{-2p(n-m)}\beta_m>\beta_n$, the denominator of (\ref{eq4.3}) is larger than $C\sqrt{\beta_n t_n^3}$, for some $C>0$. Similarily, since $p>1/2$, we get $2^{n-m}\beta_n>2^{-2p(n-m)}\beta_m$, and it follows that
\[
\left|(2^{-2p(n-m)}\beta_m-2^{n-m}\beta_n)\alpha^2\right|\leq C2^{n-m}\beta_n.
\]
Therefore, we have
\begin{align}
&|p_n(m)-p_n^*|\nonumber\\
&\leq C\frac{2^{n-m}\beta_n+\left|c(m)(2^{-2p(n-m)}\beta_m-\beta_n)t_n\right|+\left|(2^{-2p(n-m)}\beta_m \Delta c) t_n\right|}{\sqrt{\beta_n t_n^3}}\nonumber\\
&\leq C\frac{2^{-(2p-1)n-m}+2^{-(2p-1)n-\rho m}+2^{-(2p-1)n-m(p+1/2+\eta)}}{2^{(-p+3/2)n}}\leq C2^{-(p+1/2)n-\eta m},\label{pnm2}
\end{align}
for any $n>m$. 

Subsituting these upper bounds for $|p_n(m)-p_n^*|$ into (\ref{phi2}), together with  $\beta_n=O(2^{-2pn})$ and $p_n=O(2^{-(p+1/2)n})$, we obtain
\begin{align*}
\sum_{n=0}^{\infty}\frac{\beta_n}{p_n^{3}}(p_n(m)-p_n^*)^2&=\sum_{n=0}^{m}\frac{\beta_n}{p_n^{3}}(p_n(m)-p_n^*)^2+\sum_{n=m+1}^{\infty}\frac{\beta_n}{p_n^{3}}(p_n(m)-p_n^*)^2\\
&\leq C2^{-(2p+1+2\eta)m}+C2^{-(p-1/2+2\eta)m}\leq C2^{-(p-1/2+2\eta)m}.
\end{align*}
and analogously
\[
\sum_{n=0}^{\infty}\frac{\beta_n}{p_n^{2}}(p_n(m)-p_n^*)\leq C2^{-(p-1/2+\eta)m}, \quad \sum_{n=0}^{\infty}t_n(p_n(m)-p_n^*)\leq C2^{-(p-1/2+\eta)m}.
\]
Consequently,
\[
|\phi^{''}(\epsilon)|\leq C2^{-(p-1/2+2\eta)m}+C2^{-(p-1/2+\eta)m}2^{-(p-1/2+\eta)m}\leq C2^{-(p-1/2+2\eta)m}.
\]
Since
\[
g(p(m))-g(p^*)=\phi^{'}(0)+\phi^{''}(\epsilon),
\]
where $\phi^{'}(0)=0$, the proof is complete.
\end{proof}

\begin{remark}
For a time-discrete scheme, the non-leading-order term typically satisfies $\rho=1$. In this case, Lemma \ref{lem4.1} yields
\[
|g(p(m))-g(p^*)|\leq C 2^{-m(p+3/2)}.
\]
\end{remark}
\begin{remark}
In Lemma \ref{lem4.1}, the convergence rate $-m(p-1/2+2\eta)$ contains the term $2\eta$, because the leading-order term in the expansion of $c(m)-c^*$ is cancelled. However, if we repace the infinite-dimensional objective function $g(p(m))$ with the corresponding
finite-dimensional approximation 
\[
g_m(p(m))=\left(\sum_{n=0}^{m}\frac{\beta_n}{p_n(m)}+\gamma\frac{\beta_m}{p_m(m)}-\alpha^2\right)\left(\sum_{n=0}^{m}t_np_n(m)+\gamma t_mp_m^*(m)\right),
\] 
where the tail is approximated only by its leading-order asymptotic term, we may obtain only $|g_m(p(m))-g(p^*)|\leq C 2^{-m(p-1/2+\eta)}$. This explains why directly approximating the infinite-dimensional problem by a finite truncation may lose one order of accuracy.
\end{remark}

\begin{lemma}\label{lem4.2}
Suppose that Assumptions \ref{assu:1.1} and \ref{assu:3.1} are satisfied, and that $\sigma_n/\beta_n$ is uniformly bounded, i.e., there exists a constant $L>0$, such that $\sigma_n/\beta_n<L$ for all $n\geq 0$. Then there exist an integer $N_0$, and positive constants $c_0, C_0, c_1, C_1$, all independent of $m$ and $N$, such that for all $m\geq 0$ and $N>N_0$, it holds that
\[
c_0 2^{-2pn}\leq \hat{\beta}_n\leq C_0 2^{-2pn}, \quad  0\leq n\leq m,
\]
and
\[
c_1 2^{-(p+1/2)n}\leq \hat{p}_n\leq C_1 2^{-(p+1/2)n}, \quad n\geq 0.
\]
\end{lemma}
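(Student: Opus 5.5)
The plan has two steps: first get the bounds on $\hat\beta_n$ directly from Assumption \ref{assu:1.1}, then deduce the bounds on $\hat p_n$ by showing that the estimated constant $\hat c$ stays in a compact interval bounded away from zero.

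\textbf{Bounds on $\hat\beta_n$.} By Assumption \ref{assu:1.1}, $|\hat\varepsilon_n|\le B_N$, so
\[
\hat\beta_n=\beta_n\Big(1+\frac{\sigma_n}{\beta_n}\frac{\hat\varepsilon_n}{\sqrt N}\Big),\qquad
\Big|\frac{\sigma_n}{\beta_n}\frac{\hat\varepsilon_n}{\sqrt N}\Big|\le \frac{L B_N}{\sqrt N}.
\]
Since $B_N\asymp N^q$ with $q<1/2$, the right-hand side tends to zero. Choose $N_0$, independent of $m$, so that it is at most $1/2$ for $N>N_0$. Then $\frac12\beta_n\le\hat\beta_n\le\frac32\beta_n$ for every $n$. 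Assumption \ref{assu:3.1} gives $\beta_n=C_\beta 2^{-2pn}(1+O(2^{-\rho n}))$ with $\beta_n>0$, so $\beta_n\asymp 2^{-2pn}$ uniformly in $n$. This yields the first pair of bounds.

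The same argument applies to $\hat\alpha$: since $\sigma_\alpha B_N/\sqrt N\to 0$, we may take $\alpha/2\le\hat\alpha\le 3\alpha/2$ after enlarging $N_0$.

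\textbf{Bounds on $\hat p_n$.} For $n\le m$,
\[
\hat p_n=\sqrt{\hat\beta_n/(\hat\alpha^2+\hat c\,2^n)}.
\]
It therefore suffices to show $\hat c\in[c_-,c_+]$ with $0<c_-<c_+$ independent of $m$ and $N$. Then
\[
\hat\alpha^2+\hat c\,2^n\asymp 2^n
\]
uniformly, giving $\hat p_n\asymp 2^{-(p+1/2)n}$. For $n>m$, the recursion $\hat p_n=2^{-(p+1/2)(n-m)}\hat p_m$ transfers the bounds from $\hat p_m$ with the same constants.

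\textbf{Bounding $\hat c$.} Let $\hat F_m(c)$ denote the left side of \eqref{eqPn}; it is strictly decreasing in $c$. I use the bounds above to sandwich $\hat F_m$ between $m$-independent functions
\[
\underline F(c)=\sum_n \sqrt{c_0 2^{-2pn}/(C_\alpha+c2^n)},\qquad \overline F(c)=(1+s)\sum_n\sqrt{C_0 2^{-2pn}/(c_\alpha+c2^n)}.
\]

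\emph{Upper bound on $\hat c$.} For the upper bound, $\hat F_m(c)\le \overline F(c)$, since the tail weight $s$ times the $m$-th term is at most $s$ times a sum term. As $c\to\infty$, $\overline F(c)\to 0$ by dominated convergence, uniformly in $m$. Pick $c_+$ with $\overline F(c_+)<1$; then $\hat c<c_+$.

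\emph{Lower bound on $\hat c$.} This is the main obstacle. At $c=0$ we need $\hat F_m(0)>1$ by a margin, uniformly in $m$. But $\hat F_m(0)\approx\sum_n\sqrt{\hat\beta_n}/\hat\alpha$, and the constant bounds $c_0,C_0$ are too crude to guarantee this exceeds $1$. I would instead work with perturbations. Let $F_m(c)$ be the exact-parameter analogue, i.e.\ the left side of the equation defining $c(m)$. Then
\[
|\hat F_m(c)-F_m(c)|\le C\,\frac{B_N}{\sqrt N}\sum_n 2^{-(p+1/2)n}\to0
\]
uniformly in $m$ and in $c\ge0$, using the relative error bound on $\hat\beta_n$ and $\hat\alpha$.

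Next, the exact $c(m)\to c^*>0$ by \eqref{eq4.2}, so $c(m)\ge 2c_-$ for all $m$ with some $c_->0$. (For small $m$, positivity follows since each $c(m)$ is positive, assuming $\sum\sqrt{\beta_n}/\alpha>1$, which holds because $c^*$ exists.) Finally, $F_m$ has slope bounded away from zero on compacts uniformly in $m$, since $|F_m'|\ge|\partial_c q_0|$. Hence $F_m(c_-)\ge 1+\delta$ for some $\delta>0$ independent of $m$. Taking $N_0$ large enough that the perturbation is below $\delta$ gives $\hat F_m(c_-)>1$, hence $\hat c>c_-$. This completes the argument.
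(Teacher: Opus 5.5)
Your bounds on $\hat\beta_n$ and $\hat\alpha$ are sound, as is the upper bound $\hat c<c_+$ obtained by comparison with $\overline F$. The overall structure matches the paper's. The gap is the claim that $c(m)\ge 2c_->0$ for \emph{all} $m\ge0$, and hence that $\hat c\ge c_-$ uniformly in $m$. Positivity of $c(m)$ is equivalent to $F_m(0)>1$, that is, $\sum_{n=0}^m\sqrt{\beta_n}+s\sqrt{\beta_m}>\alpha$. The existence of $c^*>0$ does not give this; it only gives $\sum_{n=0}^\infty\sqrt{\beta_n}>\alpha$.

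For instance, with $m=0$ the defining equation is $(1+s)\sqrt{\beta_0/(\alpha^2+c)}=1$, so $c(0)=(1+s)^2\beta_0-\alpha^2$. Take $p=1$, $\alpha=1$, $\beta_n=0.36\cdot4^{-n}$, which satisfies Assumption \ref{assu:3.1}. Then $(1+s)\sqrt{\beta_0}\approx0.93<1$, while $\sum_n\sqrt{\beta_n}=1.2>1$. So $c(0)<0$, and for large $N$ also $\hat c=(1+s)^2\hat\beta_0-\hat\alpha^2<0$. The two-sided bound $\hat c\in[c_-,c_+]$ with $c_->0$ independent of $m$ is therefore false, and your upper bound on $\hat p_n$ is not established for small $m$.

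The repair is what the paper does: claim the lower bound on $\hat c$ only for $m\ge m_0$. Fix $m_0$ so that $\sum_{n=0}^{m_0}\sqrt{\beta_n/(\alpha^2+c^*t_n/2)}>1$; this is possible because the full series at $c^*/2$ exceeds $1$. Partial sums are monotone in $m$, and your additive perturbation bound is uniform in $m$. Together these give $\hat F_m(c^*/2)>1$, hence $\hat c>c^*/2$, for all $m\ge m_0$ and $N>N_0$. This route needs neither \eqref{eq4.2} nor the slope argument.

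For the finitely many $m<m_0$, use only $\hat p_n<1$ and the tail recursion: $\hat p_n\le2^{-(p+1/2)(n-m)}\le2^{(p+1/2)m_0}2^{-(p+1/2)n}$, then enlarge $C_1$. The lower bound on $\hat p_n$ needs only $\hat c\le c_+$, which your comparison with $\overline F$ already gives for every $m$, whatever the sign of $\hat c$.

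A minor point: uniformly over $c\ge0$, your perturbation estimate should use $q_n(c)\le\sqrt{\beta_n}/\alpha\asymp2^{-pn}$ rather than $2^{-(p+1/2)n}$. This is harmless, since the series still converges.
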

\begin{proof}
Assumption \ref{assu:1.1} implies that 
\[
\sup_{0\leq n\leq m}\frac{|\hat{\varepsilon}_n|}{\sqrt{N}}\leq \frac{B_N}{\sqrt{N}}\rightarrow 0
\]
as $N\rightarrow +\infty$. Since
$\sigma_n/\beta_n<L$, we have
\[
1-l_N\leq\hat{\beta}_n/\beta_n\leq 1+l_N, \quad 0\leq n\leq m,
\]
where $l_N\in (0,1)$, and $l_N\rightarrow 0$ as $N\rightarrow +\infty$. Consequently, as $\beta_n\asymp 2^{-2np}$, there exist positive constants $c_0$ and $C_0$, all independent of \(m\), such that
\[
c_0 2^{-2pn}\leq \hat{\beta}_n\leq C_0 2^{-2pn}, \quad 0\leq n\leq m.
\]

Next, we focus on the lower bound for $\hat{p}_n$. Recall that $\hat{p}_n=\sqrt{\frac{\hat{\beta}_n}{\hat{\alpha}^2+\hat{c} t_n}}$ for any $0\leq n\leq m$. If $\hat{c}>0$, it holds that 
\[
1=\sum_{n=0}^{m}\sqrt{\frac{\hat{\beta}_n}{\hat{\alpha}^2+\hat{c} t_n}}+s\sqrt{\frac{\hat{\beta}_m}{\hat{\alpha}^2+\hat{c} t_m}}\leq \sum_{n=0}^{m}\sqrt{\frac{\hat{\beta}_n}{\hat{c} t_n}}+s\sqrt{\frac{\hat{\beta}_m}{\hat{c} t_m}}.
\]
Using the upper bound for $\hat{\beta}_n$, together with $t_n=2^n$, we obtain
\[
\hat{c}\leq \left(\sum_{n=0}^{m}\sqrt{\frac{\hat{\beta}_n}{t_n}}+s\sqrt{\frac{\hat{\beta}_m}{t_m}}\right)^2\leq \frac{C_0}{\left(1-2^{-(p+1/2)}\right)^2}=:c_{up}.
\]
This bound also holds when $\hat {c}\le0$. Since $\hat{\alpha}$ is bounded by Assumption \ref{assu:1.1}, i.e, $\alpha_{low}<\hat{\alpha}<\alpha_{up}$, it follows that
\[
\hat{p}_n\geq\sqrt{\frac{\hat{\beta}_n}{\alpha_{up}^2+c_{up} t_n}}\geq c_1 2^{-(p+1/2)n}, \quad 0\leq n\leq m,
\]
where $c_1$ is independent of $m$ and $N$. For $n>m$, the same lower bound follows from the tail recursion $\hat{p}_{n+1}=2^{-(p+1/2)}\hat{p}_{n}$. 

To establish the upper bound for $\hat{p}_n$, let $c^*$ be the solution to 
\[
\sum_{n=0}^{\infty}\sqrt{\frac{\beta_n}{\alpha^2+c^* t_n}}=1.
\]
Theorem 4 in \citep{rhee2015unbiased} implies that $c^*>0$. Define
\[
\mathcal{S}_m(c):=\sum_{n=0}^{m}\sqrt{\frac{\beta_n}{\alpha^2+c t_n}}, \quad \hat{S}_m(c):=\sum_{n=0}^{m}\sqrt{\frac{\hat{\beta}_n}{\hat{\alpha}^2+c t_n}}+s\sqrt{\frac{\hat{\beta}_m}{\hat{\alpha}^2+c t_m}},
\]
which are monotone decreasing functions of $c>0$. Since
\[
\sum_{n=0}^{\infty}\sqrt{\frac{\beta_n}{\alpha^2+\frac{c^*}{2} t_n}}>1,
\]
there exists a fixed integer $m_0>0$, such that $\mathcal{S}_{m_0}(c^*/2)>1$. Because $l_N\rightarrow 0$, we can increase $N_0$, independently of $m$, so that
\[
\frac{\sqrt{1-l_N}}{1+l_N} \mathcal{S}_{m_0}(c^*/2)>1, \quad N>N_0,
\]
Thus, for \(m\ge m_0\) and \(N\ge N_0\), 
\[
\hat{S}_m(c^*/2)>\frac{\sqrt{1-l_N}}{1+l_N} \mathcal{S}_m(c^*/2)>\frac{\sqrt{1-l_N}}{1+l_N} \mathcal{S}_{m_0}(c^*/2)>1.
\]
Since $\hat{S}_m(\hat{c})=1$, strict monotonicity yields
\[
\hat{c}>c^*/2=:c_{low}.
\]
Therefore, for \(m\ge m_0\) and \(0\le n\le m\), we obtain
\[
\hat{p}_n\leq \sqrt{\frac{\hat{\beta}_n}{\alpha_{low}^2+c_{low} t_n}}\leq C_1 2^{-(p+1/2)n},
\]
The tail recursion extends this bound to $n>m$. It remains to consider \(m<m_0\). Since $\hat{p}_n<1$, we simply choose a sufficiently large constant $C_1$, such that $C_1 2^{-(p+1/2)m_0}>1$. Then, we have $\hat{p}_n<C_1 2^{-(p+1/2)n}$ for all $n\geq 0$. All constants and the threshold $N_0$ are independent of $m$ and $N$. The proof is complete.
\end{proof}

\begin{lemma}\label{lem4.3}
Under the assumptions of Lemma \ref{lem4.2}, we have 
\[
|E(g(\hat{p})-g(p(m)))|\leq C N^{-1},
\]
where $C$ is independent of $m$.
\end{lemma}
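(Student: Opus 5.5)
Both $\hat p$ and $p(m)$ are built by the same map $\Phi_m$: from a parameter vector $\vartheta=(b_0,\dots,b_m,a)$ one solves the truncated equation for $c$, sets $p_n=\sqrt{b_n/(a^2+ct_n)}$ for $n\le m$, and continues with the geometric tail. Thus $\hat p=\Phi_m(\hat\theta)$ and $p(m)=\Phi_m(\theta)$. I would therefore regard $h(\vartheta):=g(\Phi_m(\vartheta))$ as a smooth function of finitely many variables and Taylor-expand $h(\hat\theta)-h(\theta)$ to second order around $\theta$. The difficulty is that $p(m)$ is \emph{not} a minimiser of $g$, so no first-order cancellation is available as in Lemma \ref{lem:optimal}. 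Instead I would use the unbiasedness in Assumption \ref{assu:1.1}, $E(\Delta\theta)=0$, to kill the linear term in expectation. This gives
\[
E(h(\hat\theta)-h(\theta))=\tfrac12 E\bigl[\Delta\theta^\top\nabla^2h(\xi)\Delta\theta\bigr],
\]
and the task reduces to bounding $\sum_{i,j}|\partial_{ij}h(\xi)|\,|\Delta\theta_i||\Delta\theta_j|$ by $C N^{-1}$ with $C$ independent of $m$.

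The key is to work in relative coordinates. Since $\Delta\beta_n=\sigma_n\hat\varepsilon_n/\sqrt N$ and $\sigma_n/\beta_n<L$, write $\Delta\beta_n=\beta_n u_n$ with $|u_n|\le L|\hat\varepsilon_n|/\sqrt N$. The bound then needs to be in terms of the scaled Hessian $\beta_i\beta_j\partial_{\beta_i\beta_j}h$ together with the mixed and $\alpha$ derivatives, and this scaled Hessian should be summable in $i,j$ uniformly in $m$. I would compute it via the chain rule. First, $\beta_i\,\partial\hat c/\partial\beta_i=-p_i/(2|J|\,\cdot)$-type expressions as in \eqref{eq:pctheta}. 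Lemma \ref{lem4.2} bounds $\hat p_n$ two-sidedly by $2^{-(p+1/2)n}$ and shows $\hat c\in[c_{low},c_{up}]$ uniformly along the segment $\xi$. Using these, $|J|$ is bounded below by the $n=0$ term and $\beta_i|\partial c/\partial\beta_i|\le C 2^{-(p+1/2)i}\cdot 2^{-i}$-ish, which is geometrically decaying. Similarly, $\beta_i|\partial p_n/\partial\beta_i|\le C(\delta_{ni}p_n+p_np_i)$. The tail entries depend only on $\beta_m$ through $p_m$ and inherit the same bounds, with geometric factors absorbed into $\gamma$ and $s$. For the outer function, the partial derivatives of $g$ in $p_n$ are $-\beta_n/p_n^2\cdot W+V t_n$ and $-2\beta_n W/p_n^3$ etc. Here the work $W=\sum t_np_n$ and the variance $V=\sum\beta_n/p_n-\alpha^2$ are bounded uniformly in $m$, since $p>1/2$ makes $\sum 2^{n}2^{-(p+1/2)n}$ and $\sum 2^{-2pn}2^{(p+1/2)n}$ converge. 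Combining these yields
\[
|\beta_i\beta_j\partial_{ij}h(\xi)|\le C\,2^{-(p-1/2)(i+j)/2}
\]
or a similar summable weight, and the analogous bounds for the $\alpha$ entries.

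With these weights,
\[
\bigl|\Delta\theta^\top\nabla^2h(\xi)\Delta\theta\bigr|\le \frac{C}{N}\Bigl(\sum_i w_i|\hat\varepsilon_i|+|\hat\varepsilon_{m+1}|\Bigr)^2
\]
with $\sum_i w_i<\infty$. Taking expectations with $E\hat\varepsilon_i^2\le1$ and Cauchy--Schwarz (no independence needed) gives $CN^{-1}$ uniformly in $m$. A technical point is that $\xi$ lies between $\theta$ and $\hat\theta$. This is harmless because Lemma \ref{lem4.2}'s bounds hold for any parameter vector satisfying $|\vartheta_n/\beta_n-1|\le l_N$, and the segment stays in that set for $N>N_0$. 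Its proof applies verbatim to such $\vartheta$.

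The main obstacle will be the second step: the $m$-uniform summability of the scaled second derivatives, especially through the implicit function $\hat c$. Its second derivatives involve $J^{-2}$, $J^{-3}$ and sums over $n$ of $t_n\sqrt{b_n}/(a^2+ct_n)^{3/2}$ and $t_n^2\sqrt{b_n}/(a^2+ct_n)^{5/2}$. These must be shown to be bounded using $p>1/2$ and $\hat c\ge c_{low}>0$. I would handle this first by proving one auxiliary estimate: every such sum is $\asymp\sum_n 2^{-(p+1/2)n}$ up to uniformly bounded factors. After that, the remaining derivatives follow mechanically by the product rule.
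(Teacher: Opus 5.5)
Your proposal is correct and is essentially the paper's argument. By the chain rule $\nabla^2 h=G^\top\nabla^2 g\,G+\sum_n \partial_{p_n} g\,\nabla^2 p_n$, your single expansion of $h=g\circ\Phi_m$ (linear term killed by $E(\hat\varepsilon)=0$) splits into the same two pieces the paper bounds separately, namely $E(\Delta p^\top H\Delta p)$ and $\sum_n \partial_n g(p(m))\,E(\Delta p_n)$, using the same $m$-uniform derivative bounds from Lemma~\ref{lem4.2} and summability from $p>1/2$; your weight $2^{-(p-1/2)(i+j)/2}$ is indeed sufficient, and sharp on diagonal terms such as $H_{ii}p_i^2$. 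One slip: $\beta_i|\partial c/\partial\beta_i|=\ell_ip_i/Q$ is of order $p_i\asymp 2^{-(p+1/2)i}$, with no extra factor $2^{-i}$, and this is exactly what your (correct) bound $\beta_i|\partial p_n/\partial\beta_i|\le C(\delta_{ni}p_n+p_np_i)$ uses.
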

\begin{proof}
Following the proof of Lemma \ref{lem4.1}, we let $p=p(m)+\lambda(\hat{p}-p(m))$, where $\lambda\in [0,1]$, and define
\[
\phi(\lambda)=g(p).
\]
Then, we have
\[
g(\hat{p})-g(p(m))=\phi(1)-\phi(0).
\]
Applying the Taylor expansion of $\phi$ at $\lambda=0$, we obtain
\begin{equation}
E(g(\hat{p})-g(p(m)))=E(\nabla g(p(m))\Delta p)+\frac12E(\Delta p^\top H\Delta p)\label{eq4.4}
\end{equation}
where \(H:=\nabla^2 g(\xi_p)\) is the Hessian matrix of \(g\) at a point \(p(\xi_p)\) between \(\hat{p}\) and \(p(m)\), and $\Delta p=\hat{p}-p(m)$. 

We first estimate the second term of (\ref{eq4.4}). By Lemma \ref{lem4.2} and equation (\ref{eq:pgpp}),
	\[
	|H_{i,j}|\le \frac{t_i\beta_j}{p_j^2(\xi_p)}+\frac{t_j\beta_i}{p_i^2(\xi_p)}\leq C2^{i+j}=:B_{i,j}, \quad i\ne j,
	\]
and
	\[
	|H_{i,i}|\leq \frac{2\beta_i\left(\sum_{n=0}^{\infty}t_np_n(\xi_p)\right)}
	{p_i^3(\xi_p)}+\frac{2\beta_i t_i}{p_i^2(\xi_p)}\leq C2^{(p+3/2)i}+C2^{2i}\leq C2^{(p+3/2)i}=:B_{i,i}.
	\]
Consequently, the Cauchy–Schwarz inequality gives
	\begin{align}
		|E(\Delta p^\top H\Delta p)|
		&\leq \sum_{i,j=0}^{\infty}B_{i,j}E|\Delta p_i^\top \Delta p_j|\nonumber\\
		&\leq \sum_{i,j=0}^{\infty}B_{i,j}
		\sqrt{E[(\Delta p_i)^2]E[(\Delta p_j)^2]}.
		\label{Egpm}
	\end{align}

	Next, we estimate \(E[(\Delta p_i)^2]\). For $i\leq m$, the mean value theorem implies that
	\begin{equation}
		\Delta p_i
		=
		\sum_{k=0}^{m}\frac{\partial p_i(\xi)}{\partial \hat{\beta}_k}
		\frac{\sigma_k}{\sqrt{N}}\hat{\varepsilon}_k
		+
		\frac{\partial p_i(\xi)}{\partial \hat{\alpha}}
		\frac{\sigma_{\alpha}}{\sqrt{N}}\hat{\varepsilon}_{m+1},
		\label{deltapi}
	\end{equation}
where $\xi$ is between $\hat{\theta}$ and $\theta$. A similar calculation as in the proof of Lemma \ref{lem:yijie}, together with Lemma \ref{lem4.2}, yields
	\begin{equation}
	\sup_{\hat{\theta}}\left|
	\frac{\partial p_i}{\partial \hat{\alpha}}
	\right|
	\leq C2^{-(p+1/2)i}, \quad 	\sup_{\hat{\theta}}\left|
	\frac{\partial p_i}{\partial \hat{\beta}_i}
	\right|
	\leq C2^{(p-1/2)i}, \quad \sup_{\hat{\theta}}\left|
	\frac{\partial p_i}{\partial \hat{\beta}_k}
	\right|
	\leq
	C2^{-(p+1/2)i}2^{(p-1/2)k},
	\quad k\neq i.	\label{partialest1}
	\end{equation}
	where $C$ is independent of $m$ and $N$. Since \(E(\hat{\varepsilon}_i^2)<E(\varepsilon_i^2)=1\), we obtain
	\begin{align*}
		E(\Delta p_i^2)&\leq\frac{1}{N}\sum_{k=0}^{m}\sup_{\hat{\theta}}\left(\frac{\partial p_i}{\partial \hat{\beta}_k}\right)^2\sigma_k^2 +\frac{2}{N}\sum_{k_1>k_2=0}^m\sup_{\hat{\theta}}\left|\frac{\partial p_i}{\partial \hat{\beta}_{k_1}}\frac{\partial p_i}{\partial \hat{\beta}_{k_2}}\right|\sigma_{k_1}\sigma_{k_2} \\
		&\quad +\frac{2}{N}\sum_{k=0}^{m}\sup_{\hat{\theta}}\left|\frac{\partial p_i}{\partial \hat{\beta}_k}\frac{\partial p_i}{\partial \hat{\alpha}}\right|\sigma_k\sigma_{\alpha} +\frac{1}{N}\sup_{\hat{\theta}}\left(\frac{\partial p_i}{\partial \hat{\alpha}}\right)^2\sigma_{\alpha}^2 .\\
	\end{align*}
As $\sigma_n/\beta_n<L$ and $\beta_n=O(2^{-2pn})$, it holds that $\sigma_n=O(2^{-2pn})$. The above estimates show that each of these four terms is bounded by
\[
\frac{C}{N}2^{-(2p+1)i}.
\]
Hence, for any $i\leq m$, 
	\begin{equation}
		E(\Delta p_i^2)\leq C2^{-(2p+1)i}\frac{1}{N}. \label{Edelta}
	\end{equation}
	For $i>m$, it follows from $\Delta p_i=2^{-(p+1/2)(i-m)}\Delta p_m$ that
    \begin{equation}
	E(\Delta p_i^2)=2^{-(2p+1)(i-m)}E(\Delta p_m^2)\leq C2^{-(2p+1)i}\frac{1}{N}.\label{Edelatm}
    \end{equation}
	Combining inequalities (\ref{Egpm}), (\ref{Edelta}) and (\ref{Edelatm}), together with the bounds on the Hessian entries, yields
	\begin{equation}
	|E(\Delta p^\top H\Delta p)|\leq \frac{C}{N}\sum_{i>j=0}^{\infty}2^{i+j}\sqrt{2^{-(2p+1)i}2^{-(2p+1)j}}+\frac{C}{N}\sum_{i=0}^{\infty}2^{(p+3/2)i} 2^{-(2p+1)i}\leq \frac{C}{N},\label{estsecond}
	\end{equation}
where both series converge because \(p>1/2\).

	We now estimate the first term of (\ref{eq4.4}). Since $p(m)$ is deterministic, we have
\begin{equation}
E(\nabla g(p(m))\Delta p)=\sum_{n=0}^{\infty}\frac{\partial g(p(m))}{\partial p_n}E(\Delta p_n).\label{eq4.5}
\end{equation}
For any $n\leq m$, a second-order Taylor expansion gives
\[
\Delta p_n=\nabla p_n(\theta) \frac{D\hat{\varepsilon}}{\sqrt{N}} +\frac{1}{2}(D\hat{\varepsilon})^\top\nabla^2 p_n(\xi_{\theta}) D\hat{\varepsilon} \frac{1}{N} ,
\]
where $\xi_{\theta}$ is between $\hat{\theta}$ and $\theta$. Since $E(\hat{\varepsilon})=0$, the first-order term has zero expectation, i.e.,
\begin{equation}
E\left(\nabla p_n(\theta) \frac{D\hat{\varepsilon}}{\sqrt{N}}\right)=0.\label{e0}
\end{equation}
For any $n>m$, 
\[
\Delta p_n=2^{-(p+1/2)(n-m)}\Delta p_m.
\]
Therefore, using $|\frac{\partial g(p(m))}{\partial p_n}|\leq C2^n$, we obtain
\begin{align}
|E(\nabla g(p(m))\Delta p)|&=\left|\sum_{n=0}^{m}\frac{\partial g(p(m))}{\partial p_n}E(\mathcal{Q}_n)\frac{1}{N}+E(\mathcal{Q}_m)\sum_{n=m+1}^{\infty}2^{-(p+1/2)(n-m)}\frac{\partial g(p(m))}{\partial p_n}\frac{1}{N}\right|\nonumber\\
&\leq \frac{C}{N} \left(\sum_{n=0}^{m}2^n |E(\mathcal{Q}_n)|+ 2^m |E(\mathcal{Q}_m)|\right),\label{eq4.6}
\end{align}
where 
\[
\mathcal{Q}_n=\frac{1}{2}(D\hat{\varepsilon})^\top\nabla^2 p_n(\xi_{\theta}) D\hat{\varepsilon}. 
\]
It remains to estimate $E(\mathcal{Q}_n)$, and
	\[
	|E(\mathcal{Q}_n)|\leq \frac{1}{2}\sum_{i,j=0}^{m+1}\sigma_i \sigma_j \sup_{\hat{\theta}}|\nabla^2_{i,j} p_n|,
	\]
with $\sigma_{m+1}:=\sigma_{\alpha}$. After further straightforward calculation, we have
\[
\sup_{\hat{\theta}}\left|\frac{\partial^2 p_i}{\partial \hat{\alpha}^2}\right|\leq C2^{-(p+1/2)i}, \quad \sup_{\hat{\theta}}\left|\frac{\partial^2 p_i}{\partial \hat{\beta}_i^2}\right|\leq C2^{(3p-1/2)i}, 
\]
\[
\sup_{\hat{\theta}}\left|\frac{\partial^2 p_i}{\partial \hat{\beta}_j^2}\right|\leq C2^{-(p+1/2)i+(3p-1/2)j}, \quad j\neq i,
\]
and
\[
\sup_{\hat{\theta}}\left|\frac{\partial^2 p_i}{\partial \hat{\beta}_k \partial\hat{\alpha}}\right|\leq C\sup_{\hat{\theta}}\left|\frac{\partial p_i}{\partial \hat{\beta}_k}\right|, \quad \sup_{\hat{\theta}}\left|\frac{\partial^2 p_i}{\partial \hat{\beta}_k \partial\hat{\beta}_j}\right|\leq C2^{(p-1/2)j}\sup_{\hat{\theta}}\left|\frac{\partial p_i}{\partial \hat{\beta}_k}\right|, \quad j\neq i.
\]
Decomposing
\begin{align*}
\sum_{i,j=0}^{m+1}\sigma_i \sigma_j \sup_{\hat{\theta}}|\nabla^2_{i,j} p_n|&=\sum_{i,j=0, i\neq n, j\neq n, i\neq j}^{m}\sigma_i \sigma_j \sup_{\hat{\theta}}\left|\frac{\partial^2 p_n}{\partial \hat{\beta}_i \partial\hat{\beta}_j}\right|+\sum_{i=0, i\neq n}^{m}\sigma_i^2 \sup_{\hat{\theta}}\left|\frac{\partial^2 p_n}{\partial \hat{\beta}_i^2}\right|\\
&\quad+2\sum_{i=0,i\neq n}^{m}\sigma_i \sigma_n \sup_{\hat{\theta}}\left|\frac{\partial^2 p_n}{\partial \hat{\beta}_i \partial\hat{\beta}_n}\right|+\sigma_n^2 \sup_{\hat{\theta}}\left|\frac{\partial^2 p_n}{\partial\hat{\beta}_n^2}\right|\\
&\quad +2\sum_{i=0, i\neq n}^{m}\sigma_i \sigma_{\alpha} \sup_{\hat{\theta}}\left|\frac{\partial^2 p_n}{\partial \hat{\beta}_i \partial\hat{\alpha}}\right|+2\sigma_n \sigma_{\alpha} \sup_{\hat{\theta}}\left|\frac{\partial^2 p_n}{\partial \hat{\beta}_n \partial\hat{\alpha}}\right|+\sigma_{\alpha}^2 \sup_{\hat{\theta}}\left|\frac{\partial^2 p_n}{\partial\hat{\alpha}^2}\right|,
\end{align*}
and using the above derivative estimates, we observe that each term on the right-hand side is bounded by $C2^{-(p+1/2)n}$. Therefore,
\[
|E(\mathcal{Q}_n)|\leq C2^{-(p+1/2)n}.
\]
With this estimate substituted into (\ref{eq4.6}), we get 
\begin{equation}
|E(\nabla g(p(m))\Delta p)|\leq CN^{-1}, \label{eq4.7}
\end{equation}
and the exchanges of expectation and infinite summation in (\ref{eq4.5}) and (\ref{eq4.6}) are justified by Fubini’s theorem. Combining equation (\ref{eq4.4}) with estimates (\ref{estsecond}) and (\ref{eq4.7}), we complete the proof.
\end{proof}

\begin{theorem}\label{thm4.4}
Suppose that Assumptions \ref{assu:1.1} and \ref{assu:3.1} are satisfied, and that $\sigma_n/\beta_n$ is uniformly bounded. Then there exist constants $C_1$ and $C_2$, independent of the truncation level $m$ and sample size $N$, such that
\[
E(g(\hat{p})-g(p^*))\leq C_1 N^{-1}+ C_2 2^{-m(p-\frac{1}{2}+2\eta)},
\]
$\eta=\min\{\rho,1\}$.
\end{theorem}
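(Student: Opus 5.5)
The plan is to use the decomposition \eqref{eq4.1}, take expectations, and bound the two pieces separately with Lemmas \ref{lem4.1} and \ref{lem4.3}. Since $p(m)$ and $p^*$ are deterministic, taking expectations in \eqref{eq4.1} gives
\[
E(g(\hat{p})-g(p^*)) = E\bigl(g(\hat{p})-g(p(m))\bigr) + \bigl(g(p(m))-g(p^*)\bigr).
\]
I would bound this by the absolute values of the two terms on the right.

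For the second (truncation) term, Lemma \ref{lem4.1} requires only Assumption \ref{assu:3.1}. It gives $|g(p(m))-g(p^*)|\le C_2 2^{-m(p-1/2+2\eta)}$ with $\eta=\min\{\rho,1\}$ and $C_2$ independent of $m$. It is also independent of $N$, because nothing random enters $p(m)$.

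For the first (estimation) term, Lemma \ref{lem4.3} applies under exactly the hypotheses of the theorem: Assumptions \ref{assu:1.1} and \ref{assu:3.1}, together with the uniform bound $\sigma_n/\beta_n<L$. It gives $|E(g(\hat p)-g(p(m)))|\le C_1N^{-1}$ with $C_1$ independent of $m$. Adding the two bounds yields the claimed inequality.

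The only real point of care is the range of $N$. Lemma \ref{lem4.3}, through Lemma \ref{lem4.2}, holds for $N>N_0$, where $N_0$ is independent of $m$. For $N\le N_0$ I would need a crude uniform bound on $E(g(\hat p))$ so that the estimate still holds after enlarging $C_1$.
\begin{itemize}
\item For $\hat\beta_n$: Assumption \ref{assu:1.1} gives $|\hat\beta_n-\beta_n|\le \sigma_n B_N/\sqrt N$, which is at most $L\beta_n B_N/\sqrt N$. So for the finitely many small $N$, $\hat\beta_n$ is bounded above by a multiple of $2^{-2pn}$.
\item For $\hat p_n$: the same argument as in Lemma \ref{lem4.2} should give $\hat p_n\asymp 2^{-(p+1/2)n}$ uniformly in $m$, provided positivity of $\hat\theta$ holds.
\item With these bounds, $g(\hat p)$ is bounded by a constant independent of $m$. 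Then $E(g(\hat p)-g(p^*))\le C\le (CN_0)N^{-1}$ for $N\le N_0$.
\end{itemize}

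If positivity fails for small $N$, I would simply restrict the statement to $N>N_0$, as is implicit in the lemmas. This bookkeeping is the main (minor) obstacle; the substance is already contained in Lemmas \ref{lem4.1} and \ref{lem4.3}.
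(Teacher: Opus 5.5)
Your proposal is correct and is exactly the paper's proof: the paper takes expectations in \eqref{eq4.1} and adds the deterministic truncation bound of Lemma \ref{lem4.1} to the estimation bound of Lemma \ref{lem4.3}, both with constants independent of $m$ and $N$. The paper leaves the restriction $N>N_0$ inherited from Lemma \ref{lem4.2} implicit, so your fallback of restricting to $N>N_0$ matches it; your small-$N$ extension is extra and would need more than the ``should give'' you state, since the uniform-in-$m$ bound $\hat p_n\le C2^{-(p+1/2)n}$ rests on the lower bound $\hat c>c^*/2$, which Lemma \ref{lem4.2} obtains only by using $l_N\to 0$.
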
	
\begin{proof}
Combining the results of Lemmas \ref{lem4.1} and \ref{lem4.3} with equation (\ref{eq4.1}), the proof is complete.
\end{proof}

\begin{remark}
We emphasize that Theorem \ref{thm4.4} does not require that $\varepsilon_n$, $n=0,..,m+1$ in Assumption \ref{assu:1.1} to be independent. However, even if they are independent, the convergence rate cannot be improved in general using the argument in the proof of this theorem.
\end{remark}
	
	\section{Non-uniform Sample Allocation Methods}\label{sect5}
	
	In the error analysis, the same sample size $N$ was used to estimate all parameters $\beta_0,\cdots,\beta_m,\alpha$. However, it is not clear whether a non-uniform allocation of samples can reduce the computational efficiency loss $\Delta g$ under a fixed total computational budget in prior estimation. This problem will be investigated in this section.
	
	In the view of Section \ref{sect4}, the efficiency loss is
	\[
	E(g(\hat{p})-g(p^*))=E(g(\hat{p})-g(p(m)))+(g(p(m))-g(p^*))
	\]
where $g(p(m))-g(p^*)$ only relies on $m$. Thus, our goal is to find the optimal sample size to minimize $|E(g(\hat{p})-g(p(m)))|$ by minimizing its leading-order term. We assume that Assumptions \ref{assu:1.1}, \ref{assu:2} and \ref{assu:3.1} are satisfied. Since $t_n=2^n$ and $\beta_{n+1}\approx \beta_n 2^{-2p}$ for any $n\geq m$, we approximate 
\[
g(p(m))\approx g_m(p(m)):=\left(\sum_{n=0}^{m}\frac{\beta_n}{p_n(m)}+\frac{\gamma\beta_m}{p_m(m)}- \alpha^2\right)\left(\sum_{n=0}^{m} t_n p_n(m)+\gamma t_m p_m(m)\right),
\]
where
\[
p_n(m)=\sqrt{\frac{\beta_n}{\alpha^2+c(m)t_n}},\quad n=0,1,2,...,m,
\]
and $c(m)$ satisfies the equation
\[
\sum_{n=0}^{m}\sqrt{\frac{\beta_n}{\alpha^2+c(m)t_n}}+s\cdot\sqrt{\frac{\beta_m}{\alpha^2+c(m)t_m}}=1.
\]	
Here, $\gamma = \frac{2^{-(p-1/2)}}{1-2^{-(p-1/2)}}$ and $s =\frac{2^{-(p+1/2)}}{1-2^{-(p+1/2)}}$. Analogously, $g(\hat{p})\approx g_m(\hat{p})$. Therefore, 
\[
E(g(\hat{p})-g(p(m)))\approx E(g_m(\hat{p})-g_m(p(m))),
\]
and we focus on the leading-order term of $E(g_m(\hat{p})-g_m(p(m)))$. 

Let $N_i$, $i=0,1,...,m+1$ be the sample size to simulate $\beta_0,\cdots,\beta_m,\alpha$.
Following a similar calculation as in the proof of Theorem \ref{the:erjie} and Lemma \ref{lem4.3}, we get
\begin{align*}
&E(g_m(\hat{p})-g_m(p(m)))\\
&\approx E(\nabla g_m(p(m))\Delta p)+\frac{1}{2}E(\Delta p^\top H \Delta p)\\
&\approx \frac{1}{2}\sum_{n=0}^{m}\frac{\partial g_m(p(m))}{\partial p_n}E(\hat{\varepsilon}^\top (D(N) \nabla^2 p_n D(N))\hat{\varepsilon})+\frac{1}{2}E(\hat{\varepsilon}^\top (D(N) (G^\top H G)D(N))\hat{\varepsilon}),
\end{align*}
where $D(N)=\mathrm{diag}(\sigma_0/\sqrt{N_0},...,\sigma_{m+1}/\sqrt{N_{m+1}})$ with $\sigma_{m+1}=\sigma_{\alpha}$. This approximation shares the same leading-order term as $E(g_m(\hat{p})-g_m(p(m)))$. Note that $p(m)$ is not optimal for the $m$-trucated optimization problem, so $E(\nabla g_m(p(m))\Delta p)\neq 0$. Since $\hat{\varepsilon}$ is a vector of independent truncated standard normal random variables with $E(\hat{\varepsilon}_i)\approx 1$, we have
\[
E(\hat{\varepsilon}^\top (D(N) (G^\top H G)D(N))\hat{\varepsilon})\approx \sum_{i=0}^{m+1} \frac{(G^\top HG)_{ii}\cdot \sigma_i^2}{N_i},
\]
and
\begin{align*}
&\sum_{n=0}^{m}\frac{\partial g_m(p(m))}{\partial p_n}E(\hat{\varepsilon}^\top (D(N) \nabla^2 p_n D(N))\hat{\varepsilon})\\
&\approx \sum_{i=0}^{m+1} \left(\sum_{n=0}^{m}\frac{\partial g_m(p(m))}{\partial p_n}(\nabla^2 p_n)_{ii}\right)\frac{\sigma_i^2}{N_i},
\end{align*}
where $G=\nabla p(\theta)$ and $H=\nabla^2 g_m(p(m))$.  Combining these terms gives
\[
E(g_m(\hat{p})-g_m(p(m)))\approx \frac{1}{2}\sum_{i=0}^{m+1} \frac{K_{i}\sigma_i^2}{N_i},
\]
where 
\begin{equation}
K_i=(G^\top HG)_{ii}+\sum_{n=0}^{m}\frac{\partial g_m(p(m))}{\partial p_n}(\nabla^2 p_n)_{ii}. \label{ki}
\end{equation}

	Let $c_i$ be the computational cost to generate a sample of $\beta_i$, $0\leq i\leq m$, and $c_{m+1} $ be the cost to sample from $\alpha$. Typically, $c_i = t_i = 2^i$, $0\leq i\leq m$, and $c_{m+1}=t_M=2^M$ for a sufficiently large $M$. As these samples are taken independently, the total computational cost is $\sum_{i=0}^{m+1} c_i N_i$. Assuming $K_i>0$ for all $i$. The problem of minimizing the expected efficiency loss with the total computational budget constraint $\mathcal{C}$ can be transformed into the following optimization problem:
		\begin{equation*}
			\min_{N_i>0} \sum_{i=0}^{m+1} \frac{K_{i}\sigma_i^2}{N_i} \quad \text{s.t.} \quad \sum_{i=0}^{m+1} c_i N_i = \mathcal{C}.
		\end{equation*}
Its solution satisfies
		\begin{equation}
		N_i \propto \sqrt{\frac{K_{i}\sigma_i^2}{c_i}},\quad 0\leq i\leq m+1.\label{eqoptimal}
		\end{equation}
Theorem \ref{the:5.1} provides the exact expressions for $K_i$. 

\begin{theorem}\label{the:5.1}
Let $K_i$, $0\leq i\leq m+1$, be defined by (\ref{ki}). Define \[
V_m=\sum_{n=0}^m w_n\frac{\beta_n}{p_n}-\alpha^2,
\qquad
T_m=\sum_{n=0}^m w_n t_n p_n, \qquad Q=\sum_{n=0}^m \ell_n \frac{t_n p_n^3}{\beta_n},
\]
where $p_n=p_n(m)$ satisfies $\sum_{n=0}^{m}\ell_n p_n(m)=1$, and
\[
\ell_n=
\begin{cases}
1, & 0\leq n<m,\\
1+s, & n=m,
\end{cases}
\qquad
w_n=
\begin{cases}
1, & 0\leq n<m,\\
1+\gamma, & n=m.
\end{cases}
\]

Then we have
\begin{align*}
		&(G^\top HG)_{ii}\\
		&=
		\begin{cases}
			\displaystyle
        \frac{w_i p_i^2}{2\beta_i^2}\left[T_m\frac{\beta_i}{p_i^3}-w_i\frac{\beta_i t_i}{p_i^2}+\frac{\ell_i}{Q}\left(\frac{\beta_i}{p_i^2}\sum_{n=0}^m
w_n\frac{t_n^2p_n^3}{\beta_n}-T_m t_i\right)\right],\quad 0\leq i\leq m,\\[18pt]
			\displaystyle
			\frac{2\alpha^2}{Q}\left[\left(\sum_{n=0}^mw_n\frac{p_n^3}{\beta_n}\right)\left(\sum_{n=0}^mw_n \frac{t_n^2 p_n^3}{\beta_n}\right)-\left(\sum_{n=0}^m w_n \frac{t_n p_n^3}{\beta_n}\right)^2\right],\quad i=m+1.
		\end{cases}
\end{align*}
The remaining contribution to $K_i$ is
\[
\sum_{n=0}^{m}\frac{\partial g_m(p(m))}{\partial p_n}(\nabla^2 p_n)_{ii}=\sum_{n=0}^m w_n\left(t_nV_m-\frac{\beta_nT_m}{p_n^2}\right)(\nabla^2 p_n)_{ii},
\]
where the second derivatives of $p_n$ are given by
\begin{align*}
		&(\nabla^2 p_n)_{ii}\\
		&=\begin{cases}
			\displaystyle
         -\delta_{ni}\frac{p_i}{4\beta_i^2}
\left(1+2t_ip_i^2\frac{\partial c(m)}{\partial\beta_i}\right)
+\frac{3t_n^2p_n^5}{4\beta_n^2}\left(\frac{\partial c(m)}{\partial\beta_i}\right)^2
-\frac{t_np_n^3}{2\beta_n}\frac{\partial^2 c(m)}{\partial\beta_i^2}, \quad 0\leq i\leq m,\\[18pt]
			\displaystyle
			-\frac{p_n^3}{\beta_n}
+\frac{3p_n^5}{\beta_n^2}\left(\alpha+\frac{t_n}{2}\frac{\partial c(m)}{\partial\alpha}\right)^2
-\frac{t_n p_n^3}{2\beta_n}\frac{\partial^2c(m)}{\partial\alpha^2},\quad\quad i=m+1.
		\end{cases}
\end{align*}
Here, $\delta_{ni}$ denotes the Kronecker delta, i.e., $\delta_{ni}=1$ if $n=i$ and $\delta_{ni}=0$ otherwise. The first and second derivatives of $c(m)$ are 

\[
\frac{\partial c(m)}{\partial\beta_i}=\frac{\ell_i p_i}{\beta_i Q}, \quad \frac{\partial c(m)}{\partial \alpha}
=-\frac{2\alpha }{Q}\sum_{n=0}^m\ell_n\frac{p_n^3}{\beta_n},
\]
\[
\frac{\partial^2 c(m)}{\partial\beta_i^2}
=
-\frac{\ell_i p_i}{2\beta_i^2Q}
+
\frac{\ell_i^2p_i^2}{\beta_i^2Q^2}
\left[
\frac{3}{2Q}
\sum_{j=0}^m
\ell_j\frac{t_j^2p_j^5}{\beta_j^2}
-\frac{t_ip_i^2}{\beta_i}
\right],
\]
\[
\frac{\partial^2c(m)}{\partial\alpha^2}
=
\frac{2}{Q}
\left[
-\sum_{n=0}^m
\ell_n\frac{p_n^3}{\beta_n}
+
3\alpha^2
\sum_{n=0}^m
\ell_n\frac{p_n^5}{\beta_n^2}
\left(
1-\frac{t_n}{Q}
\sum_{j=0}^m
\ell_j\frac{p_j^3}{\beta_j}
\right)^2
\right].
\]

\end{theorem}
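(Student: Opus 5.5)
This is a direct calculation, and I would organise it around the implicit function theorem applied to the constraint defining $c(m)$. Write $p_n=\beta_n^{1/2}(\alpha^2+c\,t_n)^{-1/2}$ for $0\le n\le m$, and write the constraint as $F(\theta,c)=\sum_{n=0}^m \ell_n p_n-1=0$.

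\textbf{Derivatives of $c(m)$.} Use the identities
\[
\frac{\partial p_n}{\partial c}=-\frac{t_n p_n^3}{2\beta_n},\qquad \frac{\partial p_n}{\partial \beta_n}=\frac{p_n}{2\beta_n},\qquad \frac{\partial p_n}{\partial\alpha}=-\frac{\alpha p_n^3}{\beta_n},
\]
all of which follow from $p_n^2(\alpha^2+ct_n)=\beta_n$. Then $\partial F/\partial c=-Q/2$. The implicit function theorem gives $\partial c/\partial\beta_i=\ell_i p_i/(\beta_i Q)$ and $\partial c/\partial\alpha=-(2\alpha/Q)\sum_n\ell_n p_n^3/\beta_n$. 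Differentiating $F(\theta,c(\theta))=0$ a second time gives
\[
\frac{\partial^2 c}{\partial\theta_i^2}=-\frac{F_{\theta_i\theta_i}+2F_{\theta_i c}\,c_{\theta_i}+F_{cc}\,c_{\theta_i}^2}{F_c}.
\]
This needs $\partial^2 p_n/\partial c^2=3t_n^2p_n^5/(4\beta_n^2)$, together with the mixed derivatives obtained from the same identities. Simplifying should reproduce the stated formulas for $\partial^2c/\partial\beta_i^2$ and $\partial^2c/\partial\alpha^2$.

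\textbf{Second derivatives of $p_n$.} Apply the chain rule to $p_n(\theta,c(\theta))$:
\[
(\nabla^2p_n)_{ii}=\partial_{ii}p_n+2\,\partial_{ic}p_n\,c_i+\partial_{cc}p_n\,c_i^2+\partial_c p_n\,c_{ii}.
\]
For $\beta_i$, the direct terms $\partial^2p_n/\partial\beta_i^2=-\delta_{ni}p_i/(4\beta_i^2)$ and $\partial^2 p_n/\partial\beta_i\partial c=-\delta_{ni}t_ip_i^3/(4\beta_i^2)$ combine into the Kronecker-delta term in the statement. For $\alpha$, the terms combine into the square $(\alpha+\tfrac{t_n}{2}c_\alpha)^2$. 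The gradient of $g_m$ (with the weights $w_n$ that encode the tail) is
\[
\partial g_m/\partial p_n=w_n\big(t_nV_m-\beta_nT_m/p_n^2\big),
\]
which gives the stated second contribution to $K_i$.

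\textbf{The term $(G^\top HG)_{ii}$.} Compute the Hessian of $g_m$: $H_{nn}=2w_n\beta_nT_m/p_n^3-2w_n^2\beta_nt_n/p_n^2$, and for $n\neq l$, $H_{nl}=-w_nw_l\big(t_l\beta_n/p_n^2+t_n\beta_l/p_l^2\big)$. The first-derivative columns of $G$ are
\[
G_{ni}=\delta_{ni}\frac{p_i}{2\beta_i}-\frac{t_np_n^3}{2\beta_n}\,c_{\beta_i},\qquad G_{n,m+1}=-\frac{p_n^3}{\beta_n}\big(\alpha+\tfrac{t_n}{2}c_\alpha\big).
\]
The natural split is $H=\mathrm{diag}(d)-(ab^\top+ba^\top)$ with $a_n=w_nt_n$ and $b_n=w_n\beta_n/p_n^2$. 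The key simplification is that for every column $u$ of $G$ one has $b^\top u$ proportional to $\sum_n w_n\partial_{\theta}p_n$. By the constraint this equals $-\partial_\theta\big(\sum_n(\ell_n-w_n)p_n\big)$, which acts only through the index $m$. Equivalently, $\sum\ell_n G_{n\cdot}=0$. Expanding $u^\top Hu=\sum_n d_nu_n^2-2(a^\top u)(b^\top u)$ and substituting $u$ then gives sums of the forms $\sum w_nt_n^2p_n^3/\beta_n$ and $\sum w_np_n^3/\beta_n$, producing the bracketed expressions in the statement. For $i=m+1$ I expect a Cauchy--Schwarz-type determinant to appear, coming from the rank-one structure in $c_\alpha$.

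\textbf{Main obstacle.} The hard step is the bookkeeping of $w_n$ versus $\ell_n$ at index $m$ in $G^\top HG$. The constraint uses $\ell$, but $g_m$ uses $w$, so the usual cancellation from optimality ($\nabla g_m\propto\ell$) fails; this is the same fact that makes $E(\nabla g_m\Delta p)\neq0$. I would first carry out the computation for $i\neq m$, where $\ell_i=w_i=1$, and check that the cross terms collapse as claimed. I would then treat $i=m$ and $i=m+1$ separately, checking the result numerically against finite differences of $g_m(p(\theta))$ if the algebra does not match.
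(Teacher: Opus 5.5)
Your overall route is the paper's (Appendix A): implicit differentiation of $\sum_n\ell_np_n=1$, the chain rule for $(\nabla^2p_n)_{ii}$, and substitution of the columns of $G$ into the quadratic form of $H$. Several pieces are correct as written: the derivatives of $c(m)$ and $p_n$ (your general formula $c_{\theta_i\theta_i}=-(F_{\theta_i\theta_i}+2F_{\theta_ic}c_{\theta_i}+F_{cc}c_{\theta_i}^2)/F_c$ is equivalent to the paper's differentiation of $\ell_ip_i/(\beta_iQ)$), the gradient of $g_m$, and the splitting $H=\mathrm{diag}(d)-(ab^\top+ba^\top)$.

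The gap is in the step you call the key simplification for $G^\top HG$. It is false that $b^\top u$ is proportional to $\sum_nw_nu_n$. Because $\beta_n/p_n^2=\alpha^2+c(m)t_n$, one has $b=\alpha^2w+c(m)a$, so
\[
b^\top u=\alpha^2\sum_n w_nu_n+c(m)\,a^\top u=\alpha^2(\gamma-s)u_m+c(m)\,a^\top u .
\]
Concretely, for $u=G_{\cdot i}$ one gets $b^\top u=\frac{p_i}{2\beta_i}\bigl(w_i\beta_i/p_i^2-\ell_iT_m/Q\bigr)$. This is not a fixed multiple of $G_{mi}$, since the ratio depends on $i$ through $\alpha^2+c(m)t_i$. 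Using your proportionality would lose the $c(m)(a^\top u)^2$ contribution and give wrong formulas.

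For $0\le i\le m$ you need no structural identity at all. Insert $G_{ni}=\frac{p_i}{2\beta_i}\bigl(\delta_{ni}-\ell_it_np_n^3/(\beta_nQ)\bigr)$ into $2T_m\sum_nw_n\beta_nu_n^2/p_n^3-2(b^\top u)(a^\top u)$. The two terms proportional to $\ell_i^2T_m\sum_nw_nt_n^2p_n^3/\beta_n$ cancel, and the stated bracket remains.

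For $i=m+1$, the compact form requires exactly the relation you missed. Write $P=\sum_n\ell_np_n^3/\beta_n$ and $W_k=\sum_nw_nt_n^kp_n^3/\beta_n$, and use $\alpha+\frac{t_n}{2}\partial c(m)/\partial\alpha=\alpha(1-t_nP/Q)$. The quadratic form then reduces to
\[
2\alpha^2\Bigl[T_m\bigl(W_0-W_1P/Q\bigr)-\Bigl(\sum_nw_np_n\Bigr)\bigl(W_1-W_2P/Q\bigr)\Bigr].
\]
Next substitute $T_m=\alpha^2W_1+c(m)W_2$ and $\sum_nw_np_n=\alpha^2W_0+c(m)W_1$ (again $b=\alpha^2w+c(m)a$). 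This gives $2\alpha^2(W_0W_2-W_1^2)\bigl(c(m)+\alpha^2P/Q\bigr)$. Finally, the normalization $\alpha^2P+c(m)Q=1$ turns the last factor into $1/Q$.

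So the Gram determinant comes from these two relations, not from a rank-one structure in $\partial c(m)/\partial\alpha$. If you drop the false proportionality and add these identities, your plan goes through exactly as in the paper.
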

\begin{proof}
The proof is straightforward following some standard calculation.
\end{proof}

In practice, a small number of samples may be used to obtain preliminary estimates of $\hat{\beta}_n$, $\hat{\alpha}$ in order to calculate $K_i$. Substituting these estimates into the allocation formula (\ref{eqoptimal}) yields our first allocation strategy.

Although Theorem \ref{the:5.1} provides explicit expressions for $K_i$, evaluating them requires preliminary estimation of several parameters, which may be time-consuming. A simpler allocation rule can be obtained from their asymptotic behavior as the level $i$ increases.

\begin{theorem}
		\label{the:4.3}
		Suppose that $\sigma_n\asymp \beta_n$ and $c_n=t_n$. Under Assumptions \ref{assu:1.1}, \ref{assu:2} and \ref{assu:3.1}, the optimal sample size satisfies
			\[
			N_i \asymp \sqrt{w_i} 2^{-(2p+1)i/4}, \quad 0\leq i\leq m. 		
			\]
	\end{theorem}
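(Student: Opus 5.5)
Starting from the allocation rule (\ref{eqoptimal}), $N_i\propto\sqrt{K_i\sigma_i^2/c_i}$, we substitute $\sigma_i\asymp\beta_i\asymp 2^{-2pi}$ and $c_i=t_i=2^i$. This gives $\sqrt{\sigma_i^2/c_i}\asymp 2^{-2pi}2^{-i/2}$. The claim then reduces to showing that $K_i\asymp w_i\,2^{(6p-1)i/2}$, uniformly in $i\le m$ with constants independent of $m$. Indeed, this yields
\[
N_i\asymp\sqrt{w_i}\,2^{(6p-1)i/4}\,2^{-2pi-i/2}=\sqrt{w_i}\,2^{-(2p+1)i/4}.
\]
So the whole proof is an order-of-magnitude analysis of the two pieces of $K_i$ in (\ref{ki}), using the explicit formulas of Theorem \ref{the:5.1}.

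First we record the basic scalings. By Assumption \ref{assu:3.1} and the argument of Lemma \ref{lem4.2} applied with exact parameters, $p_n=p_n(m)\asymp 2^{-(p+1/2)n}$ and $c(m)$ is bounded above and below by positive constants independent of $m$. Hence $t_np_n^3/\beta_n\asymp 2^{n}2^{-3(p+1/2)n}2^{2pn}=2^{-(p+1/2)n}$, so $Q\asymp 1$. The sums $T_m$, $V_m$, $\sum w_n p_n^3/\beta_n$ and $\sum w_n t_n^2p_n^3/\beta_n$ are all $O(1)$; for the last this uses $2^{(1/2-p)n}$ summable since $p>1/2$. From Theorem \ref{the:5.1} we also get $\partial c/\partial\beta_i\asymp p_i/\beta_i\asymp 2^{(p-1/2)i}$ and $\partial^2c/\partial\beta_i^2=O(p_i/\beta_i^2+p_i^2t_ip_i^2/\beta_i^3)=O(2^{(3p-1/2)i})$.

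Next we treat $(G^\top HG)_{ii}$ for $i\le m$. The bracket contains $T_m\beta_i/p_i^3\asymp 2^{(p+3/2)i}$, which dominates the other terms: $w_i\beta_it_i/p_i^2\asymp 2^{2i}$ is smaller since $p>1/2$, and the $\ell_i/Q$ terms are $O(2^{i})$. Multiplying by $w_ip_i^2/(2\beta_i^2)\asymp w_i2^{(2p-1)i}$ gives $(G^\top HG)_{ii}\asymp w_i\,2^{(3p+1/2)i}$. This is positive, as required for $K_i>0$.

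The second piece is $\sum_n w_n(t_nV_m-\beta_nT_m/p_n^2)(\nabla^2p_n)_{ii}$. The coefficients $t_nV_m-\beta_nT_m/p_n^2$ are $O(2^n)$. In the diagonal term $n=i$ of $(\nabla^2 p_n)_{ii}$, the leading part is $-\frac{p_i}{4\beta_i^2}(1+2t_ip_i^2\,\partial c/\partial\beta_i)$, which is of order $2^{(3p-1/2)i}$. The other parts $\frac{3t_n^2p_n^5}{4\beta_n^2}(\partial c/\partial\beta_i)^2$ and $\frac{t_np_n^3}{2\beta_n}\partial^2c/\partial\beta_i^2$ are of order $2^{-(p+1/2)n}2^{(2p-1)i}$ and $2^{-(p+1/2)n}2^{(3p-1/2)i}$. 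Summing against the $O(2^n)$ coefficients with $p>1/2$: the $n=i$ contribution is $O(2^{(3p+1/2)i})$, and the off-diagonal sum is $O(2^{(3p-1/2)i}\sum_{n\le m}2^{(1/2-p)n})=O(2^{(3p-1/2)i})$. So this piece is also of order at most $2^{(3p+1/2)i}$ times $w_i$.

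The main obstacle is that both pieces have the same order $2^{(3p+1/2)i}$. We must therefore exclude cancellation of their leading constants. Note that $2^{(3p+1/2)i}$ does not equal the target $2^{(6p-1)i/2}=2^{(3p-1/2)i}$. I would first recheck the exponent bookkeeping, in particular which power of $2$ the diagonal entries of $G^\top HG$ carry. The statement forces $K_i\asymp w_i2^{(3p-1/2)i}$, which suggests that the leading $2^{(3p+1/2)i}$ contributions cancel exactly. This is the analogue of the first-order cancellation in Lemma \ref{lem:optimal}. It is plausible because $\partial g_m/\partial p_n\approx$ const for near-optimal $p(m)$, so the $n=i$ term in the second piece nearly cancels $T_m\beta_i/p_i^3$ in the first.

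So the key step is to expand, for $n=i$, the product $w_i(t_iV_m-\beta_iT_m/p_i^2)\cdot(-p_i/(4\beta_i^2))$ and add it to $\frac{w_ip_i^2}{2\beta_i^2}T_m\beta_i/p_i^3$. I would then verify that these cancel to leading order, using the stationarity relation $\beta_i/p_i^2\approx(\alpha^2+c(m)t_i)$ and $T_m\approx V_m/c(m)$ from optimality. The surviving terms must be shown to be $\asymp w_i2^{(3p-1/2)i}$ with a positive constant. If that check works, substituting into (\ref{eqoptimal}) completes the proof.
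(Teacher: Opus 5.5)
Your reduction contains an arithmetic slip, and it sends the rest of the plan in the wrong direction. Since $\sqrt{\sigma_i^2/c_i}\asymp 2^{-(2p+1/2)i}$, the target $N_i\asymp\sqrt{w_i}\,2^{-(2p+1)i/4}$ requires $\sqrt{K_i}\asymp\sqrt{w_i}\,2^{(6p+1)i/4}$. That is, you need $K_i\asymp w_i 2^{(3p+1/2)i}$, not $w_i2^{(3p-1/2)i}$. Your displayed identity is false: $2^{(6p-1)i/4}\,2^{-2pi-i/2}=2^{-(2p+3)i/4}$.

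So your (correct) computation $(G^\top HG)_{ii}\asymp w_i2^{(3p+1/2)i}$ already has exactly the required order. The ``key step'' you propose, showing that the $2^{(3p+1/2)i}$ contributions of the two pieces cancel, points the wrong way. If it succeeded it would give $N_i\asymp\sqrt{w_i}\,2^{-(2p+3)i/4}$ and contradict the theorem. It also cannot succeed, for the reason below.

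The missing ingredient is a sharp bound on the coefficients $\partial g_m/\partial p_n=w_n\bigl(t_nV_m-\beta_nT_m/p_n^2\bigr)$. Your crude bound $O(2^n)$ makes the diagonal $n=i$ term of the second piece look comparable to $(G^\top HG)_{ii}$. With that bound you can neither rule out cancellation nor conclude $K_i\asymp w_i2^{(3p+1/2)i}$.

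The paper's proof uses $\beta_i/p_i^2=\alpha^2+c(m)t_i$ and $\sum_n\ell_np_n=1$ to get $V_m-c(m)T_m=\alpha^2\bigl(\sum_n w_np_n-1\bigr)=\alpha^2(\gamma-s)p_m$. Hence
\[
t_iV_m-\frac{\beta_iT_m}{p_i^2}=\alpha^2(\gamma-s)\,t_ip_m-\alpha^2T_m,
\]
which is bounded uniformly in $i\le m$ and in $m$, because $t_ip_m\le t_mp_m\le C$ and $T_m\le C$. You gestured at this with ``$T_m\approx V_m/c(m)$'', but you used it to look for a cancellation instead of to bound the coefficient.

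With a bounded coefficient, the $n=i$ term $-w_i(\cdots)\frac{p_i}{4\beta_i^2}\bigl(1+2t_ip_i^2\,\partial c/\partial\beta_i\bigr)$ is $O(p_i/\beta_i^2)=O(2^{(3p-1/2)i})$. Your off-diagonal estimate also gives $O(2^{(3p-1/2)i})$. So the second piece of $K_i$ is smaller than the leading term $w_iT_m/(2\beta_ip_i)\asymp w_i2^{(3p+1/2)i}$ of $(G^\top HG)_{ii}$ by a factor $2^{-i}$. There is no cancellation: $K_i\asymp w_i2^{(3p+1/2)i}$, and substituting into $N_i\propto\sqrt{K_i\sigma_i^2/c_i}$ gives the stated rate.
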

	\begin{proof}
	Recall that
	\[
	K_i=(G^\top HG)_{ii}+\sum_{n=0}^{m}\frac{\partial g_m(p(m))}{\partial p_n}(\nabla^2 p_n)_{ii}.
	\]
	For the first term of $K_i$, since $\beta_i\asymp 2^{-2pi}$, $t_i=2^i$ and $p_i\asymp 2^{-(p+1/2)i}$, Theorem \ref{the:5.1} gives
	\[
	(G^\top HG)_{ii}\asymp \frac{w_i p_i^2}{\beta_i^2}\left(T_m\frac{\beta_i}{p_i^3}\right)\asymp w_i 2^{(3p+1/2)i}.
	\]
	For the second term of $K_i$, we rewrite
	\begin{align*}
	\sum_{n=0}^{m}\frac{\partial g_m(p(m))}{\partial p_n}(\nabla^2 p_n)_{ii}&=-w_i\left(t_i V_m-\frac{\beta_i T_m}{p_i^2}\right)\frac{p_i}{4\beta_i^2}
\left(1+2t_ip_i^2\frac{\partial c(m)}{\partial\beta_i}\right)\\
&\quad+\sum_{n=0}^{m}w_n\left(t_nV_m-\frac{\beta_nT_m}{p_n^2}\right)\frac{3t_n^2p_n^5}{4\beta_n^2}\left(\frac{\partial c(m)}{\partial\beta_i}\right)^2\\
&\quad-\sum_{n=0}^{m}w_n\left(t_nV_m-\frac{\beta_nT_m}{p_n^2}\right)\frac{t_np_n^3}{2\beta_n}\frac{\partial^2 c(m)}{\partial\beta_i^2}.
	\end{align*}
     Similar calculation shows that
	\[
	\frac{\partial c(m)}{\partial\beta_i}\asymp\ell_i 2^{(p-1/2)i}, \quad \frac{\partial^2 c(m)}{\partial\beta_i^2}\asymp\ell_i 2^{(3p-1/2)i}.
	\]
	Moreover, as $\beta_i/p_i^2=\alpha^2+c(m)t_i$ and $\sum_{n=0}^{m}\ell_n p_n=1$, we get
	\begin{align*}
	t_i V_m-\frac{\beta_i T_m}{p_i^2}&=t_i(V_m-c(m)T_m)-\alpha^2 T_m\\
	&=t_i \alpha^2 \left(\sum_{n=0}^{m} w_n p_n-1\right)-\alpha^2 T_m\\
	&=t_i \alpha^2 (\gamma-s)p_m-\alpha^2 T_m.
	\end{align*}
	For $i\le m$, $t_ip_m\le t_mp_m\leq C$. Together with the uniform boundedness of $T_m$, this yields
	\[
	\left|t_i V_m-\frac{\beta_i T_m}{p_i^2}\right|<C.
	\] 
	Substituting these bounds into the second term of $K_i$ gives
	\[
	\sum_{n=0}^{m}\frac{\partial g_m(p(m))}{\partial p_n}(\nabla^2 p_n)_{ii}=O( 2^{(3p-1/2)i}).
	\]
	Consequently, 
	\[
	K_i\asymp w_i 2^{(3p+1/2)i},
	\]
	and therefore,
	\[
	N_i \propto \sqrt{\frac{K_{i}\sigma_i^2}{c_i}}\asymp\sqrt{w_i}2^{-(2p+1)i/4},
	\]
	which completes the proof. 
	\end{proof}
	
	\begin{remark}
	The proof of Theorem \ref{the:4.3} reveals that $\sum_{n=0}^{m}\frac{\partial g_m(p(m))}{\partial p_n}(\nabla^2 p_n)_{ii}=o((G^\top HG)_{ii})$ as $i\rightarrow \infty$, i.e., $K_i \approx (G^\top HG)_{ii}$ when $i$ is sufficiently large.
	\end{remark}
	\begin{remark}
Theorem \ref{the:4.3} suggests that we can choose the sample size according to the following proportions	
\[
N_0:N_1:...:N_m=1:2^{-(2p+1)/4}:...:\sqrt{1+\gamma} 2^{-(2p+1)m/4}.
\]
The parameter $\alpha$ requires separate treatment, because the expression of $K_{m+1}$ differs from those of $K_i$, $0\leq i\leq m$.
	\end{remark}
	
	Motivated by Theorem \ref{the:4.3}, we propose the second allocation rule. We allocate all the computational resources among $N_0$,...,$N_m$ according to 
	\[
	N_i \propto \sqrt{w_i} 2^{-(2p+1)i/4},
	\]
	and estimate $\alpha$ using
	\[
	\alpha=E(Y)\approx\sum_{n=0}^{m}E(\Delta_n)
	\]
	where $E(\Delta_n)$ can be estimated simultaneously with $\beta_n$ using the same $N_n$ samples. Therefore, no additional computational resources need to be allocated to $N_\alpha$. Although the estimator of $\alpha$ is correlated with that of $\beta_n$, the correlation is expected to be weak for sufficiently large $n$. 
	
  Finally, we propose a third approach, in which $\Delta_0,\Delta_1,...,\Delta_m$ are constructed using the same Brownian path. Specifically, we first generate $2^m$ Brownian increments to simulate $Y_m$. We then construct $Y_{m-1}$ by combining consecutive pairs of increments, with the $j$th increment at level $m-1$ obtained as the sum of the $(2j-1)$th and $2j$th increments at level $m$. Repeating this procedure recursively, we obtain $Y_{m-2},\ldots,Y_0$. As a consequence of the coupling across levels, the estimators of $\beta_0,\ldots,\beta_m$ and $\alpha$ are generally dependent.

  The computational efficiency of these sample allocation methods will be compared numerically in the next section.
	
	\section{Numerical Experiments}
    In this section, we conduct numerical experiments to verify the convergence rates derived in Section \ref{sect4} and to examinate the efficiency of the non-uniform sample allocation methods proposed in Section \ref{sect5}. Our experiments are based on three stochastic differential equations: the Black-Scholes model, the Heston model and the CIR model. These models are widely applied in finance.
	
	Recall that the computational efficiency loss is defined as
	\begin{equation*}
		\begin{split}
			\Delta g :&= g(\hat{p}) - g(p^*)\\
			\quad &=\left(\sum_{n=0}^{\infty}\frac{\beta_n}{\hat{p}_n} - \alpha^2\right)\left(\sum_{n=0}^{\infty}t_n\hat{p}_n\right)-\left(\sum_{n=0}^{\infty}\frac{\beta_n}{p_n^*} - \alpha^2\right)\left(\sum_{n=0}^{\infty} t_n p_n^*\right).
		\end{split}
	\end{equation*} 
As the parameters \(\beta_n\) are unknown,  we use at least \(\mathcal{M} = 10^7\) samples to construct reference values for the exact values of parameters \(\beta_n\) as follows:
	\[
	\beta_n \approx \beta_n^{\mathrm{ref}}= 
	\begin{cases}
		\displaystyle \frac{1}{\mathcal{M}}\sum_{i=1}^{\mathcal{M}} \Delta_{n,i}^2, & n = 0,1,\dots,14,\\[10pt]
		\dfrac{\beta_{14}^{\mathrm{ref}}}{4^{p(n-14)}}, & n \ge 14,
	\end{cases}
	\]
	where \(\Delta_{n,i}\) is the increment at level \(n\) in the \(i\)-th simulation, and \(p>1/2\) is the strong convergence rate of the time-discrete scheme. Thus, the reference values of the tail parameters follow the geometric decay
	\[
	\beta_{n+k}^{\mathrm{ref}} = \beta_n^{\mathrm{ref}} / (4^p)^k, \quad n\geq 14.
	\]
	Based on these reference parameters, we compute the reference value \(g_{\mathrm{ref}}\) of the objective function \(g(p^*)\), i.e.,
	\[ 
	g(p^*) \approx g_{\mathrm{ref}} := \left(\sum_{n=0}^{14}\frac{\beta_n^{\mathrm{ref}}}{p_n^{\mathrm{ref}}}+\frac{\beta_{14}^{\mathrm{ref}}}{p_{14}^{\mathrm{ref}}}\cdot\gamma - (\alpha^{\mathrm{ref}})^2\right)\left(\sum_{n=0}^{14}t_np_n^{\mathrm{ref}}+t_{14}p_{14}^{\mathrm{ref}}\cdot\gamma\right).
	\]
	where 
	\[
	p_n^{\mathrm{ref}}=\sqrt{\frac{\beta_n^{\mathrm{ref}}}{\alpha^2+c_{\mathrm{ref}}t_n}}, \qquad n=0,1,\dots,14,
	\]
	in which $c_{\mathrm{ref}}$ satisfies the equation
	\[
	\sum_{n=0}^{14}p_n^{\mathrm{ref}}+s\cdot p_{14}^{\mathrm{ref}}=1.
	\]	
    Here, the exact values of $\alpha=E(Y)$ for all models considered can be computed with very high accuracy without Monte Carlo estimation.
    
	After establishing the reference parameters, we design numerical experiments for each model, aiming to examine the asymptotic orders of $\Delta g$ with respect to the truncation level \(m\) and the sample size \(N\), respectively. The detailed settings are as follows.
	
	\paragraph{Convergence with respect to \(m\).}
	We investigate the convergence rate with respect to the truncation level \(m\). Specifically, let
	\[
	\Delta g_{\mathrm{det}}(m)=|g(p(m))-g(p^*)|.
	\]
	We plot $\log_2(\Delta g_{\mathrm{det}}(m))$ against $m$, and the slope of the resulting curve represents the convergence rate. Theoretically, we have $\Delta g_{\mathrm{det}}(m)=O(2^{-m(p-1/2+2\eta)})$, where $\eta=\min(1,\rho)$, and $\rho$ denotes the non-leading order of $\beta_n$, i.e.,
	\[
	\beta_n=C_\beta 2^{-2pn}\bigl(1+O(2^{-\rho n})\bigr).
	\]
	To examinate whether $\rho=1$, we plot $\log_2 |\beta_n-4^{p}\beta_{n+1}|$ against $n$ and use the slope of the resulting curve to estimate $\rho$, where the slope is $-(2p+\rho)$.

	\paragraph{Convergence with respect to \(N\).}
	For each fixed truncation level \(m\), we illustrate the convergence rate of 
	\[
	|E(g(\hat{p})-g(p(m)))|
	\]
	with respect to the sample size $N$. Specifically, for each fixed $N$ and $m$, we obtain the estimates \(\hat{\alpha}\) and \(\hat\beta_{n},n=0,1,..,m\), using \(N\) samples at each level, and then calculate the estimated probability mass function $\hat p_N=(\hat p_{0,N},\ldots,\hat p_{m,N})$.
	
	For each pair of \(m\) and \(N\), the expectation is then approximated by
	\[
	E(g(\hat{p})-g(p(m)))\approx \frac{1}{N_{\mathrm{exp}}}\sum_{j=1}^{N_{\mathrm{exp}}}\Delta g_N^{(j)},
	\]
where $\Delta g_N=g(\hat{p})-g(p(m))$, and \(N_{\mathrm{exp}}\) is the number of ‌repetitions. A small $N_{\mathrm{exp}}=1000$ is typically sufficient to yield estimated Monte Carlo standard errors that are significantly smaller than the absolute values of the corresponding sample means. We verify whether \(E(g(\hat{p})-g(p(m)))=O(N^{-1})\), as predicted by the theoretical analysis.
	
	\paragraph{Sample allocation methods.}
	We compare five sample allocation methods with a fixed total computational cost $C$, which are proposed in Section \ref{sect5}. The computational cost to draw a sample of $\Delta_n$ is set to be $c_n=2^n$, and we fix the discretization level used to estimate \(\alpha\) at \(M=7\).
	
\begin{enumerate}
  \item Uniform-allocation method ($A_1$): the same number of samples is assigned to estimate $\beta_0,...,\beta_m$ and $\alpha$, i.e.,
	\[
	N_i^{(A_1)}=N_\alpha^{(A_1)}=\left\lfloor\frac{C}{\sum_{k=0}^{m}c_k+c_M}\right\rfloor,\qquad i=0,\ldots,m,
	\]
	and these parameters are estimated independently. 
  \item Exact-allocation method ($A_2$): the number of samples is assigned according to
    \[
	N_i^{(A_2)}\propto\sqrt{\frac{K_i\sigma_i^2}{c_i}},\qquad i\in\{0,\ldots,m,\alpha\},
	\]
	where $K_i$ is from Theorem \ref{the:5.1}. 
  \item Exact-allocation-beta method ($A_{2,\beta}$): the number of samples is assigned to estimate $\beta_0,...,\beta_m$ according to
    \[
	N_i^{(A_{2,\beta})}\propto\sqrt{\frac{K_i\sigma_i^2}{c_i}},\qquad i\in\{0,\ldots,m \},
	\]
	where $K_i$ is from Theorem \ref{the:5.1}. The same increment samples are reused to estimate
	\[
	\alpha\approx\sum_{n=0}^{m}E(\Delta_n)\approx\sum_{n=0}^{m}\frac{1}{N_n}\sum_{j_n=1}^{N_n}\Delta_{n}^{(j_n)}.
	\]
	Hence, no additional computational cost is required to estimate $\alpha$.
  \item Asymptotic-allocation method ($A_3$): the number of samples is assigned to estimate $\beta_0,...,\beta_m$ based on Theorem \ref{the:4.3}, i.e.,
   \[
	N_i^{(A_3)}\propto\sqrt{w_i}\,2^{-(2p+1)i/4},\qquad i=0,\ldots,m,
	\]
	where $w_i=1+\gamma$, if $i=m$, and $w_i=1$, otherwise. The estimate of $\alpha$ is constructed by reusing the increment samples as in $A_{2,\beta}$.
  \item Identical-path method ($A_4$): the same number of samples is assigned to estimate $\beta_0,...,\beta_m$, but $\Delta_0,\Delta_1,...,\Delta_m$ are constructed using the same Brownian path. The estimate of $\alpha$ is constructed by reusing the increment samples.
\end{enumerate}
	
	Moreover, we consider the method ($A_0$) using the subcanonical probability mass function 
	\[
	\hat{p}_n=(1-2^{-(2p+1)/2})2^{-n(2p+1)/2},
	\]
	without any prior estimation of $\beta_0,..\beta_m$ or $\alpha$. This method preserves the optimal decay rate of the tail distribution, while maintaining the property that $\sum_{n=0}^{\infty}\hat{p}_n=1$, and is therefore useful in practice.
	
	We examinate the efficiency of these sample allocation methods by plotting their corresponding $E(\Delta g)$ against the total computational cost $C$, for a fixed \(m\). To estimate $E(\Delta g)$ from each method, $1000$ repetitions of $\Delta g$ are used, which are sufficient to produce an accurate estimate with a significantly small Monte Carlo standard deviation. 
	
	\subsection*{Black-Scholes model}
	
	The Black--Scholes model \citep{black2019pricing} is of the form
	\[
	dS(t)=rS(t)\,dt+\sigma S(t)\,dB(t),
	\]
	where we set
	\[
	S(0)=1,\quad K=1,\quad r=0.05,\quad T=1,\quad \sigma=0.25.
	\]
	The path of $S$ is approximated by the Milstein scheme
	\[
	\begin{aligned}
		S_h((j+1)h)
		&=S_h(jh)+rS_h(jh)h
		+\sigma S_h(jh)\bigl(B((j+1)h)-B(jh)\bigr)\\
		&\quad+\frac{1}{2}\sigma^2S_h(jh)
		\left(\bigl(B((j+1)h)-B(jh)\bigr)^2-h\right),
	\end{aligned}
	\]
	where \(j=1,2,\ldots,T/h-1\) and \(S_h(0)=S(0)\). We use it to compute the European call option price $E(Y)$, with $Y=e^{-rT}(S(T)-K)^+$.
	
	We first examine the relevant convergence rates, as illustrated in Figure \ref{fig1}. From the top-left panel of Figure \ref{fig1}, we observe that $\beta_n$ converges with order $2$, corresponding to $p=1$, and the non-leading order of $\beta_n$ is $\rho=1$, since the slope of the corresponding line is $2p+\rho=3$. Furthermore, our theoretical analysis predicts that $\Delta g_{\mathrm{det}}(m)=O(2^{-m(p-1/2+2\eta)})$, with $\eta=\min(1,\rho)$, and that $|E(\Delta g_N)|=O(N^{-1})$. From the top-right panel of Figure \ref{fig1}, we see that $\Delta g_{\mathrm{det}}(m)$ converges with order $2.5$, and from the bottom panel, we find that $|E(\Delta g_N)|$ converges with order $1$ for each $m=3,7,14$. Interestingly, the curves corresponding to $m=7$ and $m=14$ nearly coincide, suggesting that increasing $m$ beyond $7$ has a negligible effect on this error component. Overall, all numerical orders are consistent with the theoretical ones.
	
		\begin{figure}
		\centering
		\includegraphics[width=0.45\textwidth]{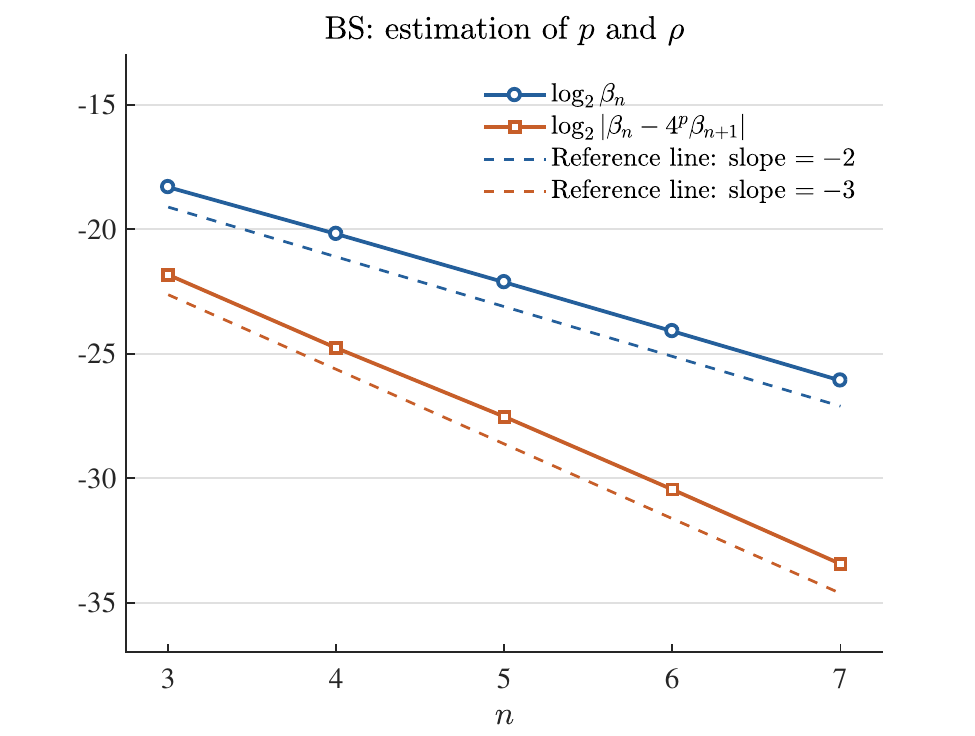}\hfill
		\includegraphics[width=0.45\textwidth]{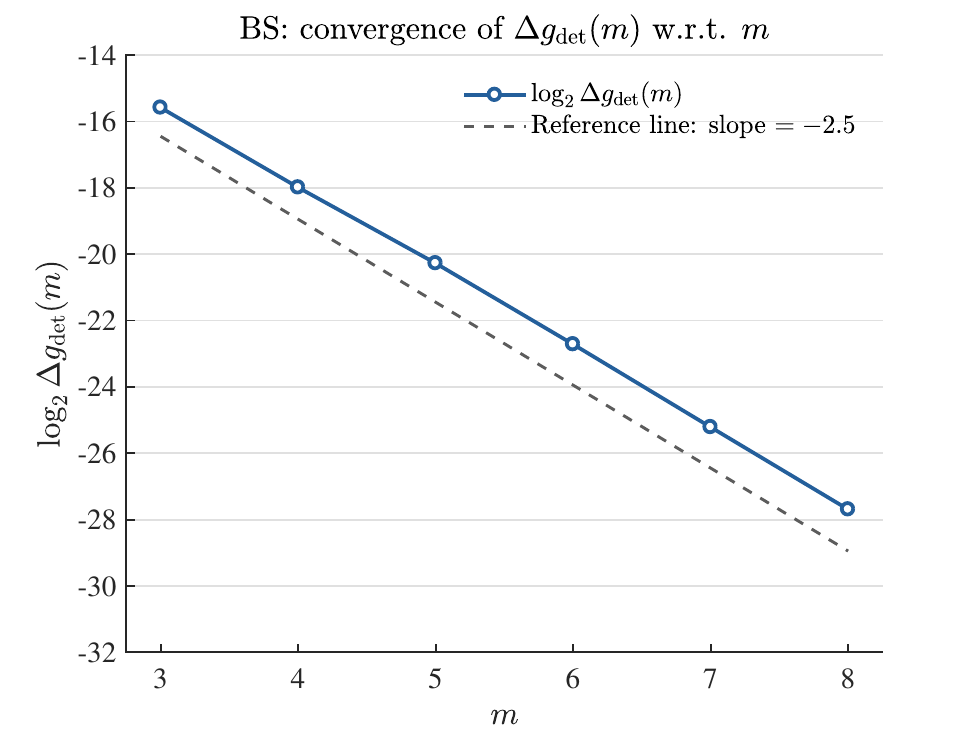}\\[0.5em]
		\includegraphics[width=0.45\textwidth]{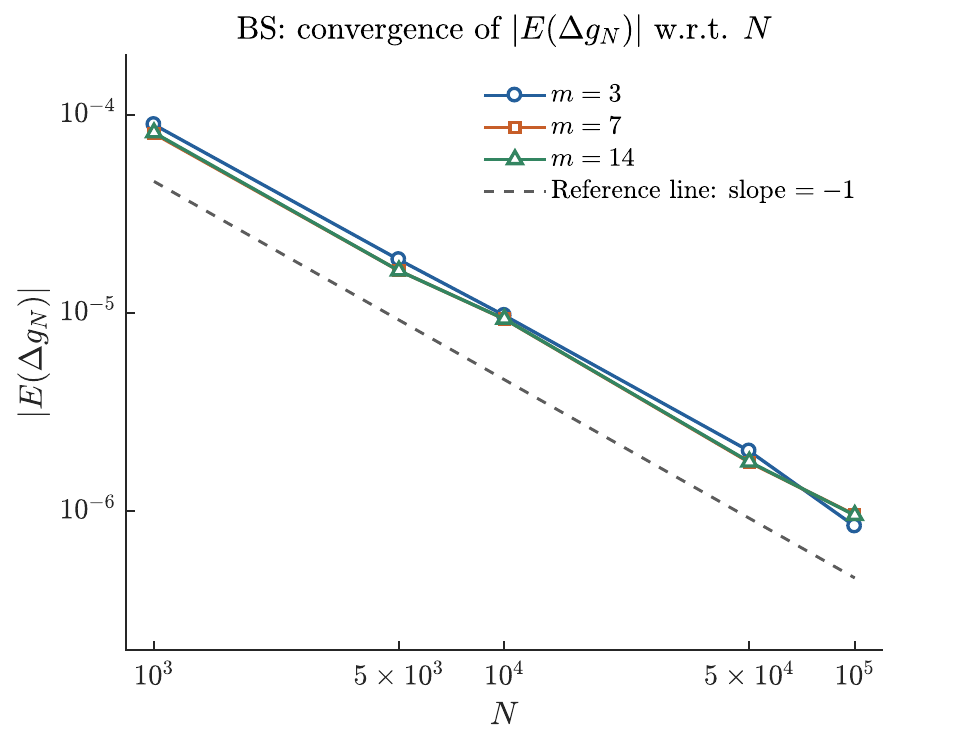}
		\caption{Convergence rates for the Black-Scholes model.}\label{fig1}
	\end{figure}
	
	We then investigate the efficiency of five sample allocation methods, with the results presented in Figure \ref{fig4}. All five methods outperform the subcanonical method $A_0$, which requires no prior estimation. Furthermore, for each method, $E(\Delta g)$ approaches the deterministic truncation error $\Delta g_{\mathrm{det}}(m)$ as the total computational cost tends to infinity. Among the five methods, all non-uniform allocation strategies outperform the uniform allocation method $A_1$. More specifically, when $m=3$, the Identical-path method $A_4$ achieves the highest efficiency, followed by the Exact-allocation-beta method $A_{2,\beta}$ and the Asymptotic-allocation method $A_3$. In contrast, when $m=7$, the latter two methods exhibit the best performance. Overall, method $A_3$ appears to be the most attractive choice in practice, owing to its simple allocation formula and ease of implementation.
	
		\begin{figure}
		\centering
		\includegraphics[width=0.5\textwidth]{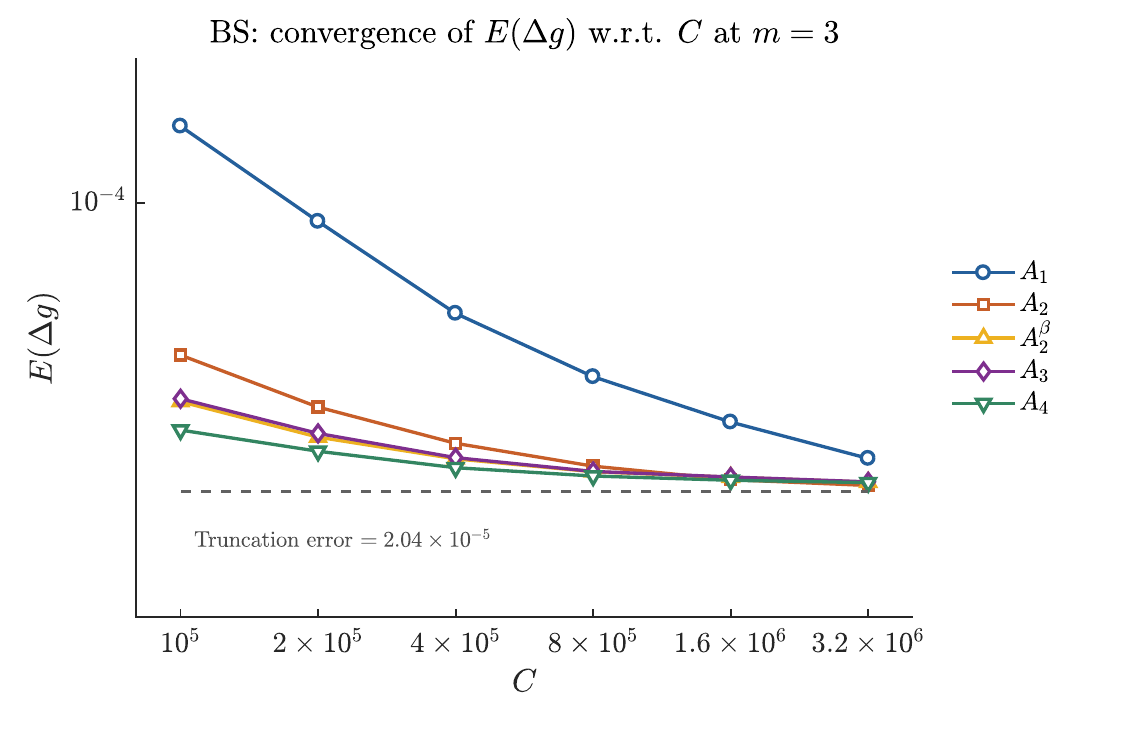}\hfill
		\includegraphics[width=0.5\textwidth]{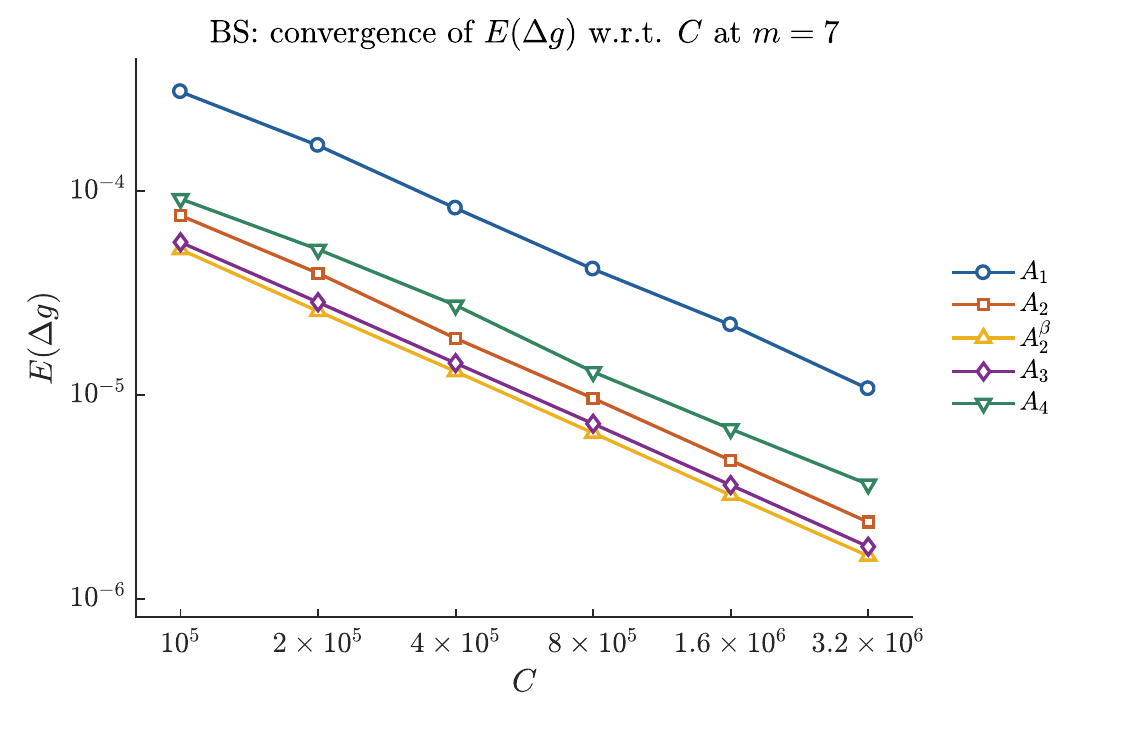}\\[0.5em]
		\caption{Comparison of sample allocation methods for the Black-Scholes model. $E(\Delta g)\approx 7.855\times 10^{-2}$ for method $A_0$; Truncation error $\approx 2.57\times 10^{-8}$ when $m=7$.}\label{fig4}
	\end{figure}
	
	It may be a bit surprising that the Exact-allocation method $A_{2}$ is less efficient than methods $A_{2,\beta}$ and $A_3$. One possible explanation is that $A_{2}$ allocates a relatively large number of samples to estimating $\alpha$, whose estimator exhibits a relatively large variance. Reusing the samples generated for estimating $\beta_n$ to estimate $\alpha$ may therefore lead to considerable computational savings. Nevertheless, when the same sampling strategy is employed for estimating $\alpha$, method $A_{2,\beta}$, based on the exact allocation formula, is slightly more efficient than method $A_3$, which uses an asymptotic allocation formula.

	\subsection*{Heston model}
	The Heston stochastic volatility model \citep{heston1993closed} is specified as
	\[
	\begin{aligned}
		dS(t) &= rS(t)\,dt+\sqrt{V(t)}S(t)\,dB_1(t),\\
		dV(t) &= \kappa(\theta-V(t))\,dt+\sigma\sqrt{V(t)}\,dB_2(t),
	\end{aligned}
	\]
	where \(B_1(t)\) and \(B_2(t)\) are mutually independent standard Brownian motions, and we set
	\[
	\begin{aligned}
		S(0) &= 1,     & T      &= 1,    & K      &= 1,     & r      &= 0.05,\\
		V(0) &= 0.04,  & \kappa &= 1,    & \theta &= 0.04,  & \sigma &= 0.2.
	\end{aligned}
	\]
	Our objective is to estimate $E(Y)$, where $Y=e^{-rT}(S(T)-K)^+$.
	
	The level increments \(\Delta_n\) are constructed using exact simulation of the variance process $V$ at the time-grid points, combined with the stochastic trapezoidal discretization considered in \citep{zheng2023multilevel}. Let
	\[
	I_h:=\sum_{j=0}^{T/h-1}\frac{V(jh)+V((j+1)h)}{2}h.
	\]
	Since \(B_1\) and \(B_2\) are independent, the terminal log-price is approximated by
	\[
	\ln S_h(T)=\ln S(0)+rT-\frac12I_h+\sqrt{I_h}\,Z,
	\qquad Z\sim N(0,1).
	\]
	The fine and coarse approximations use the same variance values at their common time points and the same standard normal random variable \(Z\).

   The convergence results for the Heston model are presented in Figure \ref{fig2}. The numerical results indicate that $\beta_n$ converges with order $2$, and the non-leading order of $\beta_n$ appears to be $\rho=1$, despite some fluctuations when $n$ is large. These fluctuations may be attributed to the small coefficient of the higher-order term in the asymptotic expansion of $\beta_n$, which makes an accurate estimation of $\rho$ particularly challenging. Indeed, even with $5\times 10^8$ samples used to estimate each $\beta_n$, the higher-order behavior remains difficult to resolve numerically.  Furthermore, $\Delta g_{\mathrm{det}}(m)$ converges with order $2.5$, and $|E(\Delta g_N)|$ converges with order $1$, consistent with the theoretical ones. 
   
   	\begin{figure}
		\centering
		\includegraphics[width=0.45\textwidth]{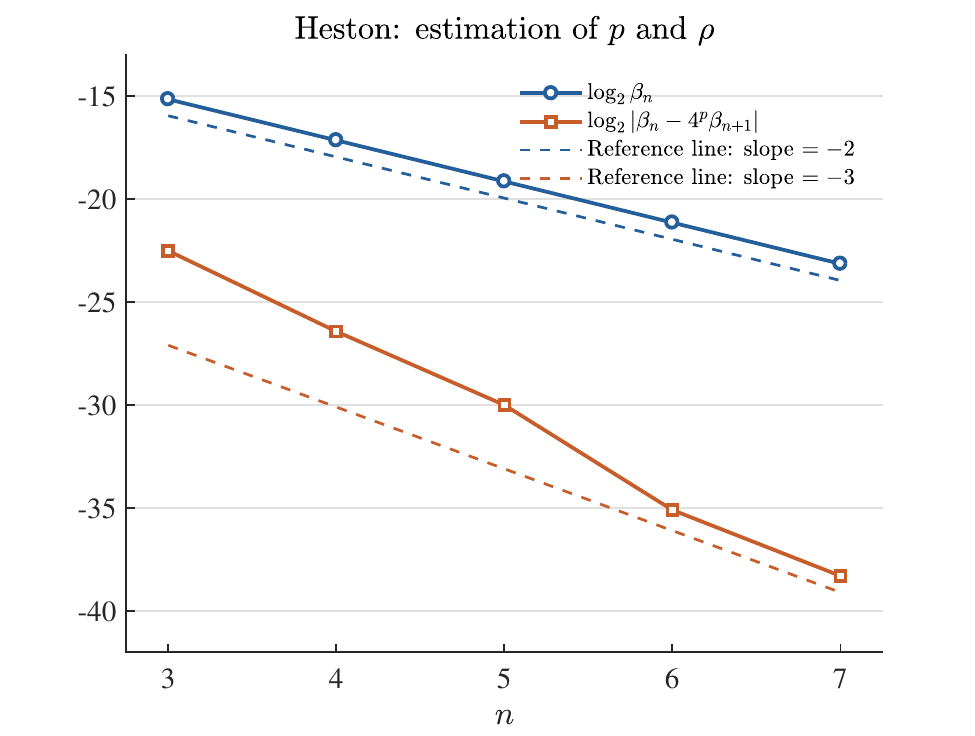}\hfill
		\includegraphics[width=0.45\textwidth]{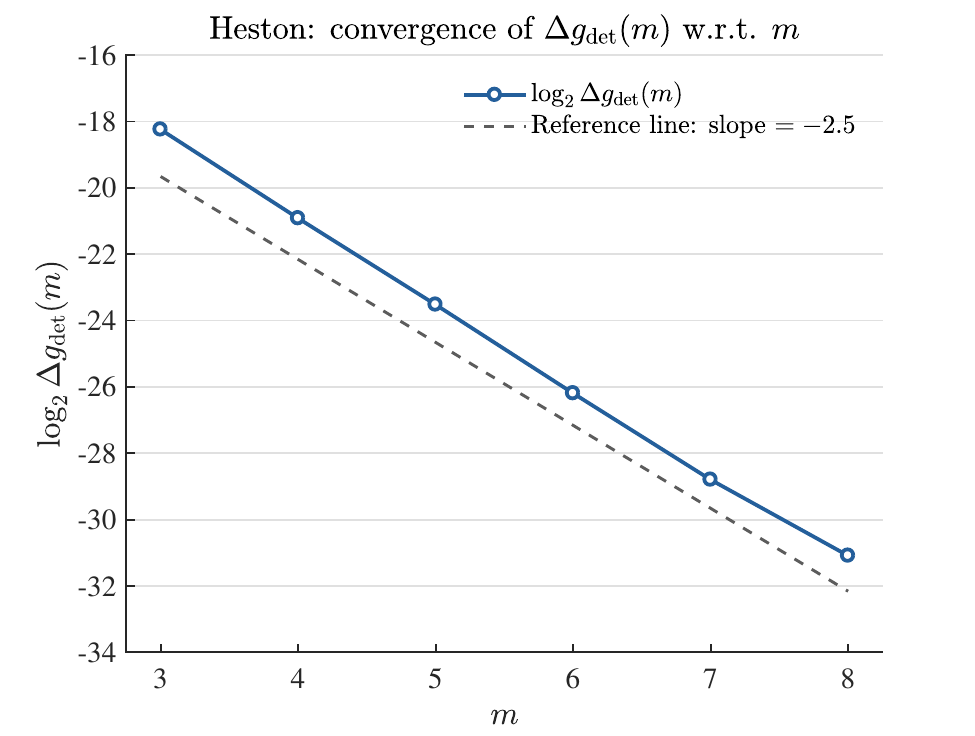}\\[0.5em]
		\includegraphics[width=0.45\textwidth]{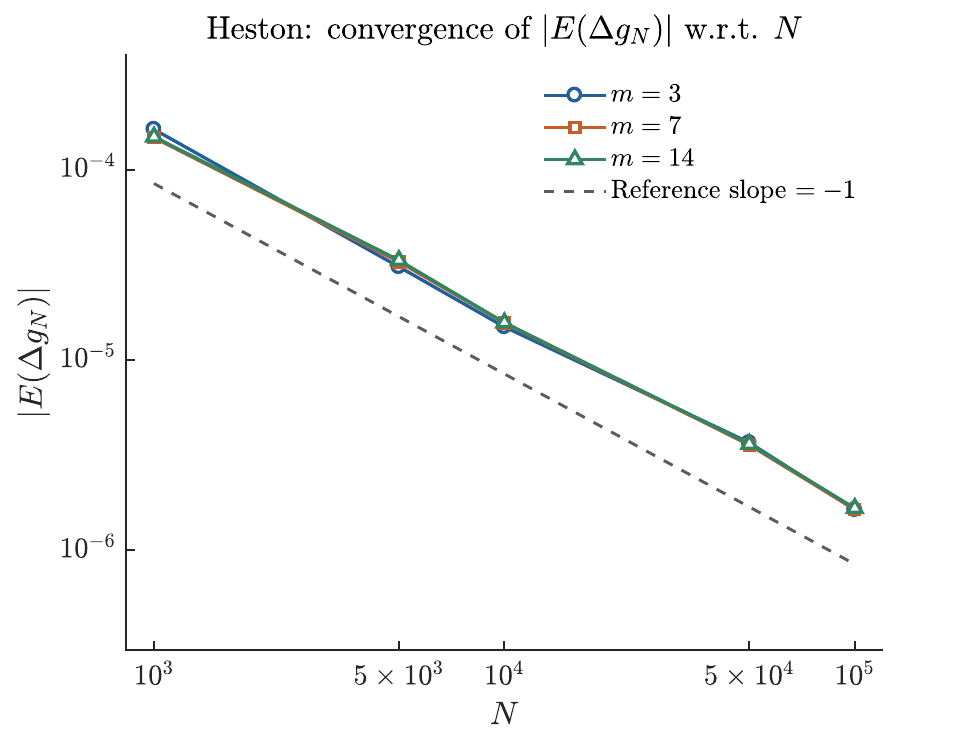}
		\caption{Convergence rates for the Heston model.}\label{fig2}
	\end{figure}
   
	The efficiency comparison of the different sample allocation methods is presented in Figure \ref{fig5}. The results are broadly consistent with those obtained for the Black–Scholes model. In particular, method $A_4$ achieves the highest efficiency when $m=3$, whereas methods $A_{2,\beta}$ and $A_3$ exhibit the best performance when $m=7$.

	\begin{figure}
		\centering
		\includegraphics[width=0.5\textwidth]{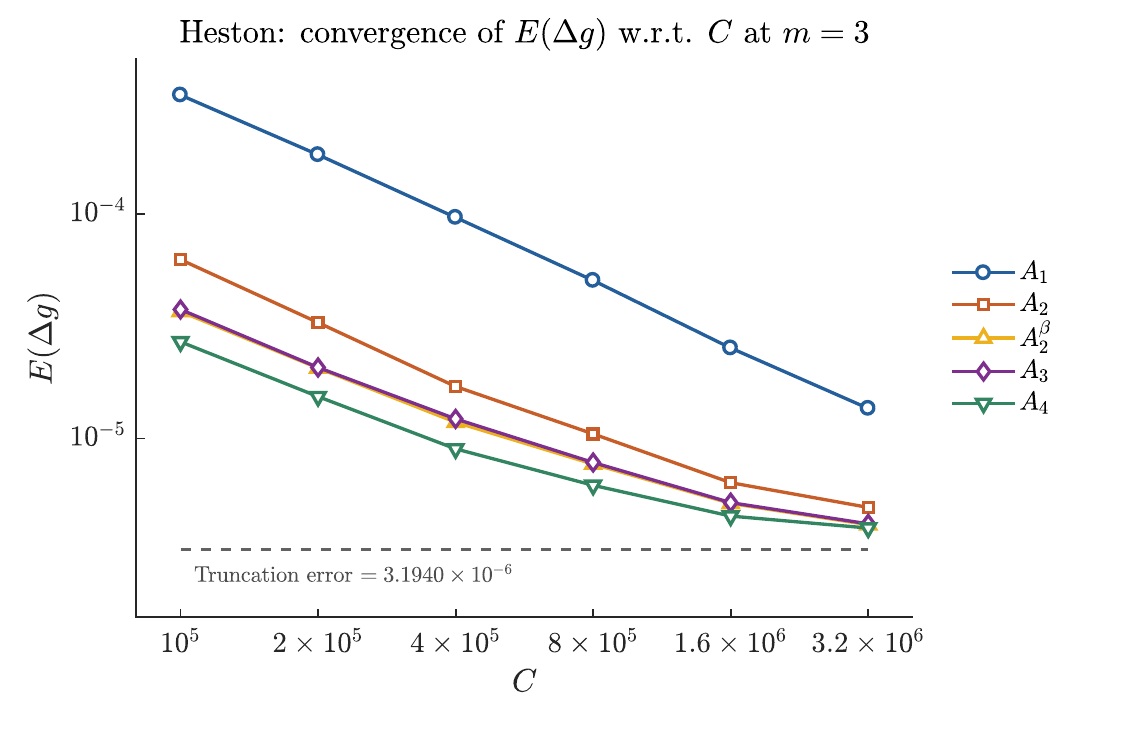}\hfill
		\includegraphics[width=0.5\textwidth]{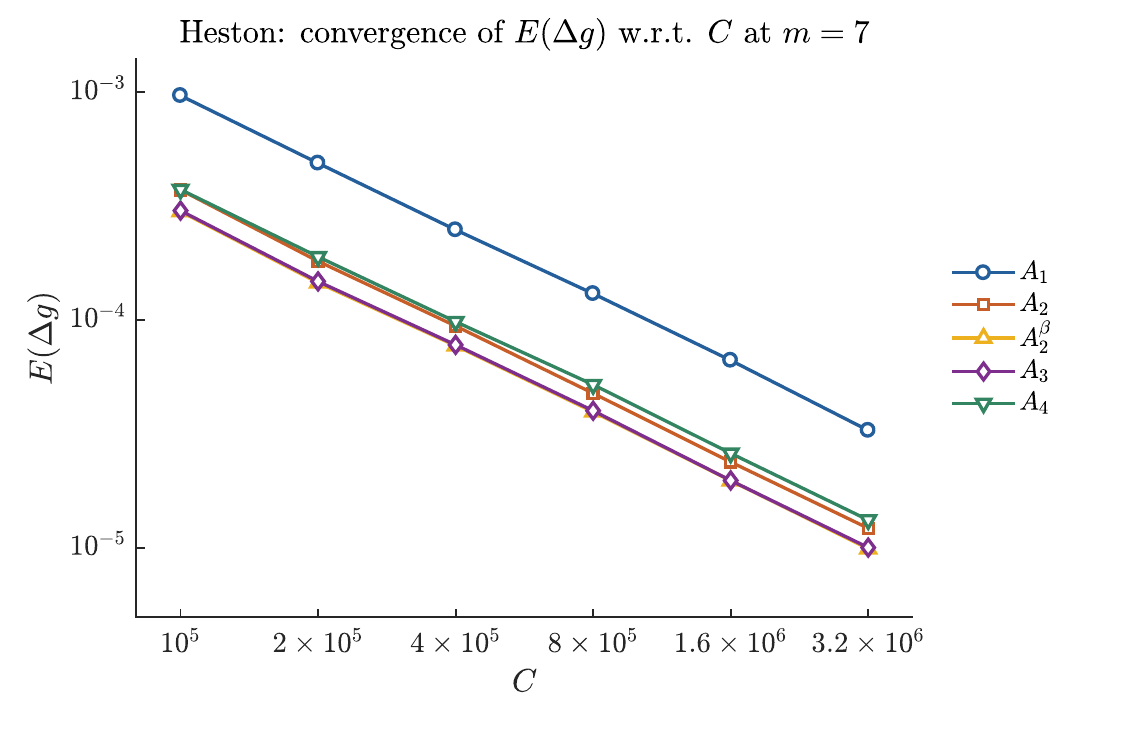}\\[0.5em]
		\caption{Comparison of sample allocation methods for the Heston model. $E(\Delta g)\approx 3.841\times 10^{-2}$ for method $A_0$. Truncation error $\approx 2.14\times 10^{-9}$ when $m=7$.}\label{fig5}
	\end{figure}

	\subsection*{CIR model}
	Finally, we consider the Cox-Ingersoll-Ross (CIR) model \citep{cox1985theory}, which can be expressed as
	\[
	dX(t)=\kappa\bigl(\theta-X(t)\bigr)\,dt+\sigma\sqrt{X(t)}\,dB(t).
	\]
	We set the parameters
	\[
	X(0)=0.04,\quad K=0.03,\quad r=0.05,\quad T=1,\quad
	\kappa=5,\quad \theta=0.04,\quad \sigma=0.2.
	\]
	and the target payoff functional is $Y=e^{-rT}\bigl(X(T)-K\bigr)^+$.

	For numerical approximation of the CIR model, we employ the drift-implicit Milstein scheme proposed by \citep{neuenkirch2014}
	\[
	X_{j+1}^{(n)}
	=
	\frac{X_j^{(n)}+\kappa\theta h
		+\sigma\sqrt{X_j^{(n)}}\,\Delta B_j
		+\dfrac{\sigma^2}{4}\bigl((\Delta B_j)^2-h\bigr)}
	{1+\kappa h},
	\qquad j=0,1,\ldots,2^n-1.
	\]
	where $h=T/2^n$.
	
	The numerical results are presented in Figure \ref{fig3} and Figure \ref{fig6}. All numerical rates are consistent with the theoretical rates, and the performance of these sample allocation methods is similar to the findings for the Black-Scholes model and the Heston model. Unlike the previous two models, we consider $m=5$ rather than $m=3$ in the left panel of Figure \ref{fig6}, since the truncation error dominates when $m=3$, causing the curves for different allocation methods to nearly coincide with the horizontal line representing the truncation error.

\begin{figure}
	\centering
	\includegraphics[width=0.45\textwidth]{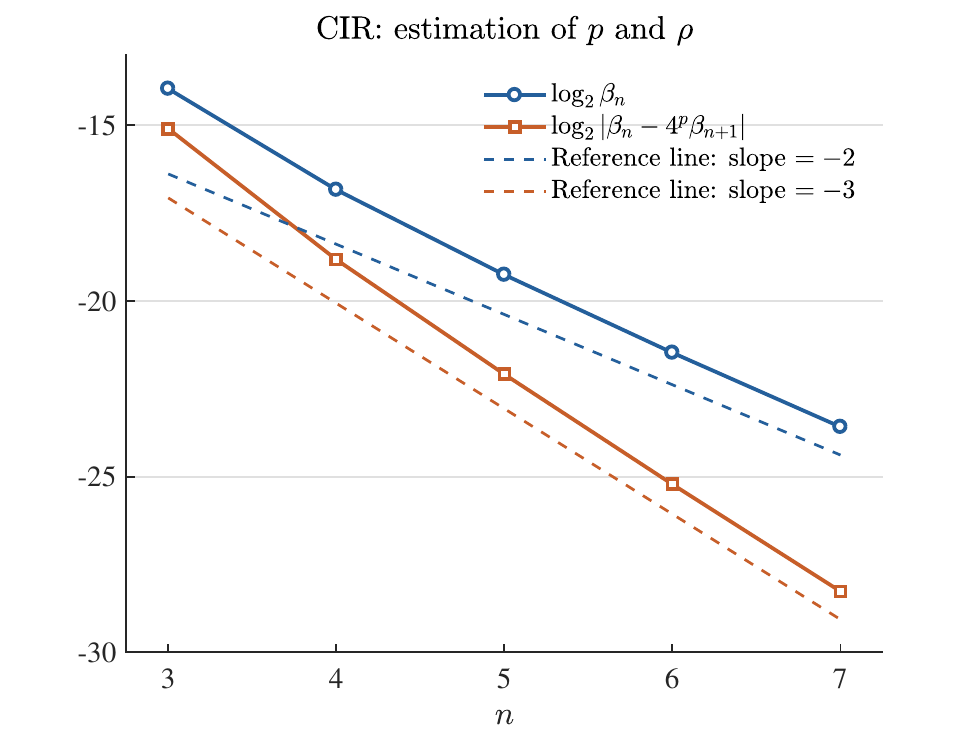}\hfill
	\includegraphics[width=0.45\textwidth]{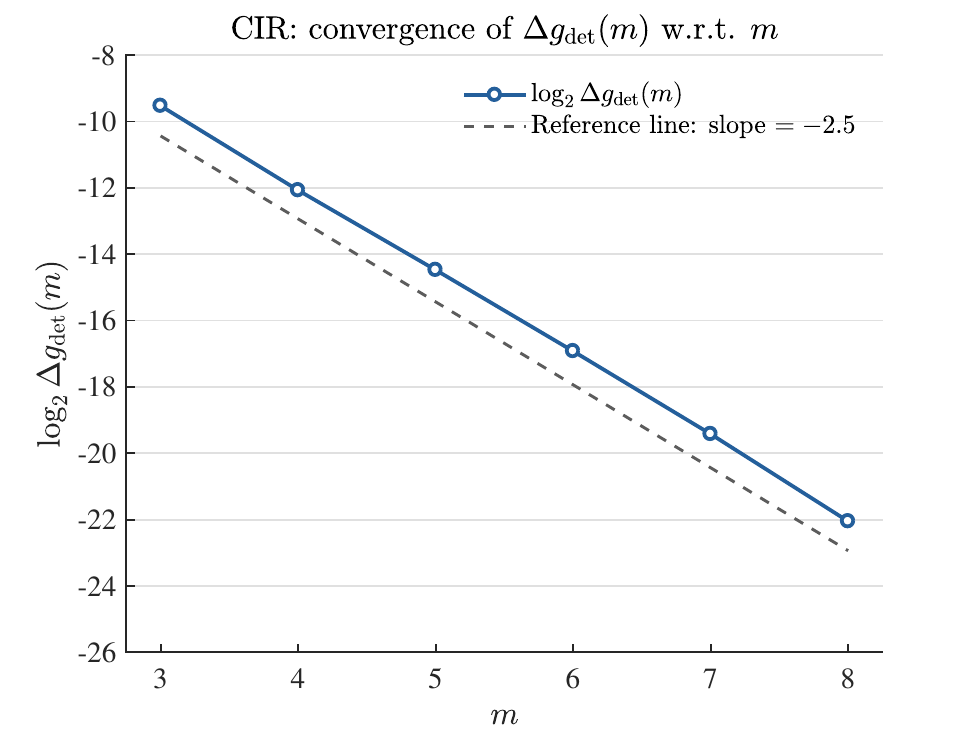}\\[0.5em]
	\includegraphics[width=0.45\textwidth]{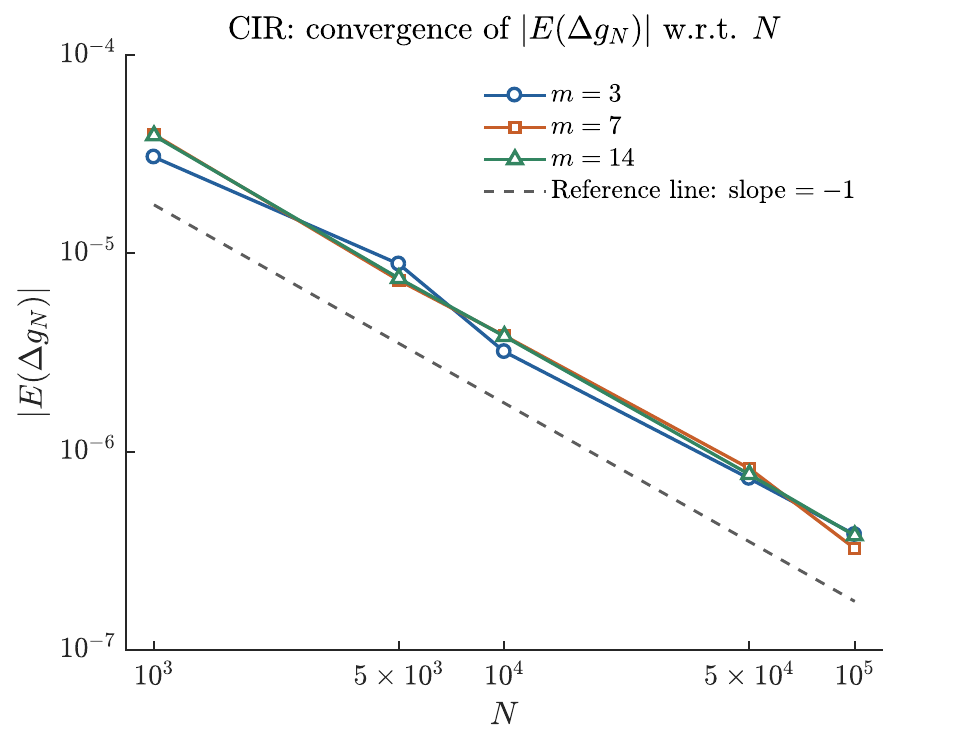}
	\caption{Convergence rates for the CIR model.}\label{fig3}
\end{figure}

	\begin{figure}
		\centering
		\includegraphics[width=0.5\textwidth]{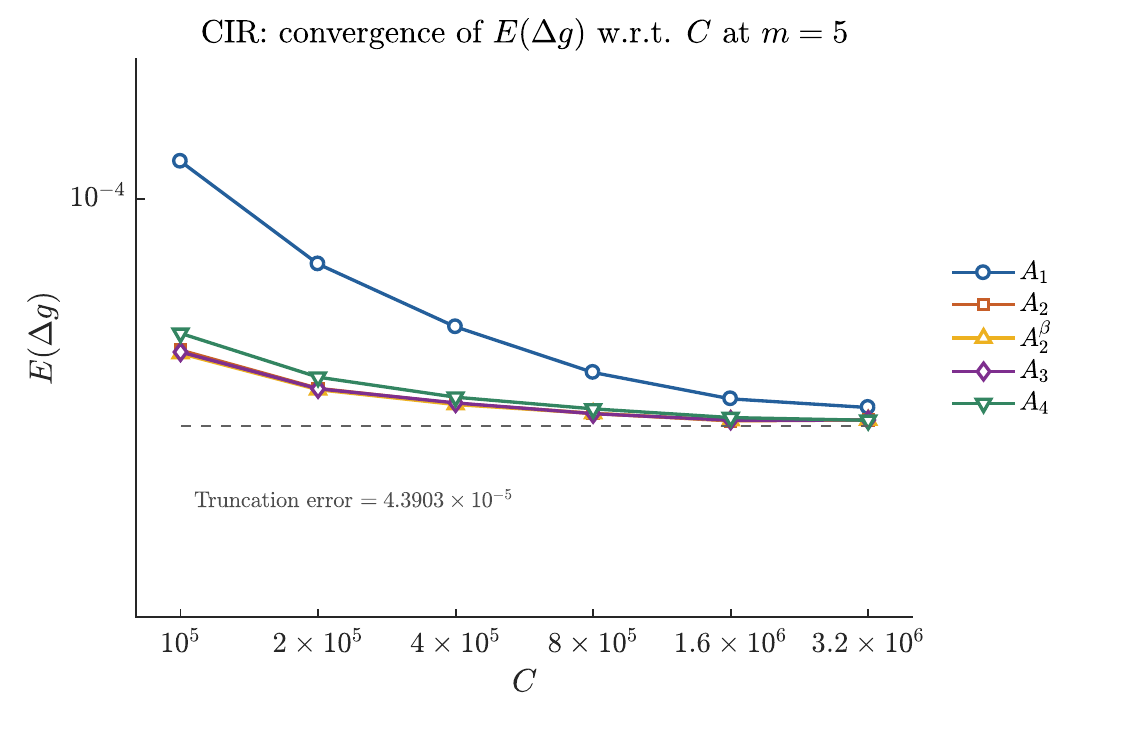}\hfill
		\includegraphics[width=0.5\textwidth]{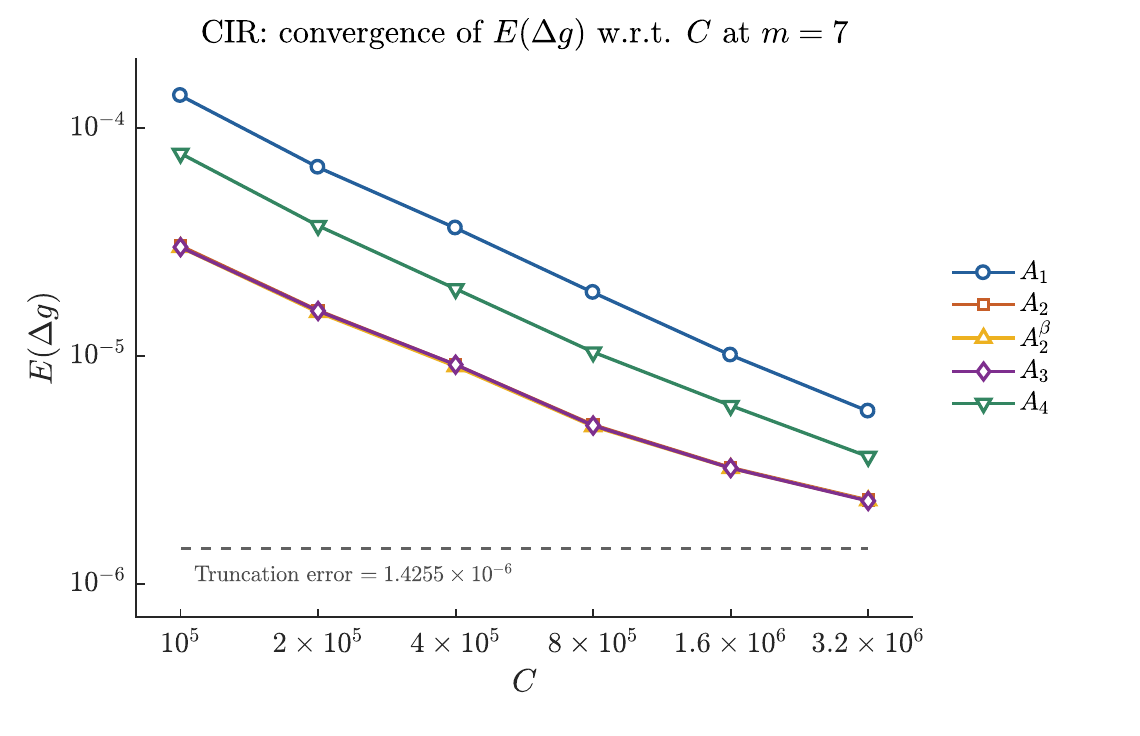}\\[0.5em]
		\caption{Comparison of sample allocation methods for the CIR model. $E(\Delta g)\approx 9.121\times 10^{-3}$ for method $A_0$.}\label{fig6}
	\end{figure}
	
	\section{Conclusion and Discussions}
	
	In this article, we have investigated the computational efficiency loss $\Delta g$ arising from the practical construction of the optimal randomization distribution for the single term estimator. The computational efficiency is measured by the variance-work product, which is the product of two infinite summations. Our analysis accounts for both parameter estimation error and the truncation error introduced by a geometric approximation of the tail distribution. Under mild assumptions on the estimated parameters and Assumption \ref{assu:3.1} that
\[
	\beta_{n} = C_{\beta}\cdot2^{-2pn}(1+O(2^{-\rho n})),\quad p>1/2, \quad \rho>0, \quad t_{n} = 2^n,
\]
we establish the bound for the expected efficiency loss
\[
E(\Delta g)\leq C_1N^{-1}+C_2 2^{-m(p-\frac12+2\eta)},
\]
where the constants are independent of the sample size $N$ and truncation level $m$. Here, $\eta=\min(\rho,1)$.  This bound quantifies the separate contributions of parameter estimation and tail approximation. In particular, cancellation of the leading-order truncation term yields a convergence rate that depends on the non-leading-order decay of $\beta_n$. Consequently, their leading-order decay alone does not fully characterize the effect of truncation on computational efficiency. 

Based on the leading-order parameter estimation error, we have proposed several non-uniform sample allocation strategies that aim to minimizing $E(\Delta g)$ given a fixed computational budget. Numerical experiments for several SDEs models suggest that the Asymptotic allocation method $A_3$ seems to be among the most efficient methods, and it has a simple expression, hence quite easy to implement.  Specifically, for method $A_3$, the sample size $N_i$ at level $i$ is determined by 
    \[
	N_i^{(A_3)}\propto\sqrt{w_i}\,2^{-(2p+1)i/4},\qquad i=0,\ldots,m,
	\]
	where $w_i=\frac{1}{1-2^{-(p-1/2)}}$, if $i=m$, and $w_i=1$, otherwise. The estimate of $\alpha$ is constructed by reusing the samples for $\beta_n$.
	
	Several directions remain for future investigation. First, motivated by the bound on the efficiency loss, it is natural to consider balancing the two sources of error by increasing the sample size $N$ as the truncation level $m$ increases. However, it is not clear how to optimally balance these two sources of error.  Second, a more challenging problem is to determine how the computational budget should be allocated as a function of $m$ for non-uniform allocation strategies.  Finally, another important direction is to extend the error analysis to non-uniform allocation schemes when the estimated parameters are dependent.
	
\section*{Appendix A}
\subsection*{Proof of Theorem 5.1}
\label{sec:theorem-proof}

\begin {proof}
Fix $m\geq0$ and write $p_n=p_n(m)$ for $0\leq n\leq m$.
As in Section 5 of the main text, let
\begin{equation}\label{eq:weights}
 s=\frac{2^{-(p+1/2)}}{1-2^{-(p+1/2)}},\qquad
 \gamma=\frac{2^{-(p-1/2)}}{1-2^{-(p-1/2)}},\qquad p>\frac12,
\end{equation}
and set
\begin{equation}\label{eq:level-weights}
 \ell_n=\begin{cases}1,&0\leq n<m,\\1+s,&n=m,\end{cases}
 \qquad
 w_n=\begin{cases}1,&0\leq n<m,\\1+\gamma,&n=m.\end{cases}
\end{equation}
The probabilities satisfy
\begin{equation}\label{eq:normalization}
 p_n=\left(\frac{\beta_n}{\alpha^2+c(m)t_n}\right)^{1/2},
 \qquad \sum_{n=0}^m\ell_np_n=1.
\end{equation}
For the geometric-tail approximation in Section 5, recall that
\begin{equation}\label{eq:objective}
 g_m(p)=V_mT_m,\qquad
 V_m=\sum_{n=0}^m w_n\frac{\beta_n}{p_n}-\alpha^2,\qquad
 T_m=\sum_{n=0}^m w_nt_np_n,
\end{equation}
and
\begin{equation}\label{eq:Q}
 Q=\sum_{n=0}^m\ell_n\frac{t_np_n^3}{\beta_n}.
\end{equation}
Let $\theta=(\beta_0,\ldots,\beta_m,\alpha)^\top$,
$G=\nabla p(\theta)$, and $H=\nabla^2g_m(p(m))$, so that
\[
 G_{ni}=\frac{\partial p_n}{\partial\theta_i},\qquad
 H_{nr}=\frac{\partial^2g_m}{\partial p_n\partial p_r},\qquad
 \theta_{m+1}=\alpha.
\]
All derivatives in this section are evaluated at the true parameter vector
$\theta$. 

\subsubsection*{Derivative identities}
\label{subsec:shared-derivatives}

Differentiating $p_n$ gives
\begin{align}
 \frac{\partial p_n}{\partial\beta_i}
 &=\delta_{ni}\frac{p_n}{2\beta_n}
 -\frac{t_np_n^3}{2\beta_n}\frac{\partial c(m)}{\partial\beta_i},
 \qquad 0\leq i\leq m,\label{eq:first-p-beta}\\
 \frac{\partial p_n}{\partial\alpha}
 &=-\frac{p_n^3}{\beta_n}
 \left(\alpha+\frac{t_n}{2}\frac{\partial c(m)}{\partial\alpha}\right).
 \label{eq:first-p-alpha}
\end{align}
Here $\delta_{ni}$ is the Kronecker delta.
Differentiating the normalization condition in
\eqref{eq:normalization} yields
\[
 0=\frac{\ell_ip_i}{2\beta_i}
       -\frac Q2\frac{\partial c(m)}{\partial\beta_i},\qquad
 0=-\alpha\sum_{n=0}^m\ell_n\frac{p_n^3}{\beta_n}
       -\frac Q2\frac{\partial c(m)}{\partial\alpha}.
\]
Since $Q>0$, it follows that
\begin{equation}\label{eq:first-c}
 \frac{\partial c(m)}{\partial\beta_i}=\frac{\ell_ip_i}{\beta_iQ},
 \qquad
 \frac{\partial c(m)}{\partial\alpha}
 =-\frac{2\alpha}{Q}\sum_{n=0}^m\ell_n\frac{p_n^3}{\beta_n}.
\end{equation}
Thus the entries of $G$ are
\begin{align}
 G_{ni}&=\frac{p_i}{2\beta_i}
 \left(\delta_{ni}-\frac{\ell_it_np_n^3}{\beta_nQ}\right),
 \qquad 0\leq i\leq m,\label{eq:G-beta}\\
 G_{n,m+1}&=-\alpha\frac{p_n^3}{\beta_n}
 \left(1-\frac{t_n}{Q}\sum_{r=0}^m\ell_r\frac{p_r^3}{\beta_r}\right).
 \label{eq:G-alpha}
\end{align}

We also record the mixed second derivatives for later use in
Lemma 4.3. Direct differentiation of \eqref{eq:Q} gives
\begin{align}
 \frac{\partial Q}{\partial\beta_j}
 &=\frac{\ell_jt_jp_j^3}{2\beta_j^2}
 -\frac32\left(\sum_{r=0}^m\ell_r\frac{t_r^2p_r^5}{\beta_r^2}\right)
       \frac{\partial c(m)}{\partial\beta_j},\label{eq:Q-beta}\\
 \frac{\partial Q}{\partial\alpha}
 &=-3\sum_{r=0}^m\ell_r\frac{t_rp_r^5}{\beta_r^2}
 \left(\alpha+\frac{t_r}{2}\frac{\partial c(m)}{\partial\alpha}\right).
 \label{eq:Q-alpha}
\end{align}
Differentiating \eqref{eq:first-c} and substituting
\eqref{eq:first-p-beta}--\eqref{eq:Q-alpha}, we obtain
\begin{align}
 \frac{\partial^2c(m)}{\partial\beta_i\partial\beta_j}
 &=-\frac{\delta_{ij}}{2\beta_i}
      \frac{\partial c(m)}{\partial\beta_i}\notag\\
 &\quad+
 \frac{\partial c(m)}{\partial\beta_i}
 \frac{\partial c(m)}{\partial\beta_j}
 \left(\frac{3}{2Q}\sum_{r=0}^m\ell_r\frac{t_r^2p_r^5}{\beta_r^2}
 -\frac{t_ip_i^2}{2\beta_i}-\frac{t_jp_j^2}{2\beta_j}\right),
 \label{eq:c-betabeta}\\
 \frac{\partial^2c(m)}{\partial\beta_i\partial\alpha}
 &=\frac{\partial c(m)}{\partial\beta_i}
 \Bigg[-\frac{p_i^2}{\beta_i}
 \left(\alpha+\frac{t_i}{2}\frac{\partial c(m)}{\partial\alpha}\right)
 \notag\\
 &\hspace{5em}+\frac3Q\sum_{r=0}^m\ell_r\frac{t_rp_r^5}{\beta_r^2}
 \left(\alpha+\frac{t_r}{2}\frac{\partial c(m)}{\partial\alpha}\right)
 \Bigg].\label{eq:c-betaalpha}
\end{align}
The diagonal case $i=j$ of \eqref{eq:c-betabeta} is
\begin{equation}\label{eq:c-diagonal}
 \frac{\partial^2c(m)}{\partial\beta_i^2}
 =-\frac{\ell_ip_i}{2\beta_i^2Q}
 +\frac{\ell_i^2p_i^2}{\beta_i^2Q^2}
 \left(\frac3{2Q}\sum_{r=0}^m\ell_r\frac{t_r^2p_r^5}{\beta_r^2}
       -\frac{t_ip_i^2}{\beta_i}\right).
\end{equation}
Differentiating the normalization condition twice with respect to
$\alpha$, and using \eqref{eq:first-c}, gives
\begin{equation}\label{eq:c-alphaalpha}
 \frac{\partial^2c(m)}{\partial\alpha^2}
 =\frac2Q\left[
 -\sum_{n=0}^m\ell_n\frac{p_n^3}{\beta_n}
 +3\alpha^2\sum_{n=0}^m\ell_n\frac{p_n^5}{\beta_n^2}
 \left(1-\frac{t_n}{Q}\sum_{r=0}^m\ell_r\frac{p_r^3}{\beta_r}\right)^2
 \right].
\end{equation}

Differentiating \eqref{eq:first-p-beta} and
\eqref{eq:first-p-alpha} once more gives all second derivatives of $p_n$:
\begin{align}
 \frac{\partial^2p_n}{\partial\beta_i\partial\beta_j}
 &=-\delta_{ni}\delta_{nj}\frac{p_n}{4\beta_n^2}
 -\frac{t_np_n^3}{4\beta_n^2}
 \left(\delta_{ni}\frac{\partial c(m)}{\partial\beta_j}
       +\delta_{nj}\frac{\partial c(m)}{\partial\beta_i}\right)
 \notag\\
 &\quad+\frac{3t_n^2p_n^5}{4\beta_n^2}
 \frac{\partial c(m)}{\partial\beta_i}
 \frac{\partial c(m)}{\partial\beta_j}
 -\frac{t_np_n^3}{2\beta_n}
 \frac{\partial^2c(m)}{\partial\beta_i\partial\beta_j},
 \label{eq:p-betabeta}\\
 \frac{\partial^2p_n}{\partial\beta_i\partial\alpha}
 &=-\delta_{ni}\frac{p_n^3}{2\beta_n^2}
 \left(\alpha+\frac{t_n}{2}\frac{\partial c(m)}{\partial\alpha}\right)
 \notag\\
 &\quad+\frac{3t_np_n^5}{2\beta_n^2}
 \frac{\partial c(m)}{\partial\beta_i}
 \left(\alpha+\frac{t_n}{2}\frac{\partial c(m)}{\partial\alpha}\right)
 -\frac{t_np_n^3}{2\beta_n}
 \frac{\partial^2c(m)}{\partial\beta_i\partial\alpha},
 \label{eq:p-betaalpha}\\
 \frac{\partial^2p_n}{\partial\alpha^2}
 &=-\frac{p_n^3}{\beta_n}
 +\frac{3p_n^5}{\beta_n^2}
 \left(\alpha+\frac{t_n}{2}\frac{\partial c(m)}{\partial\alpha}\right)^2
 -\frac{t_np_n^3}{2\beta_n}\frac{\partial^2c(m)}{\partial\alpha^2}.
 \label{eq:p-alphaalpha}
\end{align}
In particular, the diagonal case of \eqref{eq:p-betabeta} reads
\begin{align}
 (\nabla^2p_n)_{ii}
 =\frac{\partial^2p_n}{\partial\beta_i^2}
 &=-\delta_{ni}\frac{p_i}{4\beta_i^2}
 \left(1+2t_ip_i^2\frac{\partial c(m)}{\partial\beta_i}\right)
 \notag\\
 &\quad+\frac{3t_n^2p_n^5}{4\beta_n^2}
 \left(\frac{\partial c(m)}{\partial\beta_i}\right)^2
 -\frac{t_np_n^3}{2\beta_n}\frac{\partial^2c(m)}{\partial\beta_i^2},
 \qquad 0\leq i\leq m.
 \label{eq:p-diagonal}
\end{align}
For $i=m+1$, $(\nabla^2p_n)_{m+1,m+1}$ is given by
\eqref{eq:p-alphaalpha}. These are exactly the probability derivatives
appearing in Theorem 5.1.

\subsubsection*{Gradient and Hessian of the objective function}

From \eqref{eq:objective},
\begin{equation}\label{eq:gradient}
 \frac{\partial g_m}{\partial p_n}
 =w_n\left(t_nV_m-\frac{\beta_nT_m}{p_n^2}\right).
\end{equation}
A further differentiation yields
\begin{equation}\label{eq:H}
 H_{nr}=2w_n\frac{\beta_nT_m}{p_n^3}\delta_{nr}
 -w_nw_r\left(\frac{t_n\beta_r}{p_r^2}
                    +\frac{t_r\beta_n}{p_n^2}\right).
\end{equation}
Consequently, for $x=(x_0,\ldots,x_m)^\top$,
\begin{equation}\label{eq:quadratic}
 x^\top Hx=2T_m\sum_{n=0}^m w_n\frac{\beta_n}{p_n^3}x_n^2
 -2\left(\sum_{n=0}^m w_n\frac{\beta_n}{p_n^2}x_n\right)
   \left(\sum_{n=0}^m w_nt_nx_n\right).
\end{equation}

\subsubsection*{Evaluation of the Hessian contribution and allocation coefficients}

For $0\leq i\leq m$, equation \eqref{eq:G-beta} gives
\begin{align*}
 \sum_{n=0}^m w_n\frac{\beta_n}{p_n^3}G_{ni}^2
 &=\frac{p_i^2}{4\beta_i^2}
 \left(\frac{w_i\beta_i}{p_i^3}-\frac{2\ell_iw_it_i}{Q}
 +\frac{\ell_i^2}{Q^2}\sum_{r=0}^m w_r\frac{t_r^2p_r^3}{\beta_r}\right),\\
 \sum_{n=0}^m w_n\frac{\beta_n}{p_n^2}G_{ni}
 &=\frac{p_i}{2\beta_i}
 \left(\frac{w_i\beta_i}{p_i^2}-\frac{\ell_iT_m}{Q}\right),\\
 \sum_{n=0}^m w_nt_nG_{ni}
 &=\frac{p_i}{2\beta_i}
 \left(w_it_i-\frac{\ell_i}{Q}\sum_{r=0}^m w_r\frac{t_r^2p_r^3}{\beta_r}\right).
\end{align*}
Substitution into \eqref{eq:quadratic}, followed by cancellation
of the common terms, gives
\begin{equation}\label{eq:GHG-beta}
 (G^\top HG)_{ii}
 =\frac{w_ip_i^2}{2\beta_i^2}
 \left[\frac{\beta_iT_m}{p_i^3}-\frac{w_i\beta_it_i}{p_i^2}
 +\frac{\ell_i}{Q}\left(
 \frac{\beta_i}{p_i^2}\sum_{n=0}^m w_n\frac{t_n^2p_n^3}{\beta_n}
 -T_mt_i\right)\right].
\end{equation}

For the parameter $\alpha$, equation \eqref{eq:G-alpha} yields
\begin{align*}
 \sum_{n=0}^m w_n\frac{\beta_n}{p_n^3}G_{n,m+1}^2
 &=\alpha^2\sum_{n=0}^m w_n\frac{p_n^3}{\beta_n}
 \left(1-\frac{t_n}{Q}\sum_{r=0}^m\ell_r\frac{p_r^3}{\beta_r}\right)^2,\\
 \sum_{n=0}^m w_n\frac{\beta_n}{p_n^2}G_{n,m+1}
 &=-\alpha\left(\sum_{n=0}^m w_np_n
       -\frac{T_m}{Q}\sum_{r=0}^m\ell_r\frac{p_r^3}{\beta_r}\right),\\
 \sum_{n=0}^m w_nt_nG_{n,m+1}
 &=-\alpha\Bigg[\sum_{n=0}^m w_n\frac{t_np_n^3}{\beta_n}-\frac1Q
 \left(\sum_{r=0}^m\ell_r\frac{p_r^3}{\beta_r}\right)
 \left(\sum_{n=0}^m w_n\frac{t_n^2p_n^3}{\beta_n}\right)\Bigg].
\end{align*}
The relation $\beta_n/p_n^2=\alpha^2+c(m)t_n$ implies
\begin{align*}
 \alpha^2\sum_{n=0}^m\ell_n\frac{p_n^3}{\beta_n}+c(m)Q&=1,\\
 \sum_{n=0}^m w_np_n
 &=\alpha^2\sum_{n=0}^m w_n\frac{p_n^3}{\beta_n}
   +c(m)\sum_{n=0}^m w_n\frac{t_np_n^3}{\beta_n},\\
 T_m&=\alpha^2\sum_{n=0}^m w_n\frac{t_np_n^3}{\beta_n}
   +c(m)\sum_{n=0}^m w_n\frac{t_n^2p_n^3}{\beta_n}.
\end{align*}
Using these identities to simplify \eqref{eq:quadratic}, we obtain
\begin{equation}\label{eq:GHG-alpha}
 (G^\top HG)_{m+1,m+1}
 =\frac{2\alpha^2}{Q}\left[
 \left(\sum_{n=0}^m w_n\frac{p_n^3}{\beta_n}\right)
 \left(\sum_{n=0}^m w_n\frac{t_n^2p_n^3}{\beta_n}\right)
 -\left(\sum_{n=0}^m w_n\frac{t_np_n^3}{\beta_n}\right)^2
 \right].
\end{equation}
Finally, by the definition of $K_i$ in the main text and
\eqref{eq:gradient},
\begin{align}
 K_i&=(G^\top HG)_{ii}
       +\sum_{n=0}^m\frac{\partial g_m(p(m))}{\partial p_n}
                         (\nabla^2p_n)_{ii}\notag\\
 &=(G^\top HG)_{ii}
       +\sum_{n=0}^m w_n\left(t_nV_m-\frac{\beta_nT_m}{p_n^2}\right)
                         (\nabla^2p_n)_{ii},
 \qquad 0\leq i\leq m+1.
 \label{eq:K}
\end{align}
Together with \eqref{eq:p-alphaalpha}, \eqref{eq:p-diagonal}, 
\eqref{eq:GHG-beta}, and \eqref{eq:GHG-alpha}, this proves Theorem 5.1.
\end{proof}

	\section*{Appendix B}
	\subsection*{Second-order derivative bounds for Lemma 4.3}
\label{sec:derivative-bounds}

We now use the preceding identities in Appendix A to derive the uniform bounds required
in Lemma 4.3. Recall that $t_n=2^n$, $p>1/2$,
$\sigma_n^2=\operatorname{Var}(\Delta_n^2)$, and
$\sigma_{m+1}=\sigma_\alpha$, where
$\sigma_\alpha^2=\operatorname{Var}(Y)$.
All constants denoted by $C$ are independent of $m$ and $N$.

With $\theta_{m+1}=\alpha$ and $\sigma_{m+1}=\sigma_\alpha$, define
\begin{equation}\label{eq:neighborhood}
 U_i=\left[\theta_i-\frac{\sigma_iB_N}{\sqrt N},
           \theta_i+\frac{\sigma_iB_N}{\sqrt N}\right],
 \quad 0\leq i\leq m+1,\qquad
 U=U_0\times\cdots\times U_{m+1}.
\end{equation}
We write $\widehat\theta=(\widehat\beta_0,\ldots,
\widehat\beta_m,\widehat\alpha)^\top$ for a generic point of $U$,
including any intermediate point between the true and estimated parameters.
The functions $p_n$ and $c(m)$ in the derivative identities above are now
evaluated at $\widehat\theta$, and the differentiation coordinates are
$\widehat\beta_i$ and $\widehat\alpha$. Thus those identities apply with
$\beta_i$ and $\alpha$ replaced by $\widehat\beta_i$ and
$\widehat\alpha$, respectively. The quantities $\sigma_i$ remain the
standard deviations associated with the true parameters.

Under the assumptions of Lemma 4.3,
$\beta_n\asymp2^{-2pn}$, $\sup_n\sigma_n/\beta_n<\infty$, and
$B_N/\sqrt N\to0$. The deterministic estimates in Lemma 4.2 therefore
hold uniformly over $\widehat\theta\in U$ for all sufficiently large $N$:
\begin{equation}\label{eq:uniform-basic}
 \widehat\beta_n\asymp2^{-2pn},\qquad
 p_n(\widehat\theta)\asymp2^{-(p+1/2)n},\qquad
 \widehat\alpha^2+c(m)t_n
       =\frac{\widehat\beta_n}{p_n(\widehat\theta)^2}\asymp2^n.
\end{equation}
Here $c(m)$ is evaluated at $\widehat\theta$. In the estimates below,
we suppress the argument $\widehat\theta$ in $p_n$ and $Q$.
Every pointwise bound is uniform over $U$, with constants independent
also of the level indices. In particular,
\begin{equation}\label{eq:building-blocks}
 \frac{p_n}{\widehat\beta_n}\leq C2^{(p-1/2)n},\quad
 \frac{p_n}{\widehat\beta_n^2}\leq C2^{(3p-1/2)n},\quad
 \frac{p_n^3}{\widehat\beta_n}\leq C2^{-(p+3/2)n},\quad
 \frac{p_n^5}{\widehat\beta_n^2}\leq C2^{-(p+5/2)n}.
\end{equation}
The $n=0$ term gives a positive lower bound for $Q$, while
\eqref{eq:uniform-basic} gives the upper bounds
\begin{equation}\label{eq:uniform-sums}
 C^{-1}\leq Q=\sum_{n=0}^m\ell_n
                       \frac{t_n p_n^3}{\widehat\beta_n}\leq C,
 \qquad
 \sum_{n=0}^m\ell_n\frac{p_n^3}{\widehat\beta_n}\leq C,
 \qquad
 \sum_{n=0}^m\ell_n\frac{t_n^2p_n^5}{\widehat\beta_n^2}\leq C.
\end{equation}

\subsubsection*{Derivative bounds for the normalization constant}

Equations \eqref{eq:first-c} and \eqref{eq:uniform-sums} imply
\begin{equation}\label{eq:c-first-bounds}
 \left|\frac{\partial c(m)}{\partial\widehat\beta_i}\right|
 \leq C2^{(p-1/2)i},\qquad
 \left|\frac{\partial c(m)}{\partial\widehat\alpha}\right|\leq C.
\end{equation}
Moreover,
\[
 \frac{t_ip_i^2}{\widehat\beta_i}\leq C,\qquad
 \left|\widehat\alpha+\frac{t_i}{2}
                   \frac{\partial c(m)}{\partial\widehat\alpha}\right|
 \leq C2^i.
\]
It follows from \eqref{eq:c-betabeta} that
\begin{align}
 \left|\frac{\partial^2c(m)}
       {\partial\widehat\beta_i\partial\widehat\beta_j}\right|
 &\leq C2^{(p-1/2)i+(p-1/2)j},\qquad i\ne j,
 \label{eq:c-mixed-bound}\\
 \left|\frac{\partial^2c(m)}{\partial\widehat\beta_i^2}\right|
 &\leq C\left(2^{(3p-1/2)i}+2^{(2p-1)i}\right)
 \leq C2^{(3p-1/2)i}.
 \label{eq:c-diagonal-bound}
\end{align}
The two terms in the brackets in \eqref{eq:c-betaalpha} are uniformly
bounded, since
\[
 \frac{p_i^2}{\widehat\beta_i}
 \left|\widehat\alpha+\frac{t_i}{2}
              \frac{\partial c(m)}{\partial\widehat\alpha}\right|\leq C
\]
and
\[
 \sum_{n=0}^m\ell_n\frac{t_np_n^5}{\widehat\beta_n^2}
 \left|\widehat\alpha+\frac{t_n}{2}
              \frac{\partial c(m)}{\partial\widehat\alpha}\right|
 \leq C\sum_{n=0}^m2^{-(p+1/2)n}\leq C.
\]
Likewise,
\[
 \sum_{n=0}^m\ell_n\frac{p_n^5}{\widehat\beta_n^2}
 \left(\widehat\alpha+\frac{t_n}{2}
              \frac{\partial c(m)}{\partial\widehat\alpha}\right)^2
 \leq C.
\]
Using \eqref{eq:c-betaalpha} and the twice-differentiated normalization
condition underlying \eqref{eq:c-alphaalpha}, we obtain
\begin{equation}\label{eq:c-alpha-bounds}
 \left|\frac{\partial^2c(m)}
       {\partial\widehat\beta_i\partial\widehat\alpha}\right|
 \leq C2^{(p-1/2)i},\qquad
 \left|\frac{\partial^2c(m)}{\partial\widehat\alpha^2}\right|\leq C.
\end{equation}

\subsubsection*{Derivative bounds for the probabilities}

The first-order identities \eqref{eq:first-p-beta} and \eqref{eq:first-p-alpha} give the bounds used in Lemma 4.3:
\begin{align*}
 \sup_{\widehat\theta\in U}
 \left|\frac{\partial p_n}{\partial\widehat\alpha}\right|
 &\leq C2^{-(p+1/2)n},\qquad
 \sup_{\widehat\theta\in U}
 \left|\frac{\partial p_n}{\partial\widehat\beta_n}\right|
 \leq C2^{(p-1/2)n},\notag\\
 \sup_{\widehat\theta\in U}
 \left|\frac{\partial p_n}{\partial\widehat\beta_k}\right|
 &\leq C2^{-(p+1/2)n+(p-1/2)k},\qquad k\ne n.
\end{align*}
Substituting \eqref{eq:building-blocks}--\eqref{eq:c-alpha-bounds}
into \eqref{eq:p-betabeta}--\eqref{eq:p-alphaalpha} gives
\begin{align*}
 \sup_{\widehat\theta\in U}
 \left|\frac{\partial^2p_n}{\partial\widehat\alpha^2}\right|
 &\leq C2^{-(p+1/2)n},\\
 \sup_{\widehat\theta\in U}
 \left|\frac{\partial^2p_n}
 {\partial\widehat\beta_n\partial\widehat\alpha}\right|
 &\leq C2^{(p-1/2)n},\\
 \sup_{\widehat\theta\in U}
 \left|\frac{\partial^2p_n}
 {\partial\widehat\beta_k\partial\widehat\alpha}\right|
 &\leq C2^{-(p+1/2)n+(p-1/2)k},\qquad k\ne n,
\\
 \sup_{\widehat\theta\in U}
 \left|\frac{\partial^2p_n}{\partial\widehat\beta_n^2}\right|
 &\leq C2^{(3p-1/2)n},\\
 \sup_{\widehat\theta\in U}
 \left|\frac{\partial^2p_n}
 {\partial\widehat\beta_n\partial\widehat\beta_k}\right|
 &\leq C2^{(p-1/2)n+(p-1/2)k},\qquad k\ne n,\\
 \sup_{\widehat\theta\in U}
 \left|\frac{\partial^2p_n}{\partial\widehat\beta_k^2}\right|
 &\leq C2^{-(p+1/2)n+(3p-1/2)k},\qquad k\ne n,
\end{align*}
and
\[
 \sup_{\widehat\theta\in U}
 \left|\frac{\partial^2p_n}
 {\partial\widehat\beta_k\partial\widehat\beta_j}\right|\leq C2^{-(p+1/2)n+(p-1/2)k+(p-1/2)j}, \quad k,j\ne n,\quad k\ne j.
\]
This establishes all seven second-order derivative bounds.

	\bibliographystyle{plainnat}
	\bibliography{reference}
\end{document}